\documentclass[12pt]{amsart}
\usepackage{amsfonts,amssymb,amsmath,amsthm}

\usepackage[colorlinks, linkcolor=blue, citecolor=blue, urlcolor=blue,
pagebackref,hypertexnames=false]{hyperref}
\usepackage{enumerate}
\usepackage{geometry}
\usepackage{txfonts}

\allowdisplaybreaks
\numberwithin{equation}{section}
\theoremstyle{plain}
\newtheorem{theorem}{Theorem}[section]
\newtheorem{proposition}[theorem]{Proposition}
\newtheorem{lemma}[theorem]{Lemma}
\newtheorem{corollary}[theorem]{Corollary}

\newcommand{\calC}{\mathcal C}
\newcommand{\calO}{\mathcal O}
\newcommand{\one}{\mathbf 1}
\newcommand{\eps}{\varepsilon}
\newcommand{\Real}{\operatorname{Re}}

\begin{document}

\title[Fefferman--Stein inequality in the Dunkl setting]
{A Fefferman--Stein inequality for the Dunkl Poisson semigroup\\ and its chamber-lifted formulation}

\author[Y. Chen]{Yuying Chen}
\address{School of Mathematical Sciences, South China Normal University, Guangzhou, 510631, P.R. China}
\email{2022021927@m.scnu.edu.cn}

\author[Ya. Han]{Yanchang Han}
\address{School of Mathematical Sciences, South China Normal University, Guangzhou, 510631, P.R. China}
\email{hanych@scnu.edu.cn}

\author[Yo. Han]{Yongsheng Han}
\address{Department of Mathematics, Auburn University, AL 36849-5310, USA}
\email{hanyong@auburn.edu}

\author[J. Li]{Ji Li}
\address{School of Mathematical and Physical Sciences, Macquarie University, NSW 2109, Australia}
\email{ji.li@mq.edu.au}

\author[L. Wu]{Liangchuan Wu}
\address{School of Mathematical Sciences, Anhui University, Hefei, 230601, P.R. China}
\email{wuliangchuan@ahu.edu.cn}

\begin{abstract}
We prove a Fefferman--Stein good-$\lambda$ inequality for the Dunkl Poisson semigroup associated with a finite reflection
group and a non-negative multiplicity function.  For arbitrary complex-valued $f\in C_c^\infty(\mathbb R^N)$, with no
$G$-invariance assumption, it compares the orbit-conical non-tangential maximal function $\mathcal N_P^\beta f$ with
the area function $\mathcal S_Pf$ formed from the full space-time Dunkl carr\'e du champ, including its
reflection-difference energy.  The main obstruction is that a general cut-off creates wall differences not controlled
by the local Euclidean-gradient product identity.  The good set
$E=\{x:\mathcal N_P^\beta f(x)\le\lambda\}$ is $G$-invariant; by the equivariance of the Poisson semigroup, so is
$a=\varphi(P_t\one_E)$, and hence all reflection differences of the cut-off vanish.  Poisson maximal and tail estimates,
together with the $L^2$ Littlewood--Paley estimate for $P_t\one_{E^c}$, then yield the desired distribution inequality.
Its integrated form gives maximal-to-area estimates for every $0<p<2$ and endpoint $H^1$-to-$L^1$ bounds for the
orbit-conical and Euclidean-conical intrinsic area functions.  For chamber lifts of globally smooth data, the inequality
has an equivalent formulation on a fundamental chamber, where orbit cones become Euclidean cones and the reflection
energy becomes a finite wall coupling.  Combined with the known semigroup square-function characterization, these
bounds characterize the Dunkl Poisson maximal Hardy space among $L^1(d\omega)$ data.
\end{abstract}

\subjclass[2020]{42B25, 42B30, 42B35, 33C52}
\keywords{Dunkl operators, Fefferman--Stein inequality, square functions, non-tangential maximal functions, chamber lifting,
reflection groups}

\maketitle

\section{Introduction}

In the classical upper half-space, the non-tangential maximal function $u^*$ and the Lusin area integral $S(u)$
satisfy a distribution estimate of the form
$$
    \left|\{x:S(u)(x)>\lambda\}\right|  \le C\left|\{x:u^*(x)>\lambda\}\right|
    +\frac{C}{\lambda^2}\int_0^\lambda s\left|\{x:u^*(x)>s\}\right|\,ds.
$$
This theorem of Fefferman and Stein \cite{FS} underlies maximal-to-area estimates and Hardy-space characterizations;
see also Stein \cite{S}.  Merryfield \cite{M} obtained the same distribution estimate without the surface
approximation used in \cite{FS}.  His argument constructs the Littlewood--Paley test function from the
maximal-function test function through a Cauchy--Riemann system.  J. Li \cite{Li} later established the corresponding
estimate on general Shilov boundaries through a Poisson-extension and bootstrapping argument that avoids this
auxiliary-function construction.

Rational Dunkl analysis couples Euclidean differentiation with the action of a finite reflection group.
The associated measure is doubling, but the Poisson kernel is localized by the orbit distance, whereas the
Dirichlet energy contains differences across reflecting walls.  This mixed local--nonlocal structure prevents a
direct transfer of the classical maximal-to-area argument to the Dunkl setting.  The underlying operators,
semigroups, transform theory, singular integrals, multiplier estimates, and
Hardy spaces have been studied extensively; see
\cite{D1,D2,D3,RM,ADH,Bui,DH1,DH2,DH3,DHFSBMO,THHLL1,THHLL2}.

For the notation in the main theorem, let $R$ be a reduced root system, $G$ its reflection group, $\kappa\ge0$ a
$G$-invariant multiplicity function, $d\omega$ the associated Dunkl measure, and $\Delta_\kappa$ the Dunkl Laplacian.
Set $P_t=e^{-t\sqrt{-\Delta_\kappa}}$, fix a positive subsystem $R_+$, and denote by $\sigma_\alpha$ the reflection
across $\alpha^\perp$.  Write $\langle\cdot,\cdot\rangle$ and $\|\cdot\|$ for the Euclidean inner product and norm,
respectively.  For $x,y\in\mathbb R^N$, set
$$
        d(x,y):=\min_{\sigma\in G}\|x-\sigma y\|.
$$
For $t>0$, write
$$
        V(x,t):=\omega(B(x,t)),
$$
where $B(x,t)$ is the Euclidean ball.

Several neighboring square-function theories are available.  The conical square function in \cite{ADH} is formed
from the semigroup derivative $Q_t=t\sqrt{-\Delta_\kappa}e^{-t\sqrt{-\Delta_\kappa}}$, while the Lusin-area
characterizations in \cite{CHHT} arise from a semi-discrete reproducing formula.  Vertical square functions
containing the spatial Dunkl carr\'e du champ were treated in \cite{LiZhao,DHLP}, and related heat-semigroup
operators were studied in \cite{ABFR,LiWeak}.  Deleaval's Fefferman--Stein inequality for the
$\mathbb Z_2^d$ Dunkl maximal operator \cite{Deleaval} concerns a different maximal-operator problem.

Area integrals defined through Dunkl generalized translations were studied locally by J. Jiu and Z. Li
\cite{JiuLiLocal}.  Jiu \cite{Jiu} proved the global maximal-to-area estimate for arbitrary $\kappa$-harmonic
functions.  Under a vanishing-at-infinity assumption, the reverse estimate holds for $G$-invariant functions
associated with an arbitrary finite reflection group, and for arbitrary functions when $G=\mathbb Z_2^d$.
These functionals differ from the continuous orbit-conical integral of the pointwise space-time carr\'e du champ
considered here and therefore do not yield the distribution estimate below for the full intrinsic energy and
arbitrary complex, non-$G$-invariant data.

The obstruction appears already in the product rule.  Let $u=P_tf$, and let $a$ be a cut-off.  For the reflection
corresponding to $\alpha$,
$$
    a(x)u(x)-a(\sigma_\alpha x)u(\sigma_\alpha x)  =a(x)\bigl(u(x)-u(\sigma_\alpha x)\bigr)
    +u(\sigma_\alpha x)\bigl(a(x)-a(\sigma_\alpha x)\bigr).
$$
The last term is a wall difference created by the cut-off.  It is not controlled by the local Euclidean-gradient term
in the classical product identity.  Hence this direct localization does not close unless the cut-off has vanishing
reflection differences.

For $u(y,t)=P_tf(y)$, the intrinsic energy density is
$$
    \Gamma_{\kappa,t}(u)(y,t)  =|\partial_tu(y,t)|^2+|\nabla_yu(y,t)|^2
    +\sum_{\alpha\in R_+}\kappa(\alpha)   \frac{|u(y,t)-u(\sigma_\alpha y,t)|^2}{\langle\alpha,y\rangle^2}.
$$
The quotient is interpreted by its continuous extension on the reflecting hyperplanes.  Define
$$
    \mathcal N_P^\beta f(x)=\sup_{d(x,y)<\beta t}|u(y,t)|,
$$
and
$$
    \mathcal S_Pf(x)   =\left(   \int_0^\infty\int_{d(x,y)<t}   \Gamma_{\kappa,t}(u)(y,t)\,
    \frac{t\,dt\,d\omega(y)}{V(x,t)}    \right)^{1/2}.
$$

\begin{theorem}\label{thm:main}
There exist a structural aperture $\beta>1$ and a constant $C>0$ such that, for every complex-valued
$f\in C_c^\infty(\mathbb R^N)$ and every $\lambda>0$,
\begin{align}\label{eq:main}
    \omega\{x\in\mathbb R^N:\mathcal S_Pf(x)>\lambda\}
     &\le C\,\omega\{x\in\mathbb R^N:\mathcal N_P^\beta f(x)>\lambda\}  +\frac{C}{\lambda^2}
    \int_{\{\mathcal N_P^\beta f\le\lambda\}}   \bigl(\mathcal N_P^\beta f(x)\bigr)^2\,d\omega(x).
\end{align}
The constants depend only on $N$, the root system, the multiplicity function, and the doubling constants of
$(\mathbb R^N,d\omega)$; no invariance of $f$ under the reflection group is assumed.
\end{theorem}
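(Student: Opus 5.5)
We follow the scheme announced in the abstract: localize the area integral to a $G$-invariant region built from the good set, and integrate the energy by parts against a $G$-invariant cut-off, so that the wall differences in the product rule never appear. Fix $\lambda>0$ and put $E=\{x:\mathcal N_P^\beta f(x)\le\lambda\}$. Since $d(\sigma x,y)=d(x,y)$ for $\sigma\in G$, we have $\mathcal N_P^\beta f(\sigma x)=\mathcal N_P^\beta f(x)$, so $E$ is $G$-invariant. Because $P_t$ commutes with the $G$-action, $P_t\one_E$ is $G$-invariant in $y$ for every $t$. Choose a smooth $\varphi$ with $\varphi=1$ on $[1-\eta,1]$ and $\varphi=0$ on $[0,1-2\eta]$, and set $a(y,t)=\varphi(P_t\one_E(y))$. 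Then $a(\sigma_\alpha y,t)=a(y,t)$, so every reflection difference of $a$ vanishes.

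By Chebyshev,
\[
\omega\{\mathcal S_Pf>\lambda\}\le \omega(E^c)+\omega\bigl(E\cap\{\mathcal S_Pf>\lambda\}\bigr)\le\omega(E^c)+\lambda^{-2}\int_E(\mathcal S_Pf)^2\,d\omega .
\]
Fubini and doubling give $\int_E(\mathcal S_Pf)^2\,d\omega\approx\iint_{R}\Gamma_{\kappa,t}(u)\,t\,dt\,d\omega(y)$, where $R=\{(y,t):\omega(B(y,t)\cap \calO(E))\gtrsim V(y,t)\}$ and $\calO(E)=E$ is the orbit of $E$. We use the Poisson maximal/lower bound: on the relevant part of $R$, $P_t\one_E(y)\ge1-\eta$ after replacing $E$ by the set $E^*$ of points of density. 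The usual reduction handles $\omega(E\setminus E^*)\lesssim\omega(E^c)$ via the Dunkl Hardy--Littlewood maximal theorem. Hence $a=1$ on $R$. Conversely, on the support of $a$, the point $(y,t)$ is within orbit distance $\beta t$ of $E$, by the Poisson tail estimate with large aperture $\beta$. There $|u|\le\lambda$.

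Next we compute
\[
\iint a^2\,\Gamma_{\kappa,t}(u)\,t\,dt\,d\omega=\iint a^2\Bigl(\tfrac12(\partial_t^2+\Delta_\kappa)|u|^2\Bigr)t\,dt\,d\omega .
\]
This uses $(\partial_t^2+\Delta_\kappa)u=0$ and the Dunkl identity
\[
\Delta_\kappa(|u|^2)=2\Real(\bar u\Delta_\kappa u)+2|\nabla u|^2+2\sum\kappa(\alpha)\frac{|u-u\circ\sigma_\alpha|^2}{\langle\alpha,y\rangle^2},
\]
which is exact for complex $u$. Integrating by parts twice moves $\partial_t^2+\Delta_\kappa$ onto $a^2t$. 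The key point is that $\Delta_\kappa$ is symmetric in $L^2(d\omega)$, and for $G$-invariant $a^2$ the expression $\Delta_\kappa(a^2)$ reduces to the Euclidean part plus the first-order drift $\sum\kappa(\alpha)\langle\nabla a^2,\alpha\rangle/\langle\alpha,y\rangle$, with no difference terms. The product-rule obstruction therefore disappears. Boundary terms at $t=0$ and $t=\infty$ are controlled by $|u|\le\lambda$ on $\operatorname{supp}a$, together with compact support and smoothness of $f$, after a standard truncation $\eps<t<1/\eps$. The remaining terms are bounded by
\[
\lambda^2\iint\bigl(|\nabla_{y,t}a|^2+|a|\,|\nabla_{y,t}^2 a|\bigr)\,t\,dt\,d\omega+\text{(absorbable)}.
\]
Cross terms $a\,\nabla a\,\bar u\nabla u$ are handled by Cauchy--Schwarz, absorbing $\tfrac12\iint a^2|\nabla u|^2 t$. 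The wall drift terms are handled in the same way, since $a$ is invariant.

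Finally, $\nabla a=\varphi'(\cdot)\,\nabla P_t\one_E=-\varphi'\,\nabla P_t\one_{E^c}$. The $L^2$ Littlewood--Paley estimate for $w=P_t\one_{E^c}$ gives
\[
\iint|\nabla_{y,t}w|^2\,t\,dt\,d\omega\lesssim\|\one_{E^c}\|_2^2=\omega(E^c).
\]
The second-derivative terms are converted into first-derivative terms by one more integration by parts, or they are handled by the harmonicity of $w$ and the same bound. We obtain
\[
\int_E(\mathcal S_Pf)^2\,d\omega\lesssim\lambda^2\omega(E^c)+\int_{\operatorname{supp}a}|u|^2\ldots .
\]
The residual interior term is at most $\int_E(\mathcal N_P^\beta f)^2\,d\omega$, using $|u(y,t)|\le\mathcal N_P^\beta f(x)$ for $x\in E$ near $y$, together with doubling. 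Dividing by $\lambda^2$ yields \eqref{eq:main}.

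The main obstacle is the support geometry. We must show that $\operatorname{supp}a$ lies inside the orbit cone region over $E$ with aperture $\beta$, and that $a\equiv1$ on the relevant cone region. Because the Dunkl Poisson kernel is not localized in Euclidean distance, this requires the orbit-distance Poisson tail estimates; it also fixes $\beta$ structurally, via $|G|$ and the doubling constants. The alternative is a careful treatment of the boundary term at $t\to0$, where one uses $P_t\one_E\to\one_E$ only almost everywhere.
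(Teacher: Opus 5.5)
The gap is in the localized energy step, in how you handle the weight $t$. Since $\Gamma_{\kappa,t}(u)=\tfrac12\Delta_{\kappa,t}|u|^2$ and $\partial_t^2(t\,a^2)=t\,\partial_t^2(a^2)+2\,\partial_t(a^2)$, moving $\partial_t^2+\Delta_\kappa$ entirely onto $t\,a^2$ gives, formally,
\begin{align*}
\int_0^\infty\!\!\int_{\mathbb R^N}a^2\,\Gamma_{\kappa,t}(u)\,t\,d\omega\,dt
&=\tfrac12\int_E|f|^2\,d\omega
+\tfrac12\int_0^\infty\!\!\int_{\mathbb R^N}t\,|u|^2\,\Delta_{\kappa,t}(a^2)\,d\omega\,dt\\
&\quad+\int_0^\infty\!\!\int_{\mathbb R^N}|u|^2\,\partial_t(a^2)\,d\omega\,dt .
\end{align*}
The middle term is fine. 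By harmonicity of $v=P_t\one_E$ and the invariant chain rule, $\Delta_{\kappa,t}(a^2)=(\varphi^2)''(v)\,|\nabla_{y,t}v|^2$.

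The last term has no factor $t$, and it is missing from your list of remaining terms. Using $|u|\le\lambda$ on its support, you can only bound it by $2\lambda^2\int_0^\infty\!\int|\varphi'(v)|\,|\partial_tv|\,d\omega\,dt$. That is the total variation of $t\mapsto\varphi(v(y,t))$ along vertical lines. The Littlewood--Paley estimate controls only $\int_0^\infty\!\int|\nabla_{y,t}v|^2\,t\,d\omega\,dt$. Going through Cauchy--Schwarz would require $\int_0^\infty\!\int_{\{\varphi'(v)\neq0\}}t^{-1}\,d\omega\,dt\lesssim\omega(E^c)$, and this fails in general. For example, with $\kappa=0$, let $E^c$ have a suitable fixed intermediate density at every scale between $2^{-K}$ and $1$ on a unit cube. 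Then this integral is of order $K$ while $\omega(E^c)\simeq1$. Integrating by parts in $t$ once more only trades the term for $\int_0^\infty\!\int a^2\,\partial_t|u|^2\,d\omega\,dt$, which is equally unweighted. In the classical Green's-formula proof this is the harmless boundary term $\int_{\partial\mathcal R}|u|^2\,\partial_n t\,d\sigma$ over a Lipschitz graph. For the Poisson cut-off $\varphi(P_t\one_E)$, nothing in your argument controls it.

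The repair, which is the paper's route, is to keep the full operator on $|au|^2$ and handle the weight $t$ by an exact one-dimensional identity.
\begin{itemize}
\item Put $h=au$. Invariance of $a$ gives $a^2\Gamma_{\kappa,t}(u)\le2\Gamma_{\kappa,t}(h)+2|u|^2|\nabla_{y,t}a|^2$.
\item Write $\Gamma_{\kappa,t}(h)=\tfrac12\Delta_{\kappa,t}|h|^2-\Real(\bar h\,\Delta_{\kappa,t}h)$, where $\Delta_{\kappa,t}h=u\,\varphi''(v)|\nabla_{y,t}v|^2+2\varphi'(v)\,\nabla_{y,t}u\cdot\nabla_{y,t}v$.
\item The spatial part of $\int_0^\infty\!\int t\,\Delta_{\kappa,t}|h|^2\,d\omega\,dt$ vanishes.
\item For $H(t)=\|h(\cdot,t)\|_2^2$, one has exactly $\int_0^\infty tH''(t)\,dt=[tH'-H]_0^\infty=H(0+)=\int_E|f|^2\,d\omega$.
\end{itemize}
Every error term then carries the weight $t$. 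Each is either absorbed by Young's inequality or bounded by $\lambda^2\omega(E^c)$ via Littlewood--Paley. The trace computation is done first for smooth invariant data $b_0\approx\one_{E^c}$ and then passed to the limit.

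This also fixes two further points in your write-up.
\begin{itemize}
\item The term $\int_E(\mathcal N_P^\beta f)^2\,d\omega$ in \eqref{eq:main} enters only through this trace together with $|f|\le\mathcal N_P^\beta f$ a.e. Bounding the $t=0$ boundary term by $|u|\le\lambda$ would give $\lambda^2\omega(E)=\infty$.
\item Getting $a=1$ on the relevant tent requires a quantitative thick set such as $\{\mathcal M_{\calO}(\one_{E^c})\le c\}$, built from orbit balls and combined with the Poisson maximal bound. Density of $E$ in the single ball $B(y,t)$, as in your set $R$, does not give $P_t\one_E(y)\ge1-\eta$.
\end{itemize}
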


The elementary truncation identity for distribution functions shows that \eqref{eq:main} is equivalent, after a
change in $C$, to
\begin{align}\label{eq:layer}
    \omega\{x\in\mathbb R^N:\mathcal S_Pf(x)>\lambda\}
    &\le C\,\omega\{x\in\mathbb R^N:\mathcal N_P^\beta f(x)>\lambda\}  +\frac{C}{\lambda^2}\int_0^\lambda
    s\,\omega\{x\in\mathbb R^N:\mathcal N_P^\beta f(x)>s\}\,ds.
\end{align}
Integrating \eqref{eq:layer}, we obtain
$$
    \|\mathcal S_Pf\|_{L^p(d\omega)}
    \le C_p\|\mathcal N_P^\beta f\|_{L^p(d\omega)},
    \qquad 0<p<2.
$$
The smoothness assumption justifies the localized energy identity and its boundary traces.  It is removed at the
Hardy-space endpoint by density and lower semicontinuity; the distribution estimate itself is stated only for smooth
compactly supported boundary data.

The good set itself supplies the invariant cut-off.  Although $f$ need not be $G$-invariant,
$\mathcal N_P^\beta f$ is.  Hence
\begin{equation*}
        E=\{x:\mathcal N_P^\beta f(x)\le\lambda\},   \qquad    v=P_t\one_E,
        \qquad    a=\varphi(v)
\end{equation*}
are $G$-invariant in the spatial variable.  In particular,
\begin{equation*}
        a(y,t)-a(\sigma_\alpha y,t)=0,    \qquad \alpha\in R,
\end{equation*}
and the product rule for $au$, $u=P_tf$, contains no additional reflection term.  The Poisson maximal and tail
estimates make $a$ equal to one over the thick good-set tent and confine its derivatives to the enlarged tent over
$E$.  The remaining error is bounded by the $L^2$ Littlewood--Paley estimate for $P_t\one_{E^c}$.

The chamberwise decomposition used to prove the sharp $L^p$ bounds for Dunkl area integrals in \cite{DunklArea}
is the starting point of the formulation below.  In \cite{HanLeeLiSawyerWuCalderon}, it was developed into the
finite-coordinate chamber lifting: all reflected values are retained as separate coordinates on one fundamental
chamber, so the orbit diagonal becomes the ordinary diagonal and a full-space operator becomes a finite matrix.
The same lift was later used for the $L^2$ two-weight testing problem for Dunkl--Poisson integrals in \cite{GLWW}.
For non-radial Dunkl multipliers, the same lifting yields a finite-matrix chamber criterion; for $A_1^N$,
a Walsh--Bessel argument verifies this criterion for wall-separated symbol pieces in \cite{CLWW}.
The common step is geometric, whereas the analytic argument remains problem-dependent.  Here the lifting enters only
after the full-space good-$\lambda$ inequality has been proved; it identifies orbit cones with Euclidean cones on
chamber representatives and rewrites the reflection energy as a finite wall coupling.

For the present formulation, fix a closed fundamental chamber $\calC$ and enumerate
$G=\{\sigma_1,\ldots,\sigma_m\}$.  The lift
$$
    Uf(x)=\left(f(\sigma_1x),\ldots,f(\sigma_mx)\right),   \qquad x\in\calC,
$$
retains the orbit values as distinct fiber coordinates and preserves $L^p$ norms.  For $x,y\in\calC$,
Lemma~\ref{lem:close} states that $d(\sigma_\rho x,\sigma_\tau y)=\|x-y\|$, while the reflection energy becomes a finite
wall coupling between the coordinates.  Thus Theorem~\ref{thm:ch} is equivalent to Theorem~\ref{thm:main}, up to
structural constants, on the lifted range $F=Uf$ with $f\in C_c^\infty(\mathbb R^N)$.  No theorem is asserted for
chamberwise smooth vector data that lack the cross-wall compatibility of a globally smooth function.

Our second main result is the maximal-to-area estimate at the Hardy-space endpoint for the intrinsic area
functionals themselves.  Let $S_{P,\mathrm{euc}}$ be the same intrinsic energy integrated over the Euclidean cone
$\|x-y\|<t$.  As proved in Theorem~\ref{thm:h1},
$$
    \|\mathcal S_Pf\|_{L^1(d\omega)}   +\|S_{P,\mathrm{euc}}f\|_{L^1(d\omega)}
    \le C\|f\|_{H^1_{\mathrm{max},P}},   \qquad f\in H^1_{\mathrm{max},P}.
$$
Here $H^1_{\mathrm{max},P}$ denotes the Poisson maximal Hardy space.
Let $S_Q$ denote the Euclidean-conical square function associated with
$Q_t=t\sqrt{-\Delta_\kappa}\,P_t$.  For $f\in L^1(d\omega)$, the pointwise chain
$S_Qf\le S_{P,\mathrm{euc}}f\le\mathcal S_Pf$ and the semigroup square-function theorem in \cite{ADH} imply
$$
   f\in H^1_{\mathrm{max},P}  \quad\Longleftrightarrow\quad
    S_{P,\mathrm{euc}}f\in L^1(d\omega)   \quad\Longleftrightarrow\quad
     \mathcal S_Pf\in L^1(d\omega),
$$
with equivalence of the three norms.  The present argument supplies the maximal-to-area implication; the reverse
norm bound uses the known $S_Q$ characterization, and no reverse distribution-function estimate is asserted.

This paper is organised as follows.  Sections~\ref{sec:pre} and~\ref{sec:pois} develop the Dunkl energy and
the orbit-geometric semigroup estimates.  Sections~\ref{sec:loc} and~\ref{sec:eng} construct the invariant
cut-off and prove the good-$\lambda$ inequality.
Section~\ref{sec:ch} presents the chamber formulation, and Section~\ref{sec:euc} passes from orbit cones to
Euclidean cones.  Section~\ref{sec:h1} proves the Hardy-space endpoint theorem and records the $L^p$ consequence.

\section{The Dunkl differential structure}\label{sec:pre}

We first fix the reflection data and then establish the carr\'e du champ identity and the invariant product and chain
rules used below.

\subsection{Root systems, reflections, and the Dunkl measure}

Let $R \subset \mathbb{R}^N \setminus \{0\}$ be a reduced root system.  We normalize
$$
        \|\alpha\|^2 = 2, \qquad \alpha \in R,
$$
and all structural constants below correspond to this normalization.
For each $\alpha \in R$, let $\sigma_\alpha$ be the reflection across the hyperplane $\alpha^\perp$, defined by
\begin{equation}\label{eq:refl}
        \sigma_\alpha x = x - 2\frac{\langle x, \alpha \rangle}{\|\alpha\|^2}\alpha.
\end{equation}
Let $G$ be the finite reflection group generated by $\{\sigma_\alpha : \alpha \in R\}$.
We fix a positive subsystem $R_+$, and let $\kappa : R \to [0, \infty)$ be a $G$-invariant multiplicity function.
The associated Dunkl weight and measure are
$$
        w(x) = \prod_{\alpha \in R_+} |\langle x, \alpha \rangle|^{2\kappa(\alpha)},
        \qquad d\omega(x) = w(x) \, dx.
$$
Denote the union of the reflecting hyperplanes by
$$
        \mathcal W:=\bigcup_{\alpha\in R}\alpha^\perp.
$$
Since $R$ is finite and $d\omega$ is absolutely continuous with respect to Lebesgue measure,
$\omega(\mathcal W)=0$.  Moreover, $w>0$ on $\mathbb R^N\setminus\mathcal W$, so $d\omega$ has full support.
Equivalently,
\begin{equation*}
        w(x) = \prod_{\alpha \in R} |\langle x, \alpha \rangle|^{\kappa(\alpha)},
\end{equation*}
since roots occur in pairs $\{\alpha, -\alpha\}$ with $\kappa(-\alpha) = \kappa(\alpha)$.
Geometrically, $|\langle x,\alpha\rangle|$ is the unnormalized normal coordinate of $x$
relative to the wall $\alpha^\perp$; more precisely,
$$
        \mathrm{dist}(x, \alpha^\perp) = \frac{|\langle x, \alpha \rangle|}{\|\alpha\|}.
$$
This convention fixes the normalization used in the wall terms below.
Throughout the paper, $B(x,r)$ denotes the Euclidean ball centered at $x$ with radius $r$.

For $\sigma\in G$, the $G$-invariance of $\kappa$ implies
\begin{align*}
    w(\sigma x)   &=\prod_{\alpha\in R}|\langle x,\sigma^{-1}\alpha\rangle|^{\kappa(\alpha)}
    =\prod_{\beta\in R}|\langle x,\beta\rangle|^{\kappa(\sigma\beta)}   =w(x).
\end{align*}
Since $\sigma$ is orthogonal, a change of variables then shows, for every non-negative measurable $\Phi$, that
$$
        \int_{\mathbb{R}^N} \Phi(\sigma x) \, d\omega(x) = \int_{\mathbb{R}^N} \Phi(x) \, d\omega(x).
$$

Let
$$
        \gamma=\sum_{\alpha\in R_+}\kappa(\alpha),   \qquad    \mathbf N=N+2\gamma.
$$
Since $w(sx)=s^{2\gamma}w(x)$, a change of variables leads to
\begin{align*}
    \omega(B(sx,sr))   &=\int_{B(sx,sr)}w(z)\,dz   =s^N\int_{B(x,r)}w(sy)\,dy
    =s^{\mathbf N}\omega(B(x,r)),   \qquad s,r>0.
\end{align*}
It follows from \cite[(3.1)--(3.2), p.~7]{ADH}  that 
$$
    \omega(B(x,r))   \simeq r^N\prod_{\alpha\in R_+}   \bigl(|\langle x,\alpha\rangle|+r\bigr)^{2\kappa(\alpha)}
$$
and, for $0<r\le R$,
$$
        C^{-1}\left(\frac Rr\right)^N    \le\frac{\omega(B(x,R))}{\omega(B(x,r))}    \le C\left(\frac Rr\right)^{\mathbf N}.
$$
In particular, $d\omega$ is doubling.

\subsection{Dunkl operators and the Dunkl Laplacian}

For $\xi \in \mathbb{R}^N$, the Dunkl operator is defined by
$$
        T_\xi f(x)   = \partial_\xi f(x)   + \sum_{\alpha\in R_+} \kappa(\alpha) \langle \alpha, \xi \rangle \,
        \frac{f(x) - f(\sigma_\alpha x)}{\langle \alpha, x \rangle}.
$$
For convenience, we write $T_j = T_{e_j}$ for $j=1, \ldots, N$,
where $\{e_j\}_{j=1}^N$ is the standard orthonormal basis of $\mathbb{R}^N$.
The corresponding Dunkl Laplacian is
$$
        \Delta_\kappa = \sum_{j=1}^N T_j^2.
$$
When acting on smooth functions, it has the following explicit form:
\begin{equation}\label{eq:lap}
        \Delta_\kappa f(x)  = \Delta f(x)  + 2\sum_{\alpha\in R_+} \kappa(\alpha)
        \left(  \frac{\partial_\alpha f(x)}{\langle \alpha, x \rangle}  
        - \frac{f(x) - f(\sigma_\alpha x)}{\langle \alpha, x \rangle^2}   \right),
\end{equation}
where $\Delta$ denotes the standard Euclidean Laplacian,
and $\partial_\alpha f = \langle \nabla f, \alpha \rangle$ is the directional derivative along $\alpha$.

We also use the upper half-space
$\mathbb R^{N+1}_+:=\mathbb R^N\times(0,\infty)$.  Set
$$
        \Delta_{\kappa,t} = \partial_t^2 + \Delta_\kappa.
$$

\subsection{The carr\'e du champ}

All functions under consideration may be complex-valued.
Accordingly, we introduce the sesquilinear energy form
$$
    \Gamma_\kappa(F,H)(x)   = \nabla F(x) \cdot \overline{\nabla H(x)}   + \sum_{\alpha\in R_+} \kappa(\alpha) \,
    \frac{\bigl(F(x) - F(\sigma_\alpha x)\bigr) \overline{\bigl(H(x) - H(\sigma_\alpha x)\bigr)}}{\langle \alpha, x \rangle^2}.
$$
We write $\Gamma_\kappa(F) = \Gamma_\kappa(F,F)$, which satisfies $\Gamma_\kappa(F) \ge 0$ pointwise.
The extended upper-half-space energy is
$$
        \Gamma_{\kappa,t}(U,V)  = \partial_t U \, \overline{\partial_t V} + \Gamma_\kappa(U,V),
        \qquad    \Gamma_{\kappa,t}(U) = \Gamma_{\kappa,t}(U,U).
$$
In particular,
$$
    \Gamma_{\kappa,t}(U)(x,t)  = |\partial_t U(x,t)|^2 + |\nabla_x U(x,t)|^2
    + \sum_{\alpha\in R_+} \kappa(\alpha) \,
    \frac{|U(x,t) - U(\sigma_\alpha x,t)|^2}{\langle \alpha, x \rangle^2}.
$$
For smooth $U$, the apparent wall singularity is removable.  Indeed, if $y\in\alpha^\perp$ and $x\to y$, then
$$
        \frac{U(x,t)-U(\sigma_\alpha x,t)}{\langle\alpha,x\rangle}
        \longrightarrow \partial_\alpha U(y,t).
$$
Thus the quotient extends continuously across $\alpha^\perp$ and is locally bounded on compact subsets;
its values on the $\omega$-null reflecting hyperplanes do not affect the integrals below.

The following product identity applies to complex-valued functions.

\begin{lemma}
For all smooth complex-valued functions $F$ and $H$ on $\mathbb{R}^N$,
\begin{equation}\label{eq:gprod}
        \Delta_\kappa(F\overline{H})-F\Delta_\kappa\overline{H}  -\overline{H}\Delta_\kappa F    =2\Gamma_\kappa(F,H).
\end{equation}
Consequently, every smooth complex-valued function $U=U(x,t)$ satisfies
\begin{equation}\label{eq:gsq}
        \Delta_{\kappa,t}|U|^2    -2\Real\bigl(\overline{U}\,\Delta_{\kappa,t}U\bigr)    =2\Gamma_{\kappa,t}(U).
\end{equation}
\end{lemma}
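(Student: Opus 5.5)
The plan is to prove \eqref{eq:gprod} by direct computation from the explicit formula \eqref{eq:lap}, splitting $\Delta_\kappa$ into its Euclidean part, its first-order drift part, and its reflection-difference part. Write $\overline H$ for the conjugate and note that $\overline{H}$ is smooth, so \eqref{eq:lap} applies to $F$, $\overline H$ and $F\overline H$. Because $\Delta_\kappa$ has real coefficients, $\Delta_\kappa\overline H=\overline{\Delta_\kappa H}$; this is the only place complex values enter.

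For the Euclidean Laplacian, the Leibniz rule gives
\[
    \Delta(F\overline H)-F\Delta\overline H-\overline H\Delta F=2\nabla F\cdot\nabla\overline H=2\nabla F\cdot\overline{\nabla H}.
\]
The drift term $\partial_\alpha/\langle\alpha,x\rangle$ is a derivation, so it contributes nothing to the commutator expression. For the difference term, set $g^\sigma(x)=g(\sigma_\alpha x)$ and consider the operator $D g=(g-g^\sigma)/\langle\alpha,x\rangle^2$, which enters with coefficient $-2\kappa(\alpha)$. Then
\[
    D(F\overline H)-F\,D\overline H-\overline H\,DF
    =\frac{F\overline H-F^\sigma\overline{H}^\sigma-F\overline H+F\overline H^\sigma-\overline H F+\overline H F^\sigma}{\langle\alpha,x\rangle^2}
    =-\frac{(F-F^\sigma)(\overline H-\overline H^\sigma)}{\langle\alpha,x\rangle^2}.
\]
Multiplying by $-2\kappa(\alpha)$ and summing over $\alpha\in R_+$ yields $2\sum_{\alpha\in R_+}\kappa(\alpha)(F-F^\sigma)\overline{(H-H^\sigma)}/\langle\alpha,x\rangle^2$. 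Adding the three contributions gives exactly $2\Gamma_\kappa(F,H)$.

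The identity is first obtained off $\mathcal W$. Both sides extend continuously across the walls, since the left side is smooth and the quotients are removable, as noted above. Hence it holds everywhere.

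For \eqref{eq:gsq}, apply \eqref{eq:gprod} with $F=H=U(\cdot,t)$ for each fixed $t$. The terms $U\Delta_\kappa\overline U+\overline U\Delta_\kappa U$ combine to $2\Real(\overline U\Delta_\kappa U)$, using $\Delta_\kappa\overline U=\overline{\Delta_\kappa U}$. Add the one-dimensional identity
\[
    \partial_t^2|U|^2-2\Real(\overline U\partial_t^2U)=2|\partial_tU|^2,
\]
which is the same Leibniz computation in $t$.

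The only delicate points are bookkeeping: the sign of the difference term in \eqref{eq:lap}, the reality of the coefficients (so that conjugation commutes with $\Delta_\kappa$), and the passage across the walls. No real obstacle is expected.
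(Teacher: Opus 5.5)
Your proposal is correct and takes essentially the same approach as the paper. Both proofs verify \eqref{eq:gprod} off $\mathcal W$ by inserting \eqref{eq:lap}, using three facts: the Euclidean Leibniz term gives $2\nabla F\cdot\overline{\nabla H}$, the first-order drift term contributes nothing, and the reflection-difference term factors as $(F-F_\alpha)\overline{(H-H_\alpha)}$. Both then extend across the walls by continuity and add the one-dimensional $t$-Leibniz identity to obtain \eqref{eq:gsq}.
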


\begin{proof}
Away from the reflecting hyperplanes, write $F_\alpha=F\circ\sigma_\alpha$ and
$H_\alpha=H\circ\sigma_\alpha$.  Substitution into \eqref{eq:lap}, followed by the Euclidean product rule, leads to
\begin{align*}
    &  \Delta_\kappa(F\overline{H})-F\Delta_\kappa\overline{H}    -\overline{H}\Delta_\kappa F\\
    =\,&2\nabla F\cdot\overline{\nabla H}   +2\sum_{\alpha\in R_+}\kappa(\alpha)   \frac{\partial_\alpha(F\overline{H})-F\partial_\alpha\overline{H}
       -\overline{H}\partial_\alpha F}{\langle\alpha,x\rangle}\\
    &\quad    +2\sum_{\alpha\in R_+}\kappa(\alpha)    \frac{-(F\overline{H}-F_\alpha\overline{H_\alpha})
       +F(\overline{H}-\overline{H_\alpha})
       +\overline{H}(F-F_\alpha)}{\langle\alpha,x\rangle^2}\\
    =\,&2\nabla F\cdot\overline{\nabla H}   +2\sum_{\alpha\in R_+}\kappa(\alpha)   \frac{(F-F_\alpha)\overline{(H-H_\alpha)}}{\langle\alpha,x\rangle^2}
    =2\Gamma_\kappa(F,H).
\end{align*}
Continuity across the reflecting hyperplanes extends \eqref{eq:gprod} to all of $\mathbb R^N$.

For $F=H=U$, append the ordinary $t$-product identity to \eqref{eq:gprod}:
\begin{align*}
    &\Delta_{\kappa,t}|U|^2  -2\Real\bigl(\overline{U}\,\Delta_{\kappa,t}U\bigr)\\
    =\,&\Delta_\kappa|U|^2-U\Delta_\kappa\overline{U}  -\overline{U}\Delta_\kappa U
      +\partial_t^2|U|^2-U\partial_t^2\overline{U}   -\overline{U}\partial_t^2U\\
    =\,&2\Gamma_\kappa(U)+2|\partial_tU|^2   =2\Gamma_{\kappa,t}(U).
\end{align*}
\end{proof}

For $G$-invariant factors, the product and chain rules retain their Euclidean form.

\begin{lemma}
Let $U=U(x,t)$ be smooth and complex-valued.  If $a=a(x)$ is smooth,
real-valued, and $G$-invariant, then
\begin{equation}\label{eq:prod}
        \Delta_\kappa(aU)   =a\Delta_\kappa U+U\Delta_\kappa a+2\nabla a\cdot\nabla U.
\end{equation}
If $A=A(x,t)$ is smooth, real-valued, and $G$-invariant in $x$, then
\begin{equation}\label{eq:prodt}
        \Delta_{\kappa,t}(AU) =A\Delta_{\kappa,t}U+U\Delta_{\kappa,t}A    +2\nabla_{x,t}A\cdot\nabla_{x,t}U.
\end{equation}
If $V=V(x,t)$ is smooth, real-valued, and $G$-invariant in $x$, then every
$\Phi\in C^2(\mathbb{R})$ satisfies
\begin{equation}\label{eq:chain}
        \Delta_{\kappa,t}\Phi(V)    =\Phi'(V)\Delta_{\kappa,t}V+\Phi''(V)|\nabla_{x,t}V|^2,
\end{equation}
where $\nabla_{x,t}=(\nabla_x,\partial_t)$.
\end{lemma}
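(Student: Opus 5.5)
The plan is to derive all three identities from the product identity \eqref{eq:gprod} of the previous lemma. The one new ingredient is that a $G$-invariant factor has vanishing reflection differences. If $a$ is $G$-invariant, then $a(\sigma_\alpha x)=a(x)$ for every $\alpha\in R_+$. Consequently, in
$$
\Gamma_\kappa(U,a)=\nabla U\cdot\nabla a+\sum_{\alpha\in R_+}\kappa(\alpha)\frac{\bigl(U(x)-U(\sigma_\alpha x)\bigr)\bigl(a(x)-a(\sigma_\alpha x)\bigr)}{\langle\alpha,x\rangle^2}
$$
every wall term vanishes off $\mathcal W$, and $\Gamma_\kappa(U,a)=\nabla U\cdot\nabla a$ there. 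Here $a$ is real, so $\overline a=a$.

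To get \eqref{eq:prod}, apply \eqref{eq:gprod} with $F=U(\cdot,t)$ and $H=a$, at fixed $t$. This gives
$$
\Delta_\kappa(aU)=U\Delta_\kappa a+a\Delta_\kappa U+2\nabla a\cdot\nabla U
$$
off $\mathcal W$. Both sides are continuous, since the quotients in \eqref{eq:lap} extend continuously for smooth functions. So the identity extends across $\mathcal W$, exactly as in the previous proof. As a safeguard, one could instead substitute $aU$ directly into \eqref{eq:lap}. In that computation the wall term splits as $aU-(aU)\circ\sigma_\alpha=a\,(U-U\circ\sigma_\alpha)$ by invariance, and the $\partial_\alpha$ term obeys the ordinary Leibniz rule.

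For \eqref{eq:prodt}, add the elementary identity $\partial_t^2(AU)=A\partial_t^2U+U\partial_t^2A+2\partial_tA\,\partial_tU$ to \eqref{eq:prod}, applied at each fixed $t$ with $a=A(\cdot,t)$. The cross terms combine into $2\nabla_{x,t}A\cdot\nabla_{x,t}U$.

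For \eqref{eq:chain}, first note that $\Phi(V)$ is $G$-invariant in $x$, so every reflection difference $\Phi(V)(x)-\Phi(V)(\sigma_\alpha x)$ vanishes. The same holds for $V$. Hence \eqref{eq:lap} applied to either function reduces to $\Delta+2\sum_{\alpha\in R_+}\kappa(\alpha)\partial_\alpha/\langle\alpha,x\rangle$, which is a first-order drift plus the Euclidean Laplacian. The drift part satisfies the first-order chain rule $\partial_\alpha\Phi(V)=\Phi'(V)\partial_\alpha V$. The Euclidean part $\Delta+\partial_t^2$ satisfies the usual second-order chain rule. Adding the two gives
$$
\Delta_{\kappa,t}\Phi(V)=\Phi'(V)\Delta_{\kappa,t}V+\Phi''(V)|\nabla_{x,t}V|^2.
$$

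The main point needing care is the removable singularity on $\mathcal W$. The term $\partial_\alpha V/\langle\alpha,x\rangle$ is not individually bounded there. I would therefore verify the identities only off $\mathcal W$, where $\langle\alpha,x\rangle\neq0$, and then extend by continuity, since both sides are continuous for smooth data. The case $\Phi\in C^2$ only, rather than smooth, causes no trouble: only $\Phi'$ and $\Phi''$ appear, and $\Phi(V)$ is $C^2$.
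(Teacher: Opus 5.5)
Your proposal is correct and follows the paper's proof essentially step for step. You use \eqref{eq:gprod} with $F=U$, $H=a$ and the vanishing wall terms for \eqref{eq:prod}, then the fixed-$t$ spatial identity plus the ordinary $t$-Leibniz rule for \eqref{eq:prodt}. For \eqref{eq:chain}, you reduce $\Delta_{\kappa,t}$ on $G$-invariant functions to $\partial_t^2+\Delta$ plus a first-order drift, and continuity covers $\mathcal W$, exactly as in the paper.

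One side remark needs correcting. For smooth $G$-invariant $V$, $\partial_\alpha V$ is odd under $\sigma_\alpha$ and so vanishes on $\alpha^\perp$. Hence $\partial_\alpha V/\langle\alpha,x\rangle$ is in fact locally bounded, not unbounded as you suggest. Your verify-off-$\mathcal W$-then-extend-by-continuity argument does not depend on this, so the proof stands.
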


\begin{proof}
Since $a\circ\sigma_\alpha=a$, the reflection part of $\Gamma_\kappa(U,a)$ vanishes.  Equation~\eqref{eq:gprod}
with $F=U$ and $H=a$ therefore shows
\begin{align*}
     \Delta_\kappa(aU)-a\Delta_\kappa U-U\Delta_\kappa a   &=2\Gamma_\kappa(U,a)
    =2\nabla U\cdot\nabla a   =2\nabla a\cdot\nabla U,
\end{align*}
which is \eqref{eq:prod}.  At each fixed $t$, applying the spatial identity to $A(\cdot,t)$ and expanding the ordinary
$t$-derivative, we obtain
\begin{align*}
    \Delta_{\kappa,t}(AU)   &=\Delta_\kappa(AU)+\partial_t^2(AU)\\
    &=A\Delta_\kappa U+U\Delta_\kappa A   +2\nabla_xA\cdot\nabla_xU
      +A\partial_t^2U+U\partial_t^2A   +2\partial_tA\,\partial_tU\\
    &=A\Delta_{\kappa,t}U+U\Delta_{\kappa,t}A   +2\nabla_{x,t}A\cdot\nabla_{x,t}U.
\end{align*}

For \eqref{eq:chain}, the reflection differences of both $V$ and $\Phi(V)$ vanish.  Away from the reflecting
hyperplanes, we have
\begin{align*}
    \Delta_{\kappa,t}\Phi(V)   &=\partial_t^2\Phi(V)+\Delta\Phi(V)   +2\sum_{\alpha\in R_+}\kappa(\alpha)
       \frac{\partial_\alpha\Phi(V)}{\langle\alpha,x\rangle}\\
    &=\Phi'(V) \,\bigg(   \partial_t^2V+\Delta V   +2\sum_{\alpha\in R_+}\kappa(\alpha)
        \frac{\partial_\alpha V}{\langle\alpha,x\rangle}  \bigg)  +\Phi''(V)|\nabla_{x,t}V|^2\\
    &=\Phi'(V)\Delta_{\kappa,t}V     +\Phi''(V)|\nabla_{x,t}V|^2.
\end{align*}
Continuity covers the reflecting hyperplanes.
\end{proof}

\section{Orbit geometry and Poisson estimates}\label{sec:pois}

The localization uses two orbit-geometric estimates: maximal control near the thick good set and Poisson decay
outside its enlarged tent.

\subsection{Orbit distance and orbit balls}

Define the orbit distance by
$$
        d(x,y)=\min_{\sigma\in G}\|x-\sigma y\|     =\min_{\sigma\in G}\|\sigma x-y\|.
$$
For $\rho,\tau\in G$, we have $ d(\rho x,\tau y)=d(x,y)$, and
$$
        d(x,y)=0\quad\Longleftrightarrow\quad y=\sigma x   \quad\text{for some }\sigma\in G.
$$
Choose $\sigma,\tau\in G$ such that
$d(x,y)=\|x-\sigma y\|$ and $d(y,z)=\|y-\tau z\|$.  Then
\begin{align*}
        d(x,z)   &\le \|x-\sigma\tau z\|    \le \|x-\sigma y\|+\|\sigma y-\sigma\tau z\|
         =d(x,y)+d(y,z).
\end{align*}
Thus $d$ is a $G$-invariant pseudometric on $\mathbb R^N$ and induces a metric on $\mathbb R^N/G$.

For $r>0$, let
$$
        \calO(x,r):=\{y\in\mathbb R^N:d(x,y)<r\}    =\bigcup_{\sigma\in G}B(\sigma x,r),
$$
and
$$
        V(x,r):=\omega(B(x,r)),    \qquad    V(x,y,r):=\max\{V(x,r),V(y,r)\}.
$$
The $G$-invariance of $d\omega$ implies
\begin{equation}\label{eq:ovol}
        V(x,r)   \le\omega(\calO(x,r))     \le\sum_{\sigma\in G}V(\sigma x,r)   =|G|V(x,r).
\end{equation}
Consequently,
$$
        \omega(\calO(x,2r))    \le |G|V(x,2r)  \lesssim |G|V(x,r)    \le |G|\omega(\calO(x,r)),
$$
so the orbit balls are doubling.

Finally, suppose that $d(x,y)<r$, and choose $\sigma\in G$ such that
$\|\sigma x-y\|<r$.  Since
$$
        B(y,r)\subset B(\sigma x,2r),   \qquad    B(\sigma x,r)\subset B(y,2r),
$$
the $G$-invariance of $d\omega$ and doubling show that
\begin{align*}
        V(y,r)    &\le V(\sigma x,2r)    \lesssim V(\sigma x,r)
        =V(x,r)   \le V(y,2r)   \lesssim V(y,r).
\end{align*}
Hence $V(x,r)\simeq V(y,r)$, with constants depending only on the doubling constant.

\subsection{The Dunkl Poisson semigroup and kernel estimates}

Let $\mathcal F_\kappa$ denote the Dunkl transform.  By
\cite[Lemma~2.6 (2) and Theorem~2.6 (1)--(2), pp.~109--110]{RM},
$\mathcal F_\kappa$ is unitary on $L^2(d\omega)$, preserves
$\mathcal S(\mathbb R^N)$, and satisfies
$$
        \mathcal F_\kappa(-\Delta_\kappa f)(\xi)    =\|\xi\|^2\mathcal F_\kappa f(\xi),   \qquad f\in\mathcal S(\mathbb R^N).
$$
We use $-\Delta_\kappa$ for the non-negative self-adjoint realization characterized by
$$
        D(-\Delta_\kappa)     :=\bigl\{f\in L^2(d\omega):     \|\xi\|^2\mathcal F_\kappa f\in L^2(d\omega)\bigr\},
        \qquad   \mathcal F_\kappa(-\Delta_\kappa f)(\xi)      =\|\xi\|^2\mathcal F_\kappa f(\xi).
$$
Since $\omega(\{0\})=0$, every $f\in D(-\Delta_\kappa)$ satisfies
$$
    -\Delta_\kappa f=0  \ \Longrightarrow\    \|\xi\|^2\mathcal F_\kappa f(\xi)=0\quad d\omega\text{-a.e.}
    \ \Longrightarrow\     \mathcal F_\kappa f=0    \ \Longrightarrow\   f=0.
$$
Hence
$$
        \ker_{L^2(d\omega)}(-\Delta_\kappa)=\{0\}.
$$

For $t>0$, define
$  P_t=e^{-t\sqrt{-\Delta_\kappa}}.$
Equivalently,
$$
        \mathcal F_\kappa(P_tf)(\xi)    =e^{-t\|\xi\|}\mathcal F_\kappa f(\xi),    \qquad f\in L^2(d\omega).
$$
By \cite[(5.1)--(5.2)]{ADH}, $P_t$ admits a symmetric non-negative kernel:
$$
        P_tf(x)=\int_{\mathbb R^N}p_t(x,y)f(y)\,d\omega(y),   \qquad     p_t(x,y)=p_t(y,x)\ge0.
$$
Moreover,
$$
        \int_{\mathbb R^N}p_t(x,y)\,d\omega(y)=P_t1(x)=1.
$$
For every non-negative Borel function $h$, the same integral defines $P_th(x)\in[0,\infty]$.

Let $f\in C_c^\infty(\mathbb R^N)$ and $t>0$.  The multiplier formula yields
$$
    \bigl\|\|\xi\|^2\mathcal F_\kappa(P_tf)(\xi)\bigr\|_2
    \le \sup_{r\ge0}r^2e^{-tr}\,\|f\|_2<\infty,
$$
so $P_tf\in D(-\Delta_\kappa)$.  Moreover,
\begin{align*}
    \mathcal F_\kappa(\partial_t^2P_tf)(\xi)  &=\|\xi\|^2e^{-t\|\xi\|}\mathcal F_\kappa f(\xi)
     =\mathcal F_\kappa(-\Delta_\kappa P_tf)(\xi).
\end{align*}
Thus $\partial_t^2P_tf=-\Delta_\kappa P_tf$ in $L^2(d\omega)$.  By
\cite[Proposition~5.1(c)]{ADH}, $P_tf$ is smooth.  For $\phi\in C_c^\infty(\mathbb R^N)$,
self-adjointness and the Dunkl integration-by-parts formula
\cite[Proposition~2.1, pp.~101--102]{RM} give
\begin{align*}
    \langle-\Delta_\kappa P_tf,\phi\rangle   &=\langle P_tf,-\Delta_\kappa\phi\rangle    =\langle-\Delta_\kappa P_tf,\phi\rangle,
\end{align*}
where the first and last occurrences of $-\Delta_\kappa P_tf$ denote, respectively, the self-adjoint realization
and the smooth differential-difference expression.  Hence
$$
        \partial_t^2P_tf=-\Delta_\kappa P_tf     \qquad\text{in }L^2(d\omega).
$$
Both sides are continuous.  Since $d\omega$ has full support,
$$
        \Delta_{\kappa,t}P_tf(x)=0,   \qquad x\in\mathbb R^N,\quad t>0.
$$

The Poisson semigroup is $G$-equivariant.

\begin{lemma}\label{lem:peq}
For every bounded Borel function $f$, every $\sigma\in G$, and every $t>0$,
$$
        P_t(f\circ\sigma)=(P_tf)\circ\sigma.
$$
Equivalently,
$$
        p_t(\sigma x,\sigma y)=p_t(x,y),
        \qquad x,y\in\mathbb R^N.
$$
Consequently, $P_tf$ is $G$-invariant whenever $f$ is $G$-invariant.
\end{lemma}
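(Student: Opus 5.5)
The plan is to prove equivariance first on $L^2(d\omega)$ through the Dunkl transform, then convert it into the kernel identity, and finally pass to bounded Borel functions by positivity and monotone convergence.

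\textbf{Step 1: the Dunkl transform intertwines reflections.} For $\sigma\in G$ define $R_\sigma f=f\circ\sigma$. By the $G$-invariance of $d\omega$ established above, $R_\sigma$ is unitary on $L^2(d\omega)$. I will use the standard property of the Dunkl kernel, $E_\kappa(\sigma x,\sigma\xi)=E_\kappa(x,\xi)$, cited from \cite{RM}. Combined with the change of variables $x\mapsto\sigma^{-1}x$, it gives
$$
\mathcal F_\kappa(f\circ\sigma)(\xi)=\mathcal F_\kappa f(\sigma\xi)
$$
for $f\in\mathcal S(\mathbb R^N)$. By unitarity this extends to $L^2(d\omega)$. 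Since $\|\sigma\xi\|=\|\xi\|$, we get
$$
\mathcal F_\kappa\bigl(P_t(f\circ\sigma)\bigr)(\xi)=e^{-t\|\xi\|}\mathcal F_\kappa f(\sigma\xi)=\mathcal F_\kappa\bigl((P_tf)\circ\sigma\bigr)(\xi).
$$
Injectivity of $\mathcal F_\kappa$ then yields $P_t(f\circ\sigma)=(P_tf)\circ\sigma$ in $L^2(d\omega)$.

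\textbf{Step 2: the kernel identity.} Take $f\in C_c(\mathbb R^N)$. Writing both sides of Step 1 with the kernel and substituting $y\mapsto\sigma^{-1}y$ (again using invariance of $d\omega$) gives
$$
\int p_t(x,\sigma^{-1}y)f(y)\,d\omega(y)=\int p_t(\sigma x,y)f(y)\,d\omega(y)
$$
for a.e. $x$. Both sides are continuous in $x$, since $P_t$ maps into smooth functions by \cite{ADH}, and $\omega$ has full support, so the identity holds for every $x$. For fixed $x$, the $y$-integrands are continuous in $y$, because the kernel estimates of \cite{ADH} give continuity of $p_t$. Varying $f$ therefore forces
$$
p_t(x,\sigma^{-1}y)=p_t(\sigma x,y)\quad\text{for all }x,y,
$$
which is $p_t(\sigma x,\sigma y)=p_t(x,y)$. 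Conversely, this identity and the change of variables give the operator identity, which establishes the claimed equivalence.

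\textbf{Step 3: bounded Borel functions.} Once the kernel identity is known, the same change of variables applied to the kernel integral gives $P_t(f\circ\sigma)(x)=P_tf(\sigma x)$ for every bounded Borel $f$. The integrals converge absolutely because $\int p_t(x,y)\,d\omega(y)=1$. If $f$ is $G$-invariant, then $f\circ\sigma=f$, so $P_tf=(P_tf)\circ\sigma$ for every $\sigma\in G$.

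\textbf{Main obstacle.} The delicate point is Step 1, specifically the invariance $E_\kappa(\sigma x,\sigma\xi)=E_\kappa(x,\xi)$. If I want to avoid citing it, I would instead prove that $R_\sigma$ commutes with $\Delta_\kappa$ on $\mathcal S(\mathbb R^N)$. This follows from the explicit formula \eqref{eq:lap}, because $G$ permutes $R$ up to sign and $\kappa$ is $G$-invariant. The commutation then passes to the self-adjoint realization, since $\mathcal S$ is a core and $R_\sigma$ is unitary, and from there to $e^{-t\sqrt{-\Delta_\kappa}}$ by the functional calculus.
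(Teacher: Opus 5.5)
Your proposal is correct and follows essentially the same route as the paper: covariance of the Dunkl kernel gives $\mathcal F_\kappa(f\circ\sigma)(\xi)=\mathcal F_\kappa f(\sigma\xi)$ and hence $P_t$ commutes with $f\mapsto f\circ\sigma$ on $L^2(d\omega)$; testing against compactly supported functions, together with continuity of the kernel and full support of $d\omega$, yields $p_t(x,\sigma^{-1}z)=p_t(\sigma x,z)$; the kernel integral then extends the identity to bounded Borel data. You are slightly more explicit than the paper in upgrading the almost-everywhere identity in $x$ to an everywhere one via continuity of both sides, a step the paper uses tacitly.
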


\begin{proof}
Let $U_\sigma f=f\circ\sigma$.  The $G$-invariance of $d\omega$ and the covariance of the Dunkl kernel imply
$$
        \mathcal F_\kappa(U_\sigma f)(\xi)
        =\mathcal F_\kappa f(\sigma\xi).
$$
Since $\|\sigma\xi\|=\|\xi\|$, for every $f\in L^2(d\omega)$,
\begin{align*}
     \mathcal F_\kappa(P_tU_\sigma f)(\xi)  &=e^{-t\|\xi\|}\mathcal F_\kappa f(\sigma\xi)
      =e^{-t\|\sigma\xi\|}\mathcal F_\kappa f(\sigma\xi)
      =\mathcal F_\kappa(U_\sigma P_tf)(\xi).
\end{align*}
Hence $P_tU_\sigma=U_\sigma P_t$ on $L^2(d\omega)$.

For $f\in C_c^\infty(\mathbb R^N)$, the kernel representation gives
\begin{align*}
     0=P_t(U_\sigma f)(x)-U_\sigma(P_tf)(x)   =\int_{\mathbb R^N}
       \bigl[p_t(x,\sigma^{-1}z)-p_t(\sigma x,z)\bigr]f(z)\,d\omega(z).
\end{align*}
The bracket is continuous in $z$ and integrates to zero against every
$f\in C_c^\infty(\mathbb R^N)$.  Since $d\omega$ has full support,
$$
        p_t(x,\sigma^{-1}z)=p_t(\sigma x,z),    \qquad x,z\in\mathbb R^N.
$$
Taking $z=\sigma y$ yields
$$
        p_t(\sigma x,\sigma y)=p_t(x,y).
$$
Therefore, for every bounded Borel function $f$,
\begin{align*}
     P_t(f\circ\sigma)(x)   &=\int_{\mathbb R^N}p_t(x,\sigma^{-1}z)f(z)\,d\omega(z)
     =\int_{\mathbb R^N}p_t(\sigma x,z)f(z)\,d\omega(z)    =(P_tf)(\sigma x).
\end{align*}
\end{proof}

By \cite[Proposition~5.1(a)]{ADH}, for every $t>0$ and $x,y\in\mathbb R^N$,
\begin{equation}\label{eq:psize}
        0\le p_t(x,y)      \le \frac{C}{V(x,y,t+d(x,y))}\,\frac{t}{t+d(x,y)}.
\end{equation}

\begin{lemma}\label{lem:lp}
For every $h\in L^2(\mathbb R^N,d\omega)$,
\begin{equation}\label{eq:lp2}
        \int_0^\infty\int_{\mathbb R^N}    \Gamma_{\kappa,t}(P_th)(y,t)\,t\,d\omega(y)\,dt     =\frac12\|h\|_{L^2(d\omega)}^2.
\end{equation}
Consequently,
\begin{equation}\label{eq:lpg}
        \int_0^\infty\int_{\mathbb R^N}   |\nabla_{y,t}P_th(y)|^2\,t\,d\omega(y)\,dt    \le\frac12\|h\|_{L^2(d\omega)}^2.
\end{equation}
\end{lemma}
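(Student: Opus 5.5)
The plan is to use the Dunkl transform and the integrated form of the carré du champ identity. Fix first $h\in C_c^\infty(\mathbb R^N)$ and put $u(y,t)=P_th(y)$. For each fixed $t>0$, the spatial energy will be expressed through the quadratic form of $-\Delta_\kappa$. Integrating \eqref{eq:gprod} with $F=H=u(\cdot,t)$ against $d\omega$, and using $\int\Delta_\kappa\Phi\,d\omega=0$ for rapidly decaying smooth $\Phi$, gives
\[
\int_{\mathbb R^N}\Gamma_\kappa(u)\,d\omega=\Real\langle -\Delta_\kappa u,u\rangle=\int\|\xi\|^2e^{-2t\|\xi\|}|\mathcal F_\kappa h(\xi)|^2\,d\omega(\xi).
\]
The vanishing of $\int\Delta_\kappa\Phi\,d\omega$ follows from the Dunkl integration-by-parts formula \cite[Proposition~2.1]{RM}, which applies because $P_th$ lies in $D(-\Delta_\kappa)$ and its Dunkl derivatives decay. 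Alternatively, one can check directly that $\int(\Delta_\kappa u)\bar u\,d\omega=-\int\Gamma_\kappa(u)\,d\omega$ by Plancherel, using $\mathcal F_\kappa(T_ju)=i\xi_j\mathcal F_\kappa u$ and the antisymmetry of $T_j$ in $L^2(d\omega)$.

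The time derivative is handled by Plancherel: $\|\partial_tu(\cdot,t)\|_2^2=\int\|\xi\|^2e^{-2t\|\xi\|}|\mathcal F_\kappa h|^2\,d\omega$. Adding the two contributions gives
\[
\int\Gamma_{\kappa,t}(u)\,d\omega=2\int\|\xi\|^2e^{-2t\|\xi\|}|\mathcal F_\kappa h|^2\,d\omega.
\]
Multiply by $t$, integrate in $t$, and apply Tonelli. Since $\int_0^\infty t\,r^2e^{-2tr}\,dt=\tfrac14$ for $r>0$, and $\{\xi=0\}$ is $\omega$-null, the total equals $2\cdot\tfrac14\|\mathcal F_\kappa h\|_2^2=\tfrac12\|h\|_2^2$ by unitarity. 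This proves \eqref{eq:lp2} for test functions.

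To pass to general $h\in L^2(d\omega)$, approximate $h$ by $h_n\in C_c^\infty$ in $L^2(d\omega)$. The identity shows that $h\mapsto(\Gamma_{\kappa,t}(P_th))^{1/2}$ is an isometry up to the factor $2^{-1/2}$ from $L^2$ into $L^2(t\,d\omega\,dt)$. More precisely, the map $h\mapsto(\partial_tP_th,\nabla P_th,(\kappa(\alpha))^{1/2}(P_th-P_th\circ\sigma_\alpha)/\langle\alpha,\cdot\rangle)$ is linear and bounded, so the sequence $(P_th_n)$ is Cauchy in the weighted space. Locally uniform convergence of $P_th_n$ and its derivatives comes from the kernel bound \eqref{eq:psize} and smoothness of the kernel. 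These two facts identify the limit pointwise, which yields \eqref{eq:lp2}. Finally, \eqref{eq:lpg} is immediate because the reflection terms in $\Gamma_{\kappa,t}$ are non-negative, so $|\nabla_{y,t}P_th|^2\le\Gamma_{\kappa,t}(P_th)$.

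I expect the main obstacle to be the justification of $\int\Delta_\kappa(|u|^2)\,d\omega=0$, equivalently the spatial Plancherel identity $\int\Gamma_\kappa(u)\,d\omega=\langle-\Delta_\kappa u,u\rangle$, for $u=P_th$. The issue is that $u$ is not compactly supported, only smooth in $D(-\Delta_\kappa)$ with $\mathcal F_\kappa u$ decaying like $e^{-t\|\xi\|}$. I would handle this by observing that $\mathcal F_\kappa u$ and all $\|\xi\|^k\mathcal F_\kappa u$ lie in $L^2$, so each $T_ju\in L^2$. The identity $\sum_j\|T_ju\|_2^2=\int\Gamma_\kappa(u)\,d\omega$ then follows by expanding $|T_ju|^2$ and using the reflection symmetry of $d\omega$ to cancel the cross terms, after truncating with a radial (hence $G$-invariant) cutoff $\chi(\cdot/R)$ and letting $R\to\infty$.
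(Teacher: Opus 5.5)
Your plan is sound and uses the same two ingredients as the paper. The first is the carr\'e du champ identity \eqref{eq:gprod}, integrated against a radial cutoff, giving $\int\Gamma_\kappa(P_th)\,d\omega=\|\sqrt{-\Delta_\kappa}P_th\|_2^2$. The second is the spectral computation in $t$: your $\int_0^\infty t\,r^2e^{-2tr}\,dt=\tfrac14$ is the paper's $2\int_0^\infty\lambda e^{-2t\sqrt\lambda}t\,dt=\tfrac12$.

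The organization differs. The paper never approximates $h$. It uses $\|\partial_s^m\partial_y^\nu p_s(y,\cdot)\|_2^2\le Cs^{-2m-2|\nu|}p_{2s}(y,y)$ to show that the kernel formula gives a smooth representative of $P_sh$ for every $h\in L^2(d\omega)$. It then identifies the self-adjoint and pointwise $-\Delta_\kappa P_sh$ by testing against $C_c^\infty$, and integrates against the spectral measure $\mu_h$. You instead prove the identity for $h\in C_c^\infty$, where that identification is already available before the lemma, and extend by the isometry. That works, but it needs the same quantitative input. The size bound \eqref{eq:psize} does not control derivatives, so your ``locally uniform convergence of derivatives'' requires $|\partial_t^m\partial_y^\nu p_t(y,z)|\le Ct^{-m-|\nu|}p_t(y,z)$ [ADH, Proposition~5.1(c)] plus Cauchy--Schwarz. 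The same bound is what makes $\Gamma_{\kappa,t}(P_th)$ meaningful for $h\in L^2$ at all.

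The step you single out as the main obstacle is not correctly resolved as written.
\begin{enumerate}[(1)]
\item $P_th$ is not rapidly decaying, since the Poisson kernel decays only polynomially.
\item Writing $\int\Delta_\kappa|u|^2\,d\omega=0$ presupposes $\Delta_\kappa|u|^2\in L^1(d\omega)$, i.e.\ $\Gamma_\kappa(u)\in L^1(d\omega)$, which is part of what you are proving.
\item Your fallback via $\sum_j\|T_ju\|_2^2$ does not close by symmetry. Pointwise, $\sum_j|T_ju|^2\ne\Gamma_\kappa(u)$. The mixed term $2\Real\sum_\alpha\kappa(\alpha)\partial_\alpha u\,\overline{(u-u\circ\sigma_\alpha)}/\langle\alpha,x\rangle$ does not vanish after reflection symmetrization. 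It must be integrated by parts against the weight, and the resulting $\alpha\ne\beta$ interactions cancel only through a root-system identity. Already for $N=1$, with $u_o(x)=\tfrac12(u(x)-u(-x))$, its integral is $-2\kappa(2\kappa-1)\int|u_o|^2x^{-2}\,d\omega$. This is balanced against the $\kappa^2$ diagonal term, not cancelled.
\end{enumerate}

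The fix is the paper's truncated identity: with $\eta_R(y)=\eta(\|y\|^2/R^2)$,
\[
2\int\eta_R\Gamma_\kappa(u)\,d\omega=\int|u|^2\Delta_\kappa\eta_R\,d\omega+2\Real\int\eta_R(-\Delta_\kappa u)\,\overline u\,d\omega .
\]
To obtain it from \eqref{eq:gprod}, replace $|u|^2$ by $\chi_R|u|^2$ with a radial $\chi_R=1$ on $B(0,4R)$. Radiality keeps reflected points of $\operatorname{supp}\eta_R$ inside that ball, so $\Delta_\kappa$ is unchanged there, and the symmetry of $\Delta_\kappa$ on $C_c^\infty$ applies. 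Then pass to the limit term by term:
\begin{enumerate}[(a)]
\item by \eqref{eq:cut}, the first term is $O(R^{-2}\|u\|_2^2)$;
\item the second converges by dominated convergence, since $(-\Delta_\kappa u)\overline u\in L^1(d\omega)$;
\item monotone convergence handles the left side.
\end{enumerate}
No a priori integrability of $\Gamma_\kappa(u)$ or decay of $u$ is needed.
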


\begin{proof}
Let $E$ be the spectral resolution of $-\Delta_\kappa$, and let
$$
        \mu_h(B):=\langle E(B)h,h\rangle.
$$
Fix $s>0$ and write $F=P_sh$.  Then
\begin{align*}
     \|-\Delta_\kappa F\|_2^2  &=\int_{[0,\infty)}\lambda^2e^{-2s\sqrt\lambda}\,d\mu_h(\lambda)
     \le\sup_{\lambda\ge0}\lambda^2e^{-2s\sqrt\lambda}\,\|h\|_2^2    <\infty,
\end{align*}
so $F\in D(-\Delta_\kappa)$.

For every non-negative integer $m$ and multi-index $\nu$,
\cite[Proposition~5.1(c)]{ADH}, symmetry, and the semigroup identity imply
\begin{align*}
     \|\partial_s^m\partial_y^\nu p_s(y,\cdot)\|_2^2   &\le Cs^{-2m-2|\nu|}
       \int_{\mathbb R^N}p_s(y,z)^2\,d\omega(z)\\
     &=Cs^{-2m-2|\nu|}p_{2s}(y,y)<\infty.
\end{align*}
The case $m=|\nu|=0$ and $L^2$ density extend the kernel formula for $P_s$ to
$h\in L^2(d\omega)$.  Applying the higher-order bounds also to one additional derivative shows that
$(s,y)\mapsto p_s(y,\cdot)$ is a smooth $L^2(d\omega)$-valued map.  Thus the kernel formula gives a smooth
representative of $F$.

For $\phi\in C_c^\infty(\mathbb R^N)$, self-adjointness and the Dunkl integration-by-parts formula give
\begin{align*}
     \langle-\Delta_\kappa F,\phi\rangle   &=\langle F,-\Delta_\kappa\phi\rangle    =\langle-\Delta_\kappa F,\phi\rangle,
\end{align*}
where the first and last terms denote the self-adjoint realization and the smooth differential-difference expression,
respectively.

Choose a non-increasing $\eta\in C_c^\infty([0,\infty))$ such that
$0\le\eta\le1$, $\eta=1$ on $[0,1]$, and $\eta=0$ on $[4,\infty)$.
Let $\eta_R(y):=\eta(\|y\|^2/R^2)$.  Since $\eta_R$ is radial,
$$
     \Delta_\kappa\eta_R(y)   =\frac{4\|y\|^2}{R^4}\eta''(\|y\|^2/R^2)   +\frac{2\mathbf N}{R^2}\eta'(\|y\|^2/R^2),
$$
and therefore
\begin{equation}\label{eq:cut}
        |\Delta_\kappa\eta_R(y)|   \le CR^{-2}\one_{\{R\le\|y\|\le2R\}}(y).
\end{equation}
Choose a radial $\chi_R\in C_c^\infty(\mathbb R^N)$ such that
$\chi_R=1$ on $B(0,4R)$ and
$\operatorname{supp}\chi_R\subset B(0,8R)$.  Since reflections preserve $\|y\|$,
\begin{align*}
     \int_{\mathbb R^N}\eta_R\Delta_\kappa|F|^2\,d\omega
     &=\int_{\mathbb R^N}\eta_R\Delta_\kappa(\chi_R|F|^2)\,d\omega
     =\int_{\mathbb R^N}|F|^2\Delta_\kappa\eta_R\,d\omega.
\end{align*}
Consequently, \eqref{eq:gprod} gives
\begin{align*}
     2\int_{\mathbb R^N}\eta_R\Gamma_\kappa(F)\,d\omega
     &=\int_{\mathbb R^N}|F|^2\Delta_\kappa\eta_R\,d\omega
       +2\Real\int_{\mathbb R^N}\eta_R(-\Delta_\kappa F)\,\overline F\,d\omega
     \longrightarrow   2\langle-\Delta_\kappa F,F\rangle.
\end{align*}
Indeed, \eqref{eq:cut} bounds the first term by
$CR^{-2}\|F\|_2^2$, while
$(-\Delta_\kappa F)\overline F\in L^1(d\omega)$.
Since $0\le\eta_R\uparrow1$, monotone convergence yields
\begin{equation}\label{eq:form}
        \int_{\mathbb R^N}\Gamma_\kappa(P_sh)\,d\omega   =\langle-\Delta_\kappa P_sh,P_sh\rangle
        =\|\sqrt{-\Delta_\kappa}\,P_sh\|_2^2.
\end{equation}

Finally,
$\partial_tP_th=-\sqrt{-\Delta_\kappa}\,P_th$,
\eqref{eq:form}, $\mu_h(\{0\})=0$, and Tonelli's theorem give
\begin{align*}
     \int_0^\infty\int_{\mathbb R^N}   \Gamma_{\kappa,t}(P_th)(y,t)\,t\,d\omega(y)\,dt
     &=2\int_{(0,\infty)}    \left(\int_0^\infty\lambda e^{-2t\sqrt\lambda}t\,dt\right)d\mu_h(\lambda)\\
     &=\frac12\mu_h((0,\infty))   =\frac12\|h\|_{L^2(d\omega)}^2.
\end{align*}
The pointwise bound
$|\nabla_{y,t}P_th|^2\le\Gamma_{\kappa,t}(P_th)$ now gives
\begin{align*}
     \int_0^\infty\int_{\mathbb R^N}  |\nabla_{y,t}P_th(y)|^2\,t\,d\omega(y)\,dt
     &\le\int_0^\infty\int_{\mathbb R^N}   \Gamma_{\kappa,t}(P_th)(y,t)\,t\,d\omega(y)\,dt
     =\frac12\|h\|_{L^2(d\omega)}^2.
\end{align*}
\end{proof}

\subsection{Orbit maximal operators}

For $h\in L^1_{\mathrm{loc}}(\mathbb R^N,d\omega)$, define
\begin{equation*}
        \mathcal{M}_{\calO}h(x)     :=\sup_{r>0}\frac{1}{\omega(\calO(x,r))}
        \int_{\calO(x,r)}|h(y)|\,d\omega(y).
\end{equation*}
Let $\mathcal M_{\mathrm{HL}}$ be the Hardy--Littlewood maximal operator associated with Euclidean balls and $d\omega$.
For every $r>0$, \eqref{eq:ovol} and the $G$-invariance of $d\omega$ imply
\begin{align*}
     \frac{1}{\omega(\calO(x,r))}\int_{\calO(x,r)}|h(y)|\,d\omega(y)
     &\le\sum_{\sigma\in G}\frac{1}{V(\sigma x,r)}
       \int_{B(\sigma x,r)}|h(y)|\,d\omega(y)  \le\sum_{\sigma\in G}\mathcal M_{\mathrm{HL}}h(\sigma x).
\end{align*}
Thus
$$
        \mathcal M_{\calO}h(x)
        \le\sum_{\sigma\in G}\mathcal M_{\mathrm{HL}}h(\sigma x).
$$
Since $d\omega$ is doubling and $G$ is finite, $\mathcal M_{\calO}$ is of weak type $(1,1)$ and bounded on $L^p(d\omega)$
for $1<p\le\infty$.

\begin{lemma}\label{lem:pmax}
There exists a structural constant $C_P>0$ such that, for every non-negative
$h\in L^1_{\mathrm{loc}}(\mathbb R^N,d\omega)$, $x,y\in\mathbb R^N$, and $t>0$ with $d(x,y)<t$,
$$
        P_th(y)\le C_P\mathcal M_{\calO}h(x).
$$
\end{lemma}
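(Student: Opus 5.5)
We decompose the Poisson integral into dyadic orbit annuli around $y$ and bound the kernel on each annulus using the size estimate \eqref{eq:psize}. Fix non-negative $h$, points $x,y$ with $d(x,y)<t$, and write
$$
P_th(y)=\int_{\calO(y,2t)}p_t(y,z)h(z)\,d\omega(z)+\sum_{k\ge1}\int_{A_k}p_t(y,z)h(z)\,d\omega(z),
\qquad A_k=\calO(y,2^{k+1}t)\setminus\calO(y,2^kt).
$$
(If $\mathcal M_{\calO}h(x)=\infty$ there is nothing to prove.)

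\textbf{Near region.} On $\calO(y,2t)$, \eqref{eq:psize} gives $p_t(y,z)\le C/V(y,t)$, since $V(y,z,t+d)\ge V(y,t)$ and $t/(t+d)\le1$.

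\textbf{Annuli.} On $A_k$ we have $d(y,z)\ge 2^kt$. Hence
$$
p_t(y,z)\le \frac{C}{V(y,2^kt)}\,2^{-k}.
$$
Each term is therefore bounded by
$$
\frac{C2^{-k}}{V(y,2^kt)}\int_{\calO(y,2^{k+1}t)}h\,d\omega .
$$

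\textbf{Recentering at $x$.} Since $d(x,y)<t$, the triangle inequality for the pseudometric $d$ gives $\calO(y,2^{k+1}t)\subset\calO(x,2^{k+2}t)$ for all $k\ge0$. By \eqref{eq:ovol} and doubling,
$$
\omega(\calO(x,2^{k+2}t))\le |G|\,V(x,2^{k+2}t)\lesssim V(x,2^kt)\simeq V(y,2^kt).
$$
The last comparability holds because $d(x,y)<t\le 2^kt$, using the estimate $V(x,r)\simeq V(y,r)$ proved just after \eqref{eq:ovol}. The same reasoning handles the near region, with $V(y,t)\simeq V(x,t)$ and $\omega(\calO(x,4t))\lesssim V(x,t)$. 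Each annular term is then at most $C2^{-k}\mathcal M_{\calO}h(x)$, and the near term is at most $C\mathcal M_{\calO}h(x)$.

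\textbf{Summation.} Summing the geometric series $\sum_k 2^{-k}$ gives $P_th(y)\le C_P\mathcal M_{\calO}h(x)$. The constant $C_P$ depends only on the kernel constant in \eqref{eq:psize}, the doubling constant and $|G|$.

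The main point requiring care is the comparison of volumes centered at $y$ with volumes of orbit balls centered at $x$. This is handled by the orbit-triangle inequality together with the $G$-invariance of $\omega$. For the kernel bound I only need the decay factor $t/(t+d)$ and the monotonicity $V(y,z,r)\ge V(y,r)$. The $1/V$ factor grows with the annulus, so even this weak decay $2^{-k}$ suffices to make the sum converge.
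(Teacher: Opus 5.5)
Your proof is correct and follows essentially the same route as the paper. You decompose into dyadic orbit annuli around $y$, use \eqref{eq:psize} to get a factor $2^{-k}$ against the volume at scale $2^kt$, and recenter at $x$ via the orbit triangle inequality and doubling. The only difference is cosmetic: you compare $\omega(\calO(x,2^{k+2}t))$ with $V(y,2^kt)$ through $V(x,r)\simeq V(y,r)$ for $d(x,y)<r$, whereas the paper sandwiches $\calO(y,2^{k+1}t)\subset\calO(x,3\cdot2^kt)\subset\calO(y,2^{k+2}t)$ and uses orbit-ball doubling.
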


\begin{proof}
Decompose $\mathbb R^N$ into the orbit annuli
$$
        A_0=\calO(y,2t),   \qquad
        A_k=\calO(y,2^{k+1}t)\setminus\calO(y,2^kt),    \quad k\ge1.
$$
For $z\in A_k$, $k\ge0$, we have $t+d(y,z)\simeq2^kt$.  Hence
$$
        \frac{t}{t+d(y,z)}\lesssim\frac{t}{2^kt}=2^{-k},
$$
while \eqref{eq:ovol} and doubling imply
\begin{align*}
     \omega(\calO(y,2^{k+1}t))   &\le |G|V(y,2^{k+1}t)   \lesssim V(y,2^kt)
     \le V(y,z,t+d(y,z)).
\end{align*}
Combining these estimates with \eqref{eq:psize}, we obtain
$$
        p_t(y,z)   \lesssim\frac{2^{-k}}{\omega(\calO(y,2^{k+1}t))},
        \qquad z\in A_k.
$$

Since $d(x,y)<t$, the triangle inequality implies
$$
     \calO(y,2^{k+1}t)   \subset\calO(x,(2^{k+1}+1)t)   \subset\calO(x,3\cdot2^kt)
     \subset\calO(y,(3\cdot2^k+1)t)    \subset\calO(y,2^{k+2}t).
$$
The doubling property of orbit balls therefore shows that
$$
     \omega(\calO(y,2^{k+1}t))   \le\omega(\calO(x,3\cdot2^kt))    \lesssim\omega(\calO(y,2^{k+1}t)).
$$
Consequently,
\begin{align*}
     P_th(y)   &=\sum_{k=0}^\infty\int_{A_k}p_t(y,z)h(z)\,d\omega(z)\\
     &\lesssim\sum_{k=0}^\infty   \frac{2^{-k}}{\omega(\calO(x,3\cdot2^kt))}
       \int_{\calO(x,3\cdot2^kt)}h(z)\,d\omega(z)\\
     &\le C\mathcal M_{\calO}h(x)\sum_{k=0}^\infty2^{-k}   \le C_P\mathcal M_{\calO}h(x).
\end{align*}
\end{proof}

\begin{corollary}\label{cor:nmax}
Fix $\beta>0$.  Let $f\in L^1_{\mathrm{loc}}(\mathbb R^N,d\omega)$ and assume that
$P_t|f|(y)<\infty$ for every $y\in\mathbb R^N$ and $t>0$.  Then there exists $C_\beta>0$, depending only on $\beta$
and the structural constants, such that
\begin{equation}\label{eq:nmax}
        \mathcal N_P^\beta f(x)\le C_\beta\mathcal M_{\calO}f(x),    \qquad x\in\mathbb R^N.
\end{equation}
The hypothesis holds for $f\in C_c^\infty(\mathbb R^N)$.  It also holds for $f=\one_F$ with $F$ measurable, since
$0\le P_t\one_F\le P_t1=1$.
\end{corollary}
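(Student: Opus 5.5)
The plan is to bound the Poisson integral of $|f|$ at a point of the aperture-$\beta$ orbit cone by the orbit maximal function at the vertex. This is done by reducing to Lemma~\ref{lem:pmax} after changing the aperture. Fix $x$, and let $(y,t)$ satisfy $d(x,y)<\beta t$. Since $P_t$ has a non-negative kernel, $|P_tf(y)|\le P_t|f|(y)$, and the right side is finite by hypothesis, so the integral defining $P_tf(y)$ converges absolutely. It therefore suffices to show $P_t|f|(y)\le C_\beta\,\mathcal M_{\calO}f(x)$, where $\mathcal M_{\calO}f=\mathcal M_{\calO}|f|$ by definition.

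If $\beta\le1$, then $d(x,y)<t$ and Lemma~\ref{lem:pmax} applies directly with $h=|f|$, giving $C_\beta=C_P$. If $\beta>1$, I would not try to compare $p_t$ with $p_{\beta t}$ through the semigroup. Instead I would rerun the proof of Lemma~\ref{lem:pmax} with $d(x,y)<\beta t$ in place of $d(x,y)<t$. The annular decomposition around $y$ and the kernel bound $p_t(y,z)\lesssim 2^{-k}/\omega(\calO(y,2^{k+1}t))$ on $A_k$ are unchanged, because they involve only $y$ and $t$.

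The one change is the nesting step. Now
$$\calO(y,2^{k+1}t)\subset\calO(x,(2^{k+1}+\beta)t)\subset\calO(x,(2+\beta)2^{k}t).$$
Doubling of orbit balls, together with $\calO(x,(2+\beta)2^kt)\subset\calO(y,(2+2\beta)2^kt)$, compares the measures of these balls up to a factor $C(1+\beta)^{\mathbf N}$. Summing $2^{-k}$ over $k$ then gives $P_t|f|(y)\le C_\beta\mathcal M_{\calO}f(x)$. Taking the supremum over the cone yields \eqref{eq:nmax}.

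There is a second route. Since $\beta t>d(x,y)$, one could apply Lemma~\ref{lem:pmax} at the comparison time $s=\beta t$, using the kernel bound $p_t(y,z)\lesssim\beta^{\mathbf N+1}p_{\beta t}(y,z)$. That bound would require a lower bound on the kernel, which is not available, so the rerun argument is the one I would use.

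Finally, the two stated cases satisfy the hypothesis. For $f\in C_c^\infty$, $|f|$ is bounded, so $P_t|f|\le\|f\|_\infty P_t1=\|f\|_\infty$. For $f=\one_F$, we have $0\le P_t\one_F\le P_t1=1$ by positivity of the kernel and $P_t1=1$. Local integrability is immediate in both cases.

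The only real obstacle is the aperture adjustment, and the rerun argument handles it cleanly, with $C_\beta$ growing polynomially in $\beta$.
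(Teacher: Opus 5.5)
Your proposal is correct and is essentially the paper's proof: you rerun the annular decomposition of Lemma~\ref{lem:pmax} around $y$ and change only the nesting step. The paper writes that step as $\calO(y,r_k)\subset\calO(x,r_k+\beta t)\subset\calO(y,r_k+2\beta t)$ with radius ratio at most $1+\beta$; your radius $(2+\beta)2^kt$ is an equivalent variant, and your separate treatment of $\beta\le1$ is harmless but unnecessary, since the rerun works for every $\beta>0$.
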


\begin{proof}
Fix $x\in\mathbb R^N$ and $(y,t)$ with $d(x,y)<\beta t$.  Let
$$
        A_0=\calO(y,2t),   \qquad
        A_k=\calO(y,2^{k+1}t)\setminus\calO(y,2^kt),   \quad k\ge1.
$$
The annular kernel estimate from Lemma~\ref{lem:pmax} remains valid:
$$
        p_t(y,z)   \lesssim\frac{2^{-k}}{\omega(\calO(y,2^{k+1}t))},
        \qquad z\in A_k,\quad k\ge0.
$$
Let $r_k=2^{k+1}t$.  The triangle inequality implies
$$
        \calO(y,r_k)    \subset\calO(x,r_k+\beta t)    \subset\calO(y,r_k+2\beta t).
$$
Since $(r_k+2\beta t)/r_k=1+\beta/2^k\le1+\beta$, orbit doubling shows that
$$
        \omega(\calO(y,r_k))   \le\omega(\calO(x,r_k+\beta t))
        \le\omega(\calO(y,r_k+2\beta t))    \lesssim_\beta\omega(\calO(y,r_k)).
$$
By positivity,
\begin{align*}
     |P_tf(y)|   &\le P_t|f|(y)   \lesssim\sum_{k=0}^\infty
       \frac{2^{-k}}{\omega(\calO(x,r_k+\beta t))}
       \int_{\calO(x,r_k+\beta t)}|f(z)|\,d\omega(z)\\
     &\le C_\beta\mathcal M_{\calO}f(x)\sum_{k=0}^\infty2^{-k}   \le C_\beta\mathcal M_{\calO}f(x).
\end{align*}
Taking the supremum over $d(x,y)<\beta t$ proves \eqref{eq:nmax}.
\end{proof}

\begin{lemma}\label{lem:ptail}
There exists a structural constant $C_{\mathrm{tail}}>0$ such that, for every measurable set
$E\subset\mathbb R^N$, $y\in\mathbb R^N$, $t>0$, and $\beta\ge1$,
$$
        d(y,E):=\inf_{z\in E}d(y,z)\ge\beta t    \quad\Longrightarrow\quad     P_t\one_E(y)\le C_{\mathrm{tail}}\beta^{-1}.
$$
\end{lemma}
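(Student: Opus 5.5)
The plan is to integrate the kernel bound \eqref{eq:psize} over $E$, which lies entirely at orbit distance at least $\beta t$ from $y$, using the same orbit-annulus decomposition as in Lemma~\ref{lem:pmax} but now starting at scale $\beta t$.

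First, if $E$ is empty or $\omega(E)=0$, then $P_t\one_E(y)=0$ and there is nothing to prove. Otherwise we have $E\subset\{z:d(y,z)\ge\beta t\}$, so
$$
E\subset\bigcup_{k\ge0}A_k,\qquad A_k:=\calO(y,2^{k+1}\beta t)\setminus\calO(y,2^k\beta t).
$$

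Second, fix $z\in A_k$. Then $t+d(y,z)\ge 2^k\beta t$, which gives
$$
\frac{t}{t+d(y,z)}\le \frac{2^{-k}}{\beta}.
$$
For the volume factor we use $V(y,z,t+d(y,z))\ge V(y,2^k\beta t)$. Combining \eqref{eq:ovol} with doubling gives $V(y,2^k\beta t)\gtrsim \omega(\calO(y,2^{k+1}\beta t))$, with a constant depending only on $|G|$ and the doubling constant. Inserting both bounds into \eqref{eq:psize} yields
$$
p_t(y,z)\lesssim \frac{\beta^{-1}2^{-k}}{\omega(\calO(y,2^{k+1}\beta t))},\qquad z\in A_k.
$$

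Third, we sum over the annuli. Since $\omega(A_k)\le\omega(\calO(y,2^{k+1}\beta t))$,
$$
P_t\one_E(y)\le\sum_{k\ge0}\int_{A_k}p_t(y,z)\,d\omega(z)\lesssim\beta^{-1}\sum_{k\ge0}2^{-k}\lesssim\beta^{-1}.
$$
The constant $C_{\mathrm{tail}}$ produced this way depends only on $C$ in \eqref{eq:psize}, on $|G|$, and on the doubling constant. In particular it is independent of $\beta\ge1$, of $E$, and of $t$.

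No step is a real obstacle. The only point needing care is that the volume comparison is made at the annulus scale $2^k\beta t$ rather than at scale $t$. Making it at scale $t$ would introduce $\beta$-dependent doubling losses, which the estimate cannot absorb. Measurability of $E$ and the nonnegativity of $p_t$ justify writing the integral as a sum over the annuli, by monotone convergence.
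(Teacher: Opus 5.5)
Your proposal is correct and follows the paper's proof essentially step for step. It uses the same orbit annuli at scales $2^k\beta t$, the same bound $t/(t+d(y,z))\le(2^k\beta)^{-1}$, the volume comparison at the annulus scale via \eqref{eq:ovol} and doubling, and the geometric summation in $k$; since you only use lower bounds on $t+d(y,z)$, your version also makes clear that the hypothesis $\beta\ge1$ is not actually needed for these estimates.
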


\begin{proof}
Let
$$
        A_k:=\{z:2^k\beta t\le d(y,z)<2^{k+1}\beta t\},
        \qquad k\ge0.
$$
Then $E\subset\bigcup_{k\ge0}A_k$.  For $z\in A_k$, the condition $\beta\ge1$ implies
$t+d(y,z)\simeq2^k\beta t$.  Consequently,
$$
        \frac{t}{t+d(y,z)}\lesssim\frac{t}{2^k\beta t}=(2^k\beta)^{-1}.
$$
At the scale $2^k\beta t$, the volume comparison used in Lemma~\ref{lem:pmax} shows
\begin{align*}
     \omega(\calO(y,2^{k+1}\beta t))  &\le |G|V(y,2^{k+1}\beta t)
     \lesssim V(y,2^k\beta t)   \le V(y,z,t+d(y,z)).
\end{align*}
Combining the preceding volume bound with \eqref{eq:psize}, we obtain
\begin{align*}
     P_t\one_E(y)   &=\sum_{k=0}^\infty\int_{E\cap A_k}p_t(y,z)\,d\omega(z)
     \le C\sum_{k=0}^\infty(2^k\beta)^{-1}  \frac{\omega(E\cap A_k)}{\omega(\calO(y,2^{k+1}\beta t))}\\
     &\le C\beta^{-1}\sum_{k=0}^\infty2^{-k}    \le C_{\mathrm{tail}}\beta^{-1}.
\end{align*}
\end{proof}

\section{Invariant localization over the good set}\label{sec:loc}

Fix $f\in C_c^\infty(\mathbb R^N)$ and $\lambda>0$.  We construct a $G$-invariant cut-off that equals one on the orbit
tent over a thick subset of the good set and vanishes outside a larger tent over the good set.  This converts the
conical energy into a localized integral on the upper half-space.

\subsection{The full-space orbit objects}

For $\beta>0$, define the orbit non-tangential maximal function by
$$
        \mathcal N_P^\beta f(x)    :=\sup_{\substack{y\in\mathbb R^N,\,t>0\\ d(x,y)<\beta t}}|P_tf(y)|.
$$
The intrinsic Dunkl area function is
\begin{equation}\label{eq:area}
        \mathcal S_Pf(x)   :=\left(   \int_0^\infty\int_{d(x,y)<t}    \Gamma_{\kappa,t}(P_tf)(y,t)\,
        \frac{t\,d\omega(y)\,dt}{V(x,t)}    \right)^{1/2}.
\end{equation}
Its integrand is the full space-time Dunkl energy:
\begin{align*}
     \Gamma_{\kappa,t}(P_tf)(y,t)   &=|\partial_tP_tf(y)|^2+|\nabla_yP_tf(y)|^2
      +\sum_{\alpha\in R_+}\kappa(\alpha)  \frac{|P_tf(y)-P_tf(\sigma_\alpha y)|^2}
     {\langle\alpha,y\rangle^2}.
\end{align*}
The last sum is the reflection energy and must be retained for general $f$.

\subsection{The invariant good set and its Poisson extension}

Let $u(y,t):=P_tf(y)$.  Let $\beta>1$, and define
$$
        E=E_\beta(\lambda)   :=\{x\in\mathbb R^N:\mathcal N_P^\beta f(x)\le\lambda\}.
$$
All objects built from $E_\beta(\lambda)$ below depend on $\beta$, which will be fixed after Lemma~\ref{lem:vlo}.
For every $\sigma\in G$, the $G$-invariance of the orbit distance implies
\begin{align*}
     \mathcal N_P^\beta f(\sigma x)
     &=\sup_{\substack{y\in\mathbb R^N,\,t>0\\ d(\sigma x,y)<\beta t}}
       |P_tf(y)|  =\sup_{\substack{y\in\mathbb R^N,\,t>0\\ d(x,y)<\beta t}}
       |P_tf(y)|   =\mathcal N_P^\beta f(x).
\end{align*}
Hence $E$ is $G$-invariant.  Moreover, for every $a\in\mathbb R$,
$$
        \left\{x:\mathcal N_P^\beta f(x)>a\right\}
        =\bigcup_{\substack{y\in\mathbb R^N,\,t>0\\ |P_tf(y)|>a}}    \calO(y,\beta t),
$$
which is open.  Thus $\mathcal N_P^\beta f$ is lower semicontinuous, $E$ is closed, and $E^c$ is open.

By Corollary~\ref{cor:nmax} and the weak type $(1,1)$ estimate for $\mathcal M_{\calO}$, we have
\begin{align}\label{eq:bad}
     \|\one_{E^c}\|_{L^2(d\omega)}^2  =\omega(E^c)
     &=\omega\{x:\mathcal N_P^\beta f(x)>\lambda\}
     \le\omega\{x:C_\beta\mathcal M_{\calO}f(x)>\lambda\}
     \le\frac{C_\beta}{\lambda}\|f\|_{L^1(d\omega)}<\infty.
\end{align}

Let
$$
        g:=\one_E,   \qquad    v(y,t):=P_tg(y).
$$
Since $E$ is Borel and $G$-invariant, $g$ is a bounded, $G$-invariant Borel function.  Lemma~\ref{lem:peq} implies
$$
        v(\sigma y,t)   =(P_tg)(\sigma y)   =P_t(g\circ\sigma)(y)
        =P_tg(y)   =v(y,t),     \qquad \sigma\in G.
$$
By positivity and conservation, we have
\begin{equation}\label{eq:vbad}
        0\le v(y,t)   =P_t\one_E(y)    =1-P_t\one_{E^c}(y)  \le1.
\end{equation}
Every scalar function of $v$ is therefore $G$-invariant in $y$.
By \eqref{eq:bad}, $\one_{E^c}\in L^2(d\omega)$; hence \eqref{eq:lp2} and \eqref{eq:lpg} apply to
$P_t\one_{E^c}$.

\subsection{The auxiliary thick good set}

Enlarge the structural constant $C_P$ in Lemma~\ref{lem:pmax}, if necessary, so that $C_P\ge1$.  Define
\begin{equation}\label{eq:aset}
        A=A_\beta(\lambda)    :=\left\{x\in\mathbb R^N:
        \mathcal M_{\calO}(\one_{E^c})(x)\le\frac1{20C_P}\right\}.
\end{equation}
Since $\calO(\sigma x,r)=\calO(x,r)$, the set $A$ is $G$-invariant.  By \eqref{eq:bad} and the weak type $(1,1)$
estimate for $\mathcal M_{\calO}$, we have
\begin{align*}
     \omega(A^c)  &=\omega\left\{x:\mathcal M_{\calO}(\one_{E^c})(x)>\frac1{20C_P}\right\}
     \le C\|\one_{E^c}\|_{L^1(d\omega)}   =C\omega(E^c).
\end{align*}

Moreover, $E^c\subset A^c$.  Indeed, if $x\in E^c$, the openness and $G$-invariance of $E^c$ ensure that some $r>0$
satisfies
$$
        B(\sigma x,r)=\sigma B(x,r)\subset E^c,   \qquad \sigma\in G.
$$
Consequently, $\calO(x,r)=\bigcup_{\sigma\in G}B(\sigma x,r)\subset E^c$, and
\begin{align*}
     \mathcal M_{\calO}(\one_{E^c})(x)  &\ge\frac1{\omega(\calO(x,r))}
       \int_{\calO(x,r)}\one_{E^c}\,d\omega  =1>\frac1{20C_P},
\end{align*}
so $x\in A^c$.  Thus $A\subset E$, and Chebyshev's inequality shows that
\begin{align}\label{eq:split}
     \omega\{x:\mathcal S_Pf(x)>\lambda\}
     &\le\omega(A^c)+\omega\{x\in A:\mathcal S_Pf(x)>\lambda\}\notag\\
     &\le C\omega(E^c)+\frac1{\lambda^2}\int_A\mathcal S_Pf(x)^2\,d\omega(x).
\end{align}
Thus the reduction is complete once the last integral is estimated.

\subsection{The two regions in the upper half-space}

Define the orbit tent
$$
        W:=\bigcup_{x\in A}\{(y,t)\in\mathbb R^{N+1}_+:d(x,y)<t\}.
$$
The enlarged tent is
$$
        \widetilde W:=\bigcup_{x\in E}\{(y,t)\in\mathbb R^{N+1}_+:d(x,y)<\beta t\}.
$$
Since $A\subset E$ and $\beta>1$, we have $W\subset\widetilde W$.  For $t>0$, let
$$
        W_t:=\{y\in\mathbb R^N:(y,t)\in W\}.
$$

\begin{lemma}\label{lem:vhi}
For every $(y,t)\in W$, we have
$$
        v(y,t)\ge\frac{19}{20}.
$$
\end{lemma}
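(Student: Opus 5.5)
Since $(y,t)\in W$, there is a point $x\in A$ with $d(x,y)<t$. We use the identity \eqref{eq:vbad}, $v(y,t)=1-P_t\one_{E^c}(y)$, so it suffices to show $P_t\one_{E^c}(y)\le\frac1{20}$. This reduces the lower bound on $v$ to an upper bound on the Poisson extension of the bad set's indicator at a point of the cone over $x$.

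Apply Lemma~\ref{lem:pmax} with the non-negative function $h=\one_{E^c}\in L^1_{\mathrm{loc}}(d\omega)$ and the pair $x,y$ with $d(x,y)<t$. This gives
$$
    P_t\one_{E^c}(y)\le C_P\,\mathcal M_{\calO}(\one_{E^c})(x).
$$
Since $x\in A$, the definition \eqref{eq:aset} yields $\mathcal M_{\calO}(\one_{E^c})(x)\le 1/(20C_P)$. Therefore $P_t\one_{E^c}(y)\le\frac1{20}$, and hence $v(y,t)\ge\frac{19}{20}$.

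There is no real obstacle. The only points to check are that $P_t\one_{E^c}$ is well defined by the kernel formula for the bounded Borel function $\one_{E^c}$, which holds because $E$ is closed, and that conservation $P_t1=1$ justifies \eqref{eq:vbad}. Both were established earlier. The enlargement $C_P\ge1$ plays no role here beyond the definition of $A$.
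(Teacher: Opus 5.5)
Your proof is correct and is essentially the paper's argument. It picks $x\in A$ with $d(x,y)<t$, writes $v=1-P_t\one_{E^c}$ via \eqref{eq:vbad}, and bounds $P_t\one_{E^c}(y)\le C_P\mathcal M_{\calO}(\one_{E^c})(x)\le 1/20$ using Lemma~\ref{lem:pmax} and \eqref{eq:aset}.
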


\begin{proof}
Choose $x\in A$ with $d(x,y)<t$.  Then Lemma~\ref{lem:pmax}, \eqref{eq:aset}, and \eqref{eq:vbad} imply
\begin{align*}
     v(y,t)   &=1-P_t\one_{E^c}(y)   \ge1-C_P\mathcal M_{\calO}(\one_{E^c})(x)
     \ge\frac{19}{20}.
\end{align*}
\end{proof}

Let $C_{\mathrm{tail}}$ be the constant in Lemma~\ref{lem:ptail}, and let
$$
        c_2:=\frac1{40},   \qquad    \beta_0:=\max\left\{2,\frac{2C_{\mathrm{tail}}}{c_2}\right\}.
$$

\begin{lemma}\label{lem:vlo}
Whenever $\beta\ge\beta_0$, the objects corresponding to this value of $\beta$ satisfy
$$
        (y,t)\notin\widetilde W   \quad\Longrightarrow\quad     v(y,t)<c_2.
$$
\end{lemma}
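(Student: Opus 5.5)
The plan is to show that if $(y,t)\notin\widetilde W$, then $y$ is orbit-far from $E$ at scale $\beta t$, and then to apply the tail bound of Lemma~\ref{lem:ptail} to $v=P_t\one_E$.

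First, unwind the definition of $\widetilde W$. The point $(y,t)\notin\widetilde W$ means exactly that $d(x,y)\ge\beta t$ for every $x\in E$. Since $d$ is symmetric, this gives
$$
    d(y,E)=\inf_{z\in E}d(y,z)\ge\beta t.
$$
If $E=\emptyset$, then $v\equiv0<c_2$ trivially, so we may assume $E\neq\emptyset$. The set $E$ is closed, hence Borel measurable, and $\beta\ge\beta_0\ge2\ge1$. Lemma~\ref{lem:ptail} therefore applies with this $\beta$ and yields
$$
    v(y,t)=P_t\one_E(y)\le C_{\mathrm{tail}}\beta^{-1}\le C_{\mathrm{tail}}\beta_0^{-1}\le \frac{c_2}{2}<c_2,
$$
where the third inequality uses $\beta_0\ge 2C_{\mathrm{tail}}/c_2$.

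The only points needing care are minor. One must note that $\widetilde W$ and $E$ are built from the same $\beta$ to which Lemma~\ref{lem:ptail} is applied; the lemma holds for every measurable set and every $\beta\ge1$ with a constant independent of $\beta$, so this causes no difficulty. One must also check that the infimum condition in Lemma~\ref{lem:ptail} is a non-strict inequality, which matches the non-strict condition $d(x,y)\ge\beta t$ obtained from the complement of the open cones. The strict conclusion $v<c_2$ then comes from the factor $1/2$ built into $\beta_0$.

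I do not expect a genuine obstacle here. The substantive work, namely the annular decay of the kernel bound \eqref{eq:psize}, was already carried out in Lemma~\ref{lem:ptail}.
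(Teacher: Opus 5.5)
Your proposal is correct and follows the paper's proof essentially verbatim: you deduce $d(y,E)\ge\beta t$ from $(y,t)\notin\widetilde W$, then apply Lemma~\ref{lem:ptail} to get $v(y,t)\le C_{\mathrm{tail}}\beta^{-1}\le c_2/2<c_2$ via the choice of $\beta_0$. Your separate treatment of $E=\emptyset$ is harmless but unnecessary, since Lemma~\ref{lem:ptail} already covers that case ($P_t\one_\emptyset=0$).
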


\begin{proof}
If $(y,t)\notin\widetilde W$, then $d(y,E)\ge\beta t$.  Since $\beta\ge\beta_0$, Lemma~\ref{lem:ptail} ensures that
\begin{align*}
     v(y,t)   &=P_t\one_E(y)   \le\frac{C_{\mathrm{tail}}}{\beta}
     \le\frac{c_2}{2}<c_2.
\end{align*}
\end{proof}

Fix one $\beta\ge\beta_0$ and henceforth use the corresponding sets $E$, $A$, $W$, and $\widetilde W$ and functions $g$
and $v$.  Choose
$$
        c_2<c_1<c_3<\frac{19}{20}
$$
and a smooth cut-off $\varphi\in C^\infty(\mathbb R)$ such that
$$
        0\le\varphi\le1,    \qquad    \varphi(s)=0 \quad \text{for }s\le c_1,
        \qquad    \varphi(s)=1 \quad \text{for }s\ge c_3.
$$
Then $\varphi(0)=0$, $\varphi(1)=1$, and
$$
        \operatorname{supp}\varphi'\cup\operatorname{supp}\varphi''
        \subset[c_1,c_3]\Subset(c_2,19/20).
$$
Lemmas~\ref{lem:vhi} and~\ref{lem:vlo} imply
\begin{align*}
     (y,t)\in W   &\Longrightarrow v(y,t)\ge\frac{19}{20}>c_3   \Longrightarrow\varphi(v(y,t))=1,\\
     (y,t)\notin\widetilde W   &\Longrightarrow v(y,t)<c_2<c_1
     \Longrightarrow\varphi(v(y,t))=\varphi'(v(y,t))=\varphi''(v(y,t))=0.
\end{align*}
Consequently,
$$
     \{(y,t)\in\mathbb R^{N+1}_+:\varphi'(v(y,t))\ne0\text{ or }\varphi''(v(y,t))\ne0\}
     \subset\widetilde W\setminus W.
$$

\subsection{Reduction to a global energy integral}

If $x\in\calO(y,t)$, then $V(x,t)\simeq V(y,t)$, while \eqref{eq:ovol} implies
$\omega(\calO(y,t))\lesssim V(y,t)$.  The integrand in \eqref{eq:area} is non-negative, so Tonelli's theorem applies.
Since $\varphi(v)=1$ on $W$,
\begin{align}\label{eq:glob}
     \int_A\mathcal S_Pf(x)^2\,d\omega(x)  &=\int_0^\infty\int_{W_t}\Gamma_{\kappa,t}(u)(y,t)\,t
       \left(\int_{A\cap\calO(y,t)}\frac{d\omega(x)}{V(x,t)}\right)d\omega(y)\,dt\\
     &\le C\int_0^\infty\int_{W_t}\Gamma_{\kappa,t}(u)(y,t)\,t
       \frac{\omega(\calO(y,t))}{V(y,t)}\,d\omega(y)\,dt\notag\\
     &\le C\int_0^\infty\int_{W_t}\Gamma_{\kappa,t}(u)(y,t)\,t\,d\omega(y)\,dt\notag\\
     &=C\int_0^\infty\int_{W_t}  \Gamma_{\kappa,t}(u)(y,t)\varphi(v(y,t))^2\,t\,d\omega(y)\,dt\notag\\
     &\le C\int_0^\infty\int_{\mathbb R^N}  \Gamma_{\kappa,t}(u)(y,t)\varphi(v(y,t))^2\,t\,d\omega(y)\,dt.\notag
\end{align}
The localized energy estimate below controls the last integral.

\section{The localized energy estimate}\label{sec:eng}

Retain $u(y,t)=P_tf(y)$ and $v(y,t)=P_t\one_E(y)$, and let $a:=\varphi(v)$.
The $G$-invariance of $v$ makes $a$ $G$-invariant in $y$; hence the reflection part of
$\Gamma_{\kappa,t}(ua)$ is $a^2$ times that of $\Gamma_{\kappa,t}(u)$.

Choose $\Psi\in C_c^\infty((c_2,19/20))$ such that
$$
        0\le\Psi\le1,   \qquad     \Psi=1\quad\text{on a neighborhood of }[c_1,c_3].
$$
Since $\operatorname{supp}\varphi'\cup\operatorname{supp}\varphi''\subset[c_1,c_3]$ and $\Psi=1$ near
$[c_1,c_3]$, there exists $C_\varphi>0$ such that
\begin{equation}\label{eq:psi}
        |\varphi'(s)|^2+|\varphi(s)\varphi''(s)|    \le C_\varphi\Psi(s)^2,    \qquad s\in\mathbb R.
\end{equation}

\begin{lemma}
There exists a constant $C>0$, depending only on the underlying Dunkl structure and the fixed cut-offs $\varphi$ and
$\Psi$, but independent of $f$ and $\lambda$, such that
\begin{align}\label{eq:loc}
     &\int_0^\infty\int_{\mathbb R^N}   a(y,t)^2\Gamma_{\kappa,t}(u)(y,t)\,t\,d\omega(y)\,dt\\
     \le\,& C\int_E|f(y)|^2\,d\omega(y)   +C\int_0^\infty\int_{\mathbb R^N}
      |u(y,t)|^2\Psi(v(y,t))^2|\nabla_{y,t}v(y,t)|^2\,t\,d\omega(y)\,dt.\notag
\end{align}
\end{lemma}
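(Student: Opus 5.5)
We apply the energy identity \eqref{eq:gsq} to $u$, which satisfies $\Delta_{\kappa,t}u=0$, and pair it with the $G$-invariant weight $a^2t$. Pointwise \eqref{eq:gsq} gives $2\Gamma_{\kappa,t}(u)=\Delta_{\kappa,t}|u|^2$. Hence
$$
2\int_0^\infty\int_{\mathbb R^N}a^2\Gamma_{\kappa,t}(u)\,t\,d\omega\,dt
=\int_0^\infty\int_{\mathbb R^N}a^2\,t\,\Delta_{\kappa,t}|u|^2\,d\omega\,dt .
$$

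\textbf{Integration by parts.} We move $\Delta_{\kappa,t}$ onto $a^2t$. In $y$ we use the Dunkl integration-by-parts formula, i.e. the symmetry of $\Delta_\kappa$ against $d\omega$. In $t$ we integrate by parts twice. Because $a^2t$ is $G$-invariant in $y$, \eqref{eq:chain} and \eqref{eq:prodt} give
$$
\Delta_{\kappa,t}(a^2t)=t\,\Delta_{\kappa,t}(a^2)+4a\,\partial_ta .
$$
Expanding the chain rule,
$$
\Delta_{\kappa,t}a^2=2a\varphi'(v)\Delta_{\kappa,t}v+2\bigl(\varphi'(v)^2+\varphi(v)\varphi''(v)\bigr)|\nabla_{y,t}v|^2 .
$$
Since $v$ is Dunkl-harmonic ($P_t\one_E$ with bounded data), the first term vanishes. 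By \eqref{eq:psi} the second is bounded by $C\Psi(v)^2|\nabla_{y,t}v|^2$. After multiplication by $|u|^2t$ this gives exactly the second term on the right of \eqref{eq:loc}.

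\textbf{The term $4a\,\partial_t a\,|u|^2$.} We do not integrate this term by parts in $t$ again. Instead we keep it as $\partial_t|u|^2\cdot a^2$, which is the one-derivative form arising from $\partial_t^2$ applied to $t$ alone. We write the $t$-integration as
$$
\int t\,a^2\partial_t^2|u|^2=-\int\partial_t(ta^2)\,\partial_t|u|^2 .
$$
The piece $-\int a^2\partial_t|u|^2$ integrates to the boundary trace $a(y,0)^2|f(y)|^2$ at $t=0$. The piece $-\int 2ta\,\partial_t a\,\partial_t|u|^2$ is bounded by
$$
4\int t\,a|\varphi'(v)||\nabla v|\,|u|\,|\nabla u| .
$$
By Cauchy--Schwarz this is at most
$$
\tfrac12\int a^2t|\nabla u|^2+C\int t|u|^2\Psi(v)^2|\nabla v|^2 ,
$$
and the first half is absorbed into the left side, because $|\nabla u|^2\le\Gamma_{\kappa,t}(u)$. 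The spatial part is handled in the same spirit. We use the symmetric form
$$
\int a^2\Delta_\kappa|u|^2\,d\omega=-\int 2a\nabla a\cdot\nabla|u|^2\,d\omega ,
$$
which is valid because $a$ is $G$-invariant, so the reflection parts cancel. This again produces a cross term that is absorbed by Cauchy--Schwarz.

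\textbf{Boundary trace at $t=0$.} The trace term is $\int a(y,0^+)^2|f|^2\,d\omega$. As $t\to0$, $v(\cdot,t)=P_t\one_E\to\one_E$ almost everywhere, so $a(y,t)\to\varphi(\one_E(y))=\one_E(y)$, using $\varphi(0)=0$ and $\varphi(1)=1$. The trace is therefore $\int_E|f|^2\,d\omega$.

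\textbf{Main obstacle.} The hard step is justifying the integrations by parts: the decay at $t\to\infty$ and $\|y\|\to\infty$, and the limit $t\to0$. We would handle this by inserting truncations. In space we use $\eta_R(y)$ as in Lemma~\ref{lem:lp}; it is radial, hence $G$-invariant, so no wall terms appear. In time we restrict to $t\in[\eps,1/\eps]$. The error terms are then controlled as follows:
\begin{itemize}
\item $|u|\lesssim\|f\|_\infty$, together with $|\nabla u|\lesssim t^{-1}$ times maximal bounds and $f\in L^2$, controls the errors from $\eta_R$, which vanish as $R\to\infty$.
\item At $t=1/\eps$, the boundary terms $t\,a^2\partial_t|u|^2$ and $a^2|u|^2$ are $O(\|P_tf\|_\infty^2)\to0$, because $f\in L^1$ and $V(y,t)\to\infty$.
\item At $t=\eps$, continuity of $P_tf\to f$ for smooth $f$, together with dominated convergence for $a^2$, gives the trace $\int_E|f|^2$. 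The boundary term $\eps\,a^2\partial_t|u|^2$ vanishes because $\partial_tP_tf$ is bounded for $f\in C_c^\infty$.
\end{itemize}

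The pointwise convergence $a\to\one_E$ at the boundary needs care because $E$ is only closed. It holds at almost every point by the Lebesgue differentiation theorem for Poisson extensions, which follows from Lemma~\ref{lem:pmax}. Since $0\le a\le1$, dominated convergence then suffices.
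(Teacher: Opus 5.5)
There is a genuine gap in your treatment of the time variable. After writing $\int_0^\infty t\,a^2\,\partial_t^2|u|^2\,dt=-\int_0^\infty\partial_t(ta^2)\,\partial_t|u|^2\,dt$, you assert that the piece $-\int_0^\infty a^2\,\partial_t|u|^2\,dt$ integrates to the trace $a(y,0^+)^2|f(y)|^2$. That holds only if $a$ is independent of $t$. Since $a=\varphi(P_t\one_E)$,
\[
-\int_0^\infty a^2\,\partial_t|u|^2\,dt=a(y,0^+)^2|f(y)|^2+\int_0^\infty\partial_t(a^2)\,|u|^2\,dt .
\]
The last term is precisely the unweighted term $\int_0^\infty\!\int 4a\,\partial_ta\,|u|^2\,d\omega\,dt$ that you set aside after noticing $\Delta_{\kappa,t}(a^2t)=t\Delta_{\kappa,t}(a^2)+4a\,\partial_ta$. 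It has been renamed, not removed. Integrating $-\int a^2\partial_t|u|^2$ by parts against $\partial_t t=1$ is circular, because it reproduces $\int t\,a^2\partial_t^2|u|^2\,dt$.

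This term has no factor $t$. So neither Cauchy--Schwarz against $\int\!\!\int|u|^2\Psi(v)^2|\nabla_{y,t}v|^2\,t$ nor \eqref{eq:lpg} controls it. On the support of $\varphi'(v)$ it is bounded only by $\lambda^2$ times the $y$-integral of the total variation of $t\mapsto\varphi(v(y,t))$. That is an $L^1$-type quantity, and the $L^2$ Littlewood--Paley estimate says nothing about it. Your spatial step (the symmetric form, valid because $a$ is $G$-invariant) and your identification of the trace as $\int_E|f|^2$ are fine.

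The repair is to apply Green's identity to the whole product with the weight $t$ alone, instead of splitting $\Delta_{\kappa,t}$ between $|u|^2$ and $a^2t$.

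1. Set $H(t)=\int a^2|u|^2\,d\omega$. Since $H(\infty)=0$ and $tH'(t)\to0$ at both ends, $\int_0^\infty\int t\,\Delta_{\kappa,t}(a^2|u|^2)\,d\omega\,dt=H(0^+)=\int_E|f|^2\,d\omega$.
2. By \eqref{eq:prodt} with $A=a^2$, together with \eqref{eq:gsq}, $\Delta_{\kappa,t}(a^2|u|^2)=2a^2\Gamma_{\kappa,t}(u)+|u|^2\Delta_{\kappa,t}(a^2)+2\nabla_{y,t}(a^2)\cdot\nabla_{y,t}|u|^2$.
3. Both error terms now carry the factor $t$. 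The full operator $\Delta_{\kappa,t}(a^2)$ appears, so $\Delta_{\kappa,t}v=0$ and \eqref{eq:psi} apply, and your absorption argument finishes.

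This is the paper's argument, written in the form $h=ua$. It uses $a^2\Gamma_{\kappa,t}(u)\le2\Gamma_{\kappa,t}(h)+2|u|^2|\nabla_{y,t}a|^2$, then \eqref{eq:gsq} for $h$, and $\Delta_{\kappa,t}h=u\varphi''(v)|\nabla_{y,t}v|^2+2\varphi'(v)\nabla_{y,t}u\cdot\nabla_{y,t}v$.

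The paper also first proves the estimate with a smooth $G$-invariant $b_0$ in place of $\one_{E^c}$. This makes $v$ smooth and $\Delta_{\kappa,t}$-harmonic, with $h\in C^1([\eps,T];L^2(d\omega))$. Only then does it pass to $\one_{E^c}$ by $L^2$ approximation and \eqref{eq:lpg}. Working directly with $P_t\one_E$, as you do, requires those facts to be proved separately. Your decay claims at $t\to\infty$ also need $L^2$ bounds, such as $t\|\partial_tu\|_2\|u\|_2\le e^{-1}\|f\|_2\|P_tf\|_2\to0$, rather than $\|P_tf\|_\infty\to0$.
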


\begin{proof}
We first prove a uniform estimate for smooth invariant data, remove the spatial truncation, and compute the temporal
traces.  We then pass to $\one_{E^c}$ by $L^2$ approximation and \eqref{eq:lpg}.

\medskip
\noindent\textit{Step 1: smooth invariant data.}
Let $b_0\in C_c^\infty(\mathbb R^N)$ be real-valued and $G$-invariant, with $0\le b_0\le1$, and let
$$
        v_0:=1-P_tb_0,    \qquad    a_0:=\varphi(v_0),   \qquad     h_0:=ua_0.
$$
We prove
\begin{align}\label{eq:loc0}
     &\int_0^\infty\int_{\mathbb R^N}    a_0^2\Gamma_{\kappa,t}(u)\,t\,d\omega\,dt\\
     \le\,& C\int_{\mathbb R^N}|f|^2\varphi(1-b_0)^2\,d\omega
      +C\int_0^\infty\int_{\mathbb R^N}  |u|^2\Psi(v_0)^2|\nabla_{y,t}v_0|^2\,t\,d\omega\,dt,\notag
\end{align}
with $C$ independent of $b_0$.  In Steps 1--3, write $b,v,a,h$ for $b_0,v_0,a_0,h_0$.
Positivity and conservation imply $0\le v\le1$.  Lemma~\ref{lem:peq} shows that
$v(\sigma y,t)=v(y,t)$; hence $a=\varphi(v)$ is real-valued and $G$-invariant.
Independently, Poisson harmonicity ensures that $\Delta_{\kappa,t}u=\Delta_{\kappa,t}v=0$.

For the reflection differences,
$$
        h(y,t)-h(\sigma_\alpha y,t)    =a(y,t)\bigl(u(y,t)-u(\sigma_\alpha y,t)\bigr).
$$
For the ordinary gradient, $a\nabla_{y,t}u=\nabla_{y,t}h-u\nabla_{y,t}a$.  Therefore
\begin{align*}
     a^2\Gamma_{\kappa,t}(u)   &=a^2|\nabla_{y,t}u|^2   +a^2\bigl(\Gamma_{\kappa,t}(u)-|\nabla_{y,t}u|^2\bigr)\\
     &\le2|\nabla_{y,t}h|^2+2|u|^2|\nabla_{y,t}a|^2   +a^2\bigl(\Gamma_{\kappa,t}(u)-|\nabla_{y,t}u|^2\bigr)\\
     &\le2\Gamma_{\kappa,t}(h)+2|u|^2|\nabla_{y,t}a|^2.
\end{align*}
The square identity \eqref{eq:gsq} shows
$$
     \Gamma_{\kappa,t}(h)  =\frac12\Delta_{\kappa,t}|h|^2   -\Real\bigl(\overline h\,\Delta_{\kappa,t}h\bigr).
$$
Since $\Delta_{\kappa,t}u=\Delta_{\kappa,t}v=0$ and $a=\varphi(v)$ is $G$-invariant,
\eqref{eq:prodt} and \eqref{eq:chain} imply
\begin{align*}
     \Delta_{\kappa,t}h  &=u\Delta_{\kappa,t}a+2\nabla_{y,t}u\cdot\nabla_{y,t}a\\
     &=u\varphi''(v)|\nabla_{y,t}v|^2    +2\varphi'(v)\nabla_{y,t}u\cdot\nabla_{y,t}v.
\end{align*}
For $\delta>0$, Young's inequality and \eqref{eq:psi} imply
\begin{align*}
     \big|\Real(\overline h\,\Delta_{\kappa,t}h)\big|   &\le2|ua||\nabla_{y,t}u||\varphi'(v)||\nabla_{y,t}v|
       +|u|^2|\varphi(v)\varphi''(v)||\nabla_{y,t}v|^2\\
     &\le\delta a^2\Gamma_{\kappa,t}(u)   +C_\delta|u|^2\Psi(v)^2|\nabla_{y,t}v|^2.
\end{align*}
The remaining gradient term is estimated separately:
\begin{align*}
     2|u|^2|\nabla_{y,t}a|^2   &=2|u|^2|\varphi'(v)|^2|\nabla_{y,t}v|^2  \le C|u|^2\Psi(v)^2|\nabla_{y,t}v|^2.
\end{align*}

Choose $\eta$ with the same properties as in the proof of Lemma~\ref{lem:lp}, and let
$\eta_R(y):=\eta(\|y\|^2/R^2)$.  Then $0\le\eta_R\uparrow1$,
$\operatorname{supp}\eta_R\subset\overline{B(0,2R)}$, and \eqref{eq:cut} remains valid.
For $0<\eps<T$ and $R>0$, define
$$
     I_{R,\eps,T}   :=\int_\eps^T\int_{\mathbb R^N}   \eta_Ra^2\Gamma_{\kappa,t}(u)\,t\,d\omega\,dt.
$$
The truncated Laplacian term is
$$
     J_{R,\eps,T}  :=\int_\eps^T\int_{\mathbb R^N}   \eta_R\Delta_{\kappa,t}|h|^2\,t\,d\omega\,dt.
$$
The error term is
$$
     K_{R,\eps,T}  :=\int_\eps^T\int_{\mathbb R^N}
       \eta_R|u|^2\Psi(v)^2|\nabla_{y,t}v|^2\,t\,d\omega\,dt.
$$
After multiplication by $\eta_Rt$ and integration over $\mathbb R^N\times(\eps,T)$,
\begin{align*}
     I_{R,\eps,T}  &\le J_{R,\eps,T}    +2\int_\eps^T\int_{\mathbb R^N}
        \eta_R\big|\Real(\overline h\,\Delta_{\kappa,t}h)\big|\,t\,d\omega\,dt\\
     &\quad   +2\int_\eps^T\int_{\mathbb R^N}    \eta_R|u|^2|\nabla_{y,t}a|^2\,t\,d\omega\,dt\\
     &\le J_{R,\eps,T}+2\delta I_{R,\eps,T}+C_\delta K_{R,\eps,T}.
\end{align*}
Fix $0<\delta\le1/4$.  After absorption,
\begin{align}\label{eq:tr}
     I_{R,\eps,T}   &\le2J_{R,\eps,T}+CK_{R,\eps,T}
     \le2|J_{R,\eps,T}|+CK_{R,\eps,T},
\end{align}
where $C$ is independent of $R,\eps,T$, and $b_0$.

\medskip
\noindent\textit{Step 2: removal of the spatial truncation.}
Fix $t\in(\eps,T)$ and write $F_t:=|h(\cdot,t)|^2$.  Choose a radial $G$-invariant
$\chi_R\in C_c^\infty(\mathbb R^N)$ such that
$$
        \chi_R=1\quad\text{on }B(0,4R),   \qquad  \operatorname{supp}\chi_R\subset B(0,8R).
$$
Since $\operatorname{supp}\eta_R\subset\overline{B(0,2R)}$ and $\|\sigma y\|=\|y\|$, the ordinary derivatives and
reflected values of $F_t$ and $\chi_RF_t$ agree on $\operatorname{supp}\eta_R$.  By \eqref{eq:cut},
$\chi_R=1$ on $\operatorname{supp}\Delta_\kappa\eta_R$.  Hence, by the symmetry of $\Delta_\kappa$ on
$C_c^\infty(\mathbb R^N)$, we obtain
\begin{align*}
     \int_{\mathbb R^N}\eta_R\Delta_\kappa F_t\,d\omega
     &=\int_{\mathbb R^N}\eta_R\Delta_\kappa(\chi_RF_t)\,d\omega
     =\int_{\mathbb R^N}\chi_RF_t\Delta_\kappa\eta_R\,d\omega
           =\int_{\mathbb R^N}F_t\Delta_\kappa\eta_R\,d\omega.
\end{align*}
Since $|h|\le|u|$ and $P_t$ is an $L^2$ contraction, we obtain
\begin{align*}
     \left|\int_\eps^T\int_{\mathbb R^N}  |h|^2\Delta_\kappa\eta_R\,t\,d\omega\,dt\right|
     &\le\frac{C}{R^2}\int_\eps^Tt\|u(\cdot,t)\|_2^2\,dt
     \le\frac{C(T^2-\eps^2)}{2R^2}\|f\|_2^2  \longrightarrow0  \quad \text{as}\     R\to\infty.
\end{align*}

\medskip
\noindent\textit{Step 3: the temporal traces.}
Let
$$
        H_R(t):=\int_{\mathbb R^N}\eta_R|h(\cdot,t)|^2\,d\omega,    \qquad
        H(t):=\|h(\cdot,t)\|_2^2.
$$
For fixed $R$, kernel regularity and the compact support of $\eta_R$ permit two differentiations under the integral;
thus $H_R\in C^2([\eps,T])$ and
$$
     J_{R,\eps,T}  =\int_\eps^T\int_{\mathbb R^N}F_t\Delta_\kappa\eta_R\,t\,d\omega\,dt
      +[tH_R'(t)]_\eps^T-[H_R(t)]_\eps^T.
$$
For $q\in L^2(d\omega)$, the spectral calculus gives
$\partial_tP_tq=-\sqrt L e^{-t\sqrt L}q$.
The corresponding multiplier bound is
$$
        \|\partial_tP_tq\|_2   \le\sup_{\lambda\ge0}\sqrt\lambda e^{-t\sqrt\lambda}\,\|q\|_2
        =\frac1{et}\|q\|_2.
$$
Both $t\mapsto P_tq$ and $t\mapsto\partial_tP_tq$ are continuous from $[\eps,T]$ to $L^2(d\omega)$.  For either
$$
        (c_t,w_t)=\bigl(a(\cdot,t),\partial_tP_tf\bigr)    \quad\text{or}\quad
        (c_t,w_t)=\bigl(u(\cdot,t)\varphi'(v(\cdot,t)),\partial_tP_tb\bigr),
$$
the functions $c_t$ are pointwise continuous and uniformly bounded on $\mathbb R^N\times[\eps,T]$.  Thus, as
$t\to s$,
\begin{align*}
     \|c_tw_t-c_sw_s\|_2   &\le\|c_t\|_\infty\|w_t-w_s\|_2   +\|(c_t-c_s)w_s\|_2
     \longrightarrow0,
\end{align*}
where dominated convergence handles the second term.  Pointwise differentiation followed by Bochner integration
therefore gives, in $L^2(d\omega)$,
\begin{align*}
     h(\cdot,t)-h(\cdot,s)   &=\int_s^t\bigl[  a(\cdot,r)\partial_rP_rf
      -u(\cdot,r)\varphi'(v(\cdot,r))\partial_rP_rb  \bigr]\,dr.
\end{align*}
Consequently,
$$
        h\in C^1([\eps,T];L^2(d\omega)),   \qquad
        \partial_th=a\partial_tP_tf-u\varphi'(v)\partial_tP_tb.
$$
For $s=\eps,T$, monotone convergence for $H_R(s)$ and dominated convergence for $H_R'(s)$ show that
$H_R(s)\longrightarrow H(s)$ and
\begin{align*}
     H_R'(s)  =2\Real\int_{\mathbb R^N}\eta_R\overline{h(\cdot,s)}\,\partial_sh(\cdot,s)\,d\omega
     \longrightarrow2\Real\int_{\mathbb R^N}   \overline{h(\cdot,s)}\,\partial_sh(\cdot,s)\,d\omega
      =H'(s).
\end{align*}
Together with Step 2, these limits imply
\begin{equation}\label{eq:ttr}
     \lim_{R\to\infty}J_{R,\eps,T}   =[tH'(t)]_\eps^T-[H(t)]_\eps^T.
\end{equation}

Let $\mu_q$ be the spectral measure of $q\in L^2(d\omega)$ for $L$.  Since $r^2e^{-2r}$ is bounded on
$[0,\infty)$ and tends to zero as $r\to0$, dominated convergence shows that
$$
     \|t\partial_tP_tq\|_2^2   =\int_{[0,\infty)}(t\sqrt\lambda)^2e^{-2t\sqrt\lambda}\,d\mu_q(\lambda)
     \longrightarrow0   \qquad\text{as }t\to 0.
$$
Applying this to $q=f$ and $q=b$, and using $\|h(\cdot,t)\|_2\le\|f\|_2$ and
$\|u(\cdot,t)\|_\infty\le\|f\|_\infty$, we obtain, as $t\to 0$,
$$
     t\|\partial_th(\cdot,t)\|_2   \le t\|\partial_tP_tf\|_2
       +\|f\|_\infty\|\varphi'\|_\infty t\|\partial_tP_tb\|_2   \longrightarrow0.
$$
Therefore,
$$
     t|H'(t)|   \le2\|h(\cdot,t)\|_2t\|\partial_th(\cdot,t)\|_2   \longrightarrow0.
$$
At infinity,
$$
        0\le e^{-2t\sqrt\lambda}\le1,   \qquad
        e^{-2t\sqrt\lambda}\longrightarrow\one_{\{0\}}(\lambda).
$$
Since $\ker_{L^2(d\omega)}L=\{0\}$, dominated convergence shows that
$$
        \|P_tf\|_2^2   =\int_{[0,\infty)}e^{-2t\sqrt\lambda}\,d\mu_f(\lambda)
        \longrightarrow\mu_f(\{0\})=0.
$$
Moreover, the multiplier estimate shows that
$$
        \sup_{t>0}t\|\partial_tP_tq\|_2\le e^{-1}\|q\|_2,   \qquad q\in L^2(d\omega).
$$
Therefore, $H(t)\le\|P_tf\|_2^2\to0$ as $t\to\infty$.
The same multiplier bound also shows that
$$
     t\|\partial_th(\cdot,t)\|_2  \le t\|\partial_tP_tf\|_2
       +\|f\|_\infty\|\varphi'\|_\infty t\|\partial_tP_tb\|_2   \le C_{f,b}.
$$
Consequently,
$$
     t|H'(t)|  \le2\|h(\cdot,t)\|_2t\|\partial_th(\cdot,t)\|_2   \longrightarrow0.
$$

To compute the lower trace, strong continuity of $P_t$ on $L^2(d\omega)$ gives
$\|v(\cdot,t)-(1-b)\|_2=\|P_tb-b\|_2\to0$.  The Lipschitz continuity of $\varphi$ then implies
$\|a(\cdot,t)-\varphi(1-b)\|_2\le\operatorname{Lip}(\varphi)\|P_tb-b\|_2\to0$.
Combining this convergence with $P_tf\to f$ in $L^2(d\omega)$, we obtain
$$
     \|h(\cdot,t)-f\varphi(1-b)\|_2  \le\|P_tf-f\|_2
       +\|f\|_\infty\operatorname{Lip}(\varphi)\|P_tb-b\|_2  \longrightarrow0.
$$
Together with the estimates at infinity, this identifies all temporal traces:
$$
        H(0+)=\|f\varphi(1-b)\|_2^2,  \qquad  H(\infty)=0,
        \qquad   \lim_{t\to 0}tH'(t)=\lim_{t\to\infty}tH'(t)=0.
$$
For $n\ge2$, let $\eps_n=n^{-1}$ and $T_n=n$.  Equation~\eqref{eq:ttr} becomes
\begin{equation}\label{eq:bdry}
        \lim_{n\to\infty}\lim_{R\to\infty}J_{R,\eps_n,T_n}  =\|f\varphi(1-b)\|_2^2.
\end{equation}
Since the absolute value is continuous and the limit in \eqref{eq:bdry} is non-negative,
$$
     \lim_{n\to\infty}  \left|\lim_{R\to\infty}J_{R,\eps_n,T_n}\right|
     =\left|\lim_{n\to\infty}\lim_{R\to\infty}J_{R,\eps_n,T_n}\right|
     =\|f\varphi(1-b)\|_2^2.
$$
Since $\eta_R\uparrow1$ and $(\eps_n,T_n)\uparrow(0,\infty)$, monotone convergence and \eqref{eq:tr} imply
\begin{align*}
     &\int_0^\infty\int_{\mathbb R^N}    a^2\Gamma_{\kappa,t}(u)\,t\,d\omega\,dt
     =\lim_{n\to\infty}\lim_{R\to\infty}I_{R,\eps_n,T_n}\\
     \le\,&2\lim_{n\to\infty}  \left|\lim_{R\to\infty}J_{R,\eps_n,T_n}\right|
       +C\lim_{n\to\infty}\lim_{R\to\infty}K_{R,\eps_n,T_n}\\
      =\,&2\|f\varphi(1-b)\|_2^2   +C\int_0^\infty\int_{\mathbb R^N}
       |u|^2\Psi(v)^2|\nabla_{y,t}v|^2\,t\,d\omega\,dt.
\end{align*}
The constant is independent of $b_0$, so \eqref{eq:loc0} follows.

\medskip
\noindent\textit{Step 4: passage to the indicator datum.}
Let  $b:=\one_{E^c}\in L^2(d\omega)$.  Since $d\omega$ is a Radon measure, choose real-valued
$q_n\in C_c(\mathbb R^N)$ such that
$$
        0\le q_n\le1,   \qquad    \|q_n-b\|_{L^2(d\omega)}\longrightarrow0.
$$
Define
$$
        \widetilde q_n(x):=\frac1{|G|}\sum_{\sigma\in G}q_n(\sigma x).
$$
Then $\widetilde q_n\in C_c(\mathbb R^N)$ is $G$-invariant, $0\le\widetilde q_n\le1$, and
\begin{align*}
     \|\widetilde q_n-b\|_{L^2(d\omega)}  &\le\frac1{|G|}\sum_{\sigma\in G}
       \|q_n\circ\sigma-b\|_{L^2(d\omega)}   =\|q_n-b\|_{L^2(d\omega)}\longrightarrow0.
\end{align*}

Let $\rho_\delta$ be a non-negative radial Euclidean mollifier supported in $B(0,\delta)$, and write
$K_n:=\operatorname{supp}\widetilde q_n$.  For fixed $n$ and $0<\delta<1$,
$$
     \operatorname{supp}(\rho_\delta*\widetilde q_n-\widetilde q_n)   \subset K_n+B(0,1).
$$
Since $d\omega$ is locally finite and $\rho_\delta*\widetilde q_n\to\widetilde q_n$ uniformly,
$$
     \|\rho_\delta*\widetilde q_n-\widetilde q_n\|_{L^2(d\omega)}
     \le\omega(K_n+B(0,1))^{1/2}  \|\rho_\delta*\widetilde q_n-\widetilde q_n\|_\infty
       \longrightarrow0
$$
as $\delta\to 0$.  Choose $0<\delta_n<1/n$ such that
$$
        \|\rho_{\delta_n}*\widetilde q_n-\widetilde q_n\|_{L^2(d\omega)}\le\frac1n,
$$
and let $b_n:=\rho_{\delta_n}*\widetilde q_n$.  By construction,
$b_n\in C_c^\infty(\mathbb R^N)$ and $0\le b_n\le1$.  The radiality of $\rho_{\delta_n}$ and the orthogonality of
the $G$-action ensure that $b_n\circ\sigma=b_n$ for every $\sigma\in G$.
Moreover,
$$
        \|b_n-b\|_{L^2(d\omega)}   \le\frac1n+\|\widetilde q_n-b\|_{L^2(d\omega)}    \longrightarrow0.
$$

Apply \eqref{eq:loc0} to $b_n$, and let
$$
        v_n:=1-P_tb_n,    \qquad    a_n:=\varphi(v_n).
$$
For every $m\ge2$, the $L^2$ contraction of $P_t$ implies
$$
     \int_{1/m}^{m}\int_{\mathbb R^N}|v_n(y,t)-v(y,t)|^2\,d\omega(y)\,dt
     \le(m-m^{-1})\|b_n-b\|_{L^2(d\omega)}^2   \longrightarrow0.
$$
After passing to a diagonal subsequence, $v_n\to v$ a.e.\ on $\mathbb R^{N+1}_+$.
Since $\varphi$ is continuous, $a_n\to a$ a.e.\ on $\mathbb R^{N+1}_+$.
By \eqref{eq:lp2},
\begin{equation*}
        \int_0^\infty\int_{\mathbb R^N}
        \Gamma_{\kappa,t}(u)\,t\,d\omega\,dt<\infty.
\end{equation*}
Since $0\le a_n\le1$, dominated convergence implies
\begin{equation*}
\int_0^\infty\int_{\mathbb R^N}
a_n^2\Gamma_{\kappa,t}(u)\,t\,d\omega\,dt
\longrightarrow
\int_0^\infty\int_{\mathbb R^N}
a^2\Gamma_{\kappa,t}(u)\,t\,d\omega\,dt.
\end{equation*}

For the approximation error, \eqref{eq:lpg} and $v_n-v=-P_t(b_n-b)$ imply
\begin{equation*}
\|\nabla_{y,t}(v_n-v)\|_{L^2(t\,d\omega\,dt)}^2
\le C\|b_n-b\|_{L^2(d\omega)}^2\longrightarrow0.
\end{equation*}
For the limiting datum, \eqref{eq:lpg} gives
\begin{equation*}
\|\nabla_{y,t}v\|_{L^2(t\,d\omega\,dt)}^2
\le C\|b\|_{L^2(d\omega)}^2
=C\omega(E^c)<\infty.
\end{equation*}
Let
\begin{equation*}
        X_n:=\Psi(v_n)\nabla_{y,t}v_n,
        \qquad
        X:=\Psi(v)\nabla_{y,t}v.
\end{equation*}
Then
\begin{align*}
\|X_n-X\|_{L^2(t\,d\omega\,dt)}
&\le\|\Psi\|_\infty
   \|\nabla_{y,t}(v_n-v)\|_{L^2(t\,d\omega\,dt)}
 +\|[\Psi(v_n)-\Psi(v)]\nabla_{y,t}v\|_{L^2(t\,d\omega\,dt)}
\longrightarrow0.
\end{align*}
Indeed, the second term tends to zero by dominated convergence, since $v_n\to v$ almost everywhere and
\begin{equation*}
        |[\Psi(v_n)-\Psi(v)]\nabla_{y,t}v|^2
        \le4\|\Psi\|_\infty^2|\nabla_{y,t}v|^2.
\end{equation*}
The $L^\infty$ contraction of $P_t$ implies
\begin{equation*}
        \|u(X_n-X)\|_{L^2(t\,d\omega\,dt)}
        \le\|f\|_\infty\|X_n-X\|_{L^2(t\,d\omega\,dt)}
        \longrightarrow0.
\end{equation*}
Hence $uX_n\to uX$ in $L^2(t\,d\omega\,dt)$, and
\begin{equation*}
        \|uX_n\|_{L^2(t\,d\omega\,dt)}^2
        \longrightarrow
        \|uX\|_{L^2(t\,d\omega\,dt)}^2.
\end{equation*}

Finally, the Lipschitz continuity of $\varphi$ implies
\begin{equation*}
\|f[\varphi(1-b_n)-\varphi(1-b)]\|_{L^2(d\omega)}
\le\|f\|_\infty\operatorname{Lip}(\varphi)\|b_n-b\|_{L^2(d\omega)}
\longrightarrow0.
\end{equation*}
Since $\varphi(1-b)=\one_E$,
\begin{equation*}
        \|f\varphi(1-b_n)-f\one_E\|_{L^2(d\omega)}
        \longrightarrow0.
\end{equation*}
Applying \eqref{eq:loc0} to $b_n$ and using the preceding convergences,
\begin{align*}
\int_0^\infty\int_{\mathbb R^N}
  a^2\Gamma_{\kappa,t}(u)\,t\,d\omega\,dt
=\,&\lim_{n\to\infty}\int_0^\infty\int_{\mathbb R^N}
  a_n^2\Gamma_{\kappa,t}(u)\,t\,d\omega\,dt\\
\le\,&C\lim_{n\to\infty}\|f\varphi(1-b_n)\|_{L^2(d\omega)}^2
 +C\lim_{n\to\infty}\|uX_n\|_{L^2(t\,d\omega\,dt)}^2\\
=\,&C\int_E|f|^2\,d\omega
 +C\int_0^\infty\int_{\mathbb R^N}
  |u|^2\Psi(v)^2|\nabla_{y,t}v|^2\,t\,d\omega\,dt.
\end{align*}
\end{proof}

\subsection{Estimating the error term}

The support of $\Psi$ confines the error in \eqref{eq:loc} to $\widetilde W$, where $|u|\le\lambda$.

\begin{lemma}\label{lem:err}
Let $E=E_\beta(\lambda)$, $u=P_tf$, $v=P_t\one_E$, and let $\Psi$ be the cut-off fixed above.  Then
\begin{equation*}
\int_0^\infty\int_{\mathbb R^N}
|u(y,t)|^2\Psi(v(y,t))^2|\nabla_{y,t}v(y,t)|^2\,t\,d\omega(y)\,dt
\le C\lambda^2\omega(E^c),
\end{equation*}
where $C$ is structural.
\end{lemma}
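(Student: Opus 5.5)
The plan is to use the support of $\Psi$ to bound $|u|$ pointwise by $\lambda$, and then to control the remaining gradient of $v$ by the $L^2$ Littlewood--Paley estimate.

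\emph{Step 1: locate the support.} Since $\Psi\in C_c^\infty((c_2,19/20))$, the integrand vanishes unless $v(y,t)>c_2$. Lemma~\ref{lem:vlo} applies because $\beta\ge\beta_0$ was fixed, and it shows that $v(y,t)<c_2$ whenever $(y,t)\notin\widetilde W$. Hence the integrand is supported in $\widetilde W$.

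\emph{Step 2: bound $|u|$ on $\widetilde W$.} If $(y,t)\in\widetilde W$, there is $x\in E$ with $d(x,y)<\beta t$. By the definition of $\mathcal N_P^\beta f$,
$$
|u(y,t)|=|P_tf(y)|\le\mathcal N_P^\beta f(x)\le\lambda .
$$
Combined with $0\le\Psi\le1$, this gives the pointwise bound
$$
|u|^2\Psi(v)^2|\nabla_{y,t}v|^2\le\lambda^2|\nabla_{y,t}v|^2
$$
on all of $\mathbb R^{N+1}_+$.

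\emph{Step 3: convert to the bad set.} By \eqref{eq:vbad}, $v=1-P_t\one_{E^c}$. Therefore $\nabla_{y,t}v=-\nabla_{y,t}P_t\one_{E^c}$, since the constant $1$ has zero gradient. By \eqref{eq:bad}, $\one_{E^c}\in L^2(d\omega)$, so \eqref{eq:lpg} yields
$$
\int_0^\infty\int_{\mathbb R^N}|\nabla_{y,t}v|^2\,t\,d\omega\,dt\le\tfrac12\|\one_{E^c}\|_{L^2(d\omega)}^2=\tfrac12\omega(E^c).
$$
Multiplying by $\lambda^2$ gives the claim with $C=1/2$.

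\emph{Expected obstacle.} The only delicate point is justifying the identity $\nabla_{y,t}v=-\nabla_{y,t}P_t\one_{E^c}$ pointwise with smooth representatives. The function $v=P_t\one_E$ is defined by the kernel, and $\one_E$ is bounded but not in $L^2$. For each fixed $(y,t)$, however, the kernel derivatives $\partial_s^m\partial_y^\nu p_s(y,\cdot)$ are integrable: this follows from the derivative kernel bounds of \cite[Proposition~5.1(c)]{ADH} together with doubling, exactly as used in Lemma~\ref{lem:lp}. So one may differentiate under the integral in $P_t1=1$ and in $P_t\one_E+P_t\one_{E^c}=1$. This gives $\nabla_{y,t}P_t\one_E=-\nabla_{y,t}P_t\one_{E^c}$, where the right-hand side is the smooth representative used in Lemma~\ref{lem:lp}.
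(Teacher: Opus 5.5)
Your proposal is correct and follows the paper's proof essentially line by line: the support of $\Psi$ together with Lemma~\ref{lem:vlo} places the integrand in $\widetilde W$, where $|u|\le\lambda$, and then $\nabla_{y,t}v=-\nabla_{y,t}P_t\one_{E^c}$ combined with \eqref{eq:lpg} gives the bound, in fact with $C=1/2$. Your extra justification of the gradient identity, by differentiating under the integral in $P_t\one_E+P_t\one_{E^c}=1$ using the kernel derivative bounds, is a harmless detail that the paper leaves implicit.
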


\begin{proof}
If $\Psi(v(y,t))\ne0$, then $v(y,t)>c_2$; hence Lemma~\ref{lem:vlo} implies $(y,t)\in\widetilde W$.
Hence some $x\in E$ satisfies $d(x,y)<\beta t$, and
\begin{equation*}
        |u(y,t)|
        =|P_tf(y)|
        \le\mathcal N_P^\beta f(x)
        \le\lambda.
\end{equation*}
By \eqref{eq:bad}, $\|\one_{E^c}\|_{L^2(d\omega)}^2=\omega(E^c)<\infty$.  Moreover,
\begin{equation*}
        v=P_t\one_E=1-P_t\one_{E^c},
        \qquad
        \nabla_{y,t}v=-\nabla_{y,t}P_t\one_{E^c}.
\end{equation*}
Hence \eqref{eq:lpg}, the preceding support bound, and $0\le\Psi\le1$ imply
\begin{align*}
\int_0^\infty\int_{\mathbb R^N}
 |u|^2\Psi(v)^2|\nabla_{y,t}v|^2\,t\,d\omega\,dt
&\le\lambda^2\int_0^\infty\int_{\mathbb R^N}
 \Psi(v)^2|\nabla_{y,t}v|^2\,t\,d\omega\,dt\\
&\le\lambda^2\int_0^\infty\int_{\mathbb R^N}
 |\nabla_{y,t}v|^2\,t\,d\omega\,dt\\
&=\lambda^2\int_0^\infty\int_{\mathbb R^N}
 |\nabla_{y,t}P_t\one_{E^c}|^2\,t\,d\omega\,dt\\
&\le C\lambda^2\|\one_{E^c}\|_{L^2(d\omega)}^2
 =C\lambda^2\omega(E^c).
\end{align*}
\end{proof}

\subsection{Completion of the good-\texorpdfstring{$\lambda$}{lambda} estimate}

Substitute \eqref{eq:loc} into \eqref{eq:glob}, and use Lemma~\ref{lem:err} for the remaining term:
\begin{align}\label{eq:agood}
\int_A\mathcal S_Pf(x)^2\,d\omega(x)
&\le C\int_0^\infty\int_{\mathbb R^N}
  a(y,t)^2\Gamma_{\kappa,t}(u)(y,t)\,t\,d\omega(y)\,dt\\
&\le C\int_E|f(y)|^2\,d\omega(y)
  +C\int_0^\infty\int_{\mathbb R^N}
  |u|^2\Psi(v)^2|\nabla_{y,t}v|^2\,t\,d\omega\,dt\notag\\
&\le C\int_E|f(y)|^2\,d\omega(y)
  +C\lambda^2\omega(E^c).\notag
\end{align}

Since $P_{1/j}f\to f$ in $L^2(d\omega)$, choose a subsequence $P_{1/j_k}f\to f$ almost everywhere.  For this
subsequence, $d(y,y)<\beta/j_k$ and the definition of $\mathcal N_P^\beta$ imply
\begin{equation*}
        |f(y)|
        =\lim_{k\to\infty}|P_{1/j_k}f(y)|
        \le\mathcal N_P^\beta f(y)
        \quad\text{for a.e. }y.
\end{equation*}
Insert \eqref{eq:agood} into \eqref{eq:split}.  The preceding pointwise estimate and
$E=\{\mathcal N_P^\beta f\le\lambda\}$ continue the bound as follows:
\begin{align*}
\omega\{\mathcal S_Pf>\lambda\}
&\le C\omega(E^c)
  +\frac1{\lambda^2}\int_A\mathcal S_Pf(x)^2\,d\omega(x)\\
&\le C\omega(E^c)
  +\frac{C}{\lambda^2}\int_E|f(y)|^2\,d\omega(y)\\
&\le C\omega(E^c)
  +\frac{C}{\lambda^2}\int_E\bigl(\mathcal N_P^\beta f(y)\bigr)^2\,d\omega(y)\\
&=C\omega\{\mathcal N_P^\beta f>\lambda\}
  +\frac{C}{\lambda^2}
   \int_{\{\mathcal N_P^\beta f\le\lambda\}}
   \bigl(\mathcal N_P^\beta f(y)\bigr)^2\,d\omega(y).
\end{align*}
Thus \eqref{eq:main} holds.  For the layer-cake form, let $F$ be a non-negative measurable function and let
$\lambda>0$.  Tonelli's theorem gives
\begin{align}\label{eq:lay}
2\int_0^\lambda s\,\omega\{F>s\}\,ds
&=\int_{\mathbb R^N}\int_0^\lambda
  2s\,\one_{\{F(x)>s\}}\,ds\,d\omega(x)\\
&=\int_{\mathbb R^N}\min\{F(x),\lambda\}^2\,d\omega(x)\notag\\
&=\int_{\{F\le\lambda\}}F(x)^2\,d\omega(x)
  +\lambda^2\omega\{F>\lambda\}.\notag
\end{align}
After dividing \eqref{eq:lay} by $\lambda^2$, we obtain the two-sided comparison
\begin{align*}
    \frac12\left(   \omega\{F>\lambda\}
     +\frac1{\lambda^2}\int_{\{F\le\lambda\}}F(x)^2\,d\omega(x)   \right)
    &\le\omega\{F>\lambda\}   +\frac1{\lambda^2}\int_0^\lambda s\,\omega\{F>s\}\,ds\\
    &=\frac32\omega\{F>\lambda\}   +\frac1{2\lambda^2}\int_{\{F\le\lambda\}}F(x)^2\,d\omega(x)\\
    &\le\frac32\left(  \omega\{F>\lambda\}  +\frac1{\lambda^2}\int_{\{F\le\lambda\}}F(x)^2\,d\omega(x) \right).
\end{align*}
For $F=\mathcal N_P^\beta f$, the middle expression is the right-hand side of \eqref{eq:layer}, apart from its
constant, whereas the first and last expressions are fixed multiples of the right-hand side of \eqref{eq:main}.
Hence \eqref{eq:main} and \eqref{eq:layer} are equivalent after changing $C$.

\section{The chamber-lifted formulation}\label{sec:ch}

We now unfold the full-space theorem on a fixed fundamental chamber.  On chamber representatives, the orbit distance
is Euclidean, while the reflection energy becomes a finite coupling of fiber coordinates across the walls.  Throughout
this section, the estimates are asserted only for $F=Uf$ with $f\in C_c^\infty(\mathbb R^N)$; no independent theorem is
claimed for chamberwise smooth vector data lacking global cross-wall compatibility.

\subsection{The chamber decomposition}

Fix a closed fundamental chamber $\calC\subset\mathbb R^N$, and enumerate
$$
        G=\{\sigma_1,\ldots,\sigma_m\},    \qquad   m=|G|,   \qquad    \sigma_1=\mathrm{Id}.
$$
For $1\le\rho\le m$, let $\Omega_\rho:=\sigma_\rho\calC$, and recall that
$\mathcal W=\bigcup_{\alpha\in R}\alpha^\perp$.
The action of $G$ on the open chambers is simply transitive.  Therefore,
$$
        \mathbb R^N=\bigcup_{\rho=1}^m\Omega_\rho,   \qquad
        \mathbb R^N\setminus\mathcal W    =\bigsqcup_{\rho=1}^m(\Omega_\rho\setminus\mathcal W).
$$
Since $\omega(\mathcal W)=0$, every non-negative measurable function $\Phi$ satisfies
$$
        \int_{\mathbb R^N}\Phi(x)\,d\omega(x)     =\sum_{\rho=1}^m\int_{\Omega_\rho}\Phi(x)\,d\omega(x).
$$

The orbit distance becomes the Euclidean distance on chamber representatives.

\begin{lemma}\label{lem:close}
For every $x,y\in\calC$,
\begin{equation}\label{eq:close}
        d(x,y)=\|x-y\|.
\end{equation}
Consequently, for $1\le\rho,\tau\le m$ and $x,y\in\calC$,
\begin{equation}\label{eq:cdist}
        d(\sigma_\rho x,\sigma_\tau y)=\|x-y\|.
\end{equation}
\end{lemma}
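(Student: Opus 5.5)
To prove \eqref{eq:close}, it suffices to show that for $x,y\in\calC$ and every $\sigma\in G$ one has $\|x-\sigma y\|\ge\|x-y\|$. Since $\sigma=\mathrm{Id}$ gives equality, taking the minimum then yields $d(x,y)=\|x-y\|$. Because $\|x-\sigma y\|^2=\|x\|^2+\|y\|^2-2\langle x,\sigma y\rangle$, the claim is equivalent to
\begin{equation*}
\langle x,\sigma y\rangle\le\langle x,y\rangle,\qquad x,y\in\calC,\ \sigma\in G.
\end{equation*}

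We prove this inequality in two steps, using the standard structure of the fundamental chamber. Write $\calC=\{z:\langle z,\alpha\rangle\ge0\ \text{for all }\alpha\in R_+\}$; we may choose $R_+$ compatibly with the fixed chamber, since the positive subsystem is only a labeling device.

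\emph{Step 1.} For $y\in\calC$ and $\sigma\in G$, the vector $y-\sigma y$ is a non-negative combination of positive roots. We prove this by induction on the length $\ell(\sigma)$ with respect to the simple reflections. Write $\sigma=\sigma_\alpha\sigma'$ with $\alpha$ simple and $\ell(\sigma')<\ell(\sigma)$. Then
\begin{equation*}
y-\sigma y=(y-\sigma' y)+(\sigma' y-\sigma_\alpha\sigma' y)=(y-\sigma'y)+\langle\sigma'y,\alpha\rangle\alpha,
\end{equation*}
using $\|\alpha\|^2=2$ in \eqref{eq:refl}. Next, $\langle\sigma' y,\alpha\rangle=\langle y,\sigma'^{-1}\alpha\rangle$, and the exchange property $\ell(\sigma_\alpha\sigma')>\ell(\sigma')$ gives $\sigma'^{-1}\alpha\in R_+$. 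Hence this coefficient is non-negative because $y\in\calC$, and the induction closes.

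\emph{Step 2.} Pairing with $x\in\calC$, we get $\langle x,y-\sigma y\rangle\ge0$, since $x$ has non-negative inner product with every positive root. This is exactly the desired inequality.

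The consequence \eqref{eq:cdist} then follows from the $G$-invariance $d(\rho x,\tau y)=d(x,y)$ recorded earlier.

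The only real obstacle is Step 1, because it imports the standard Coxeter-group fact that the reduced word and the exchange condition control the sign of $\sigma'^{-1}\alpha$. If one prefers to avoid this, there is an alternative argument. Assume first that $x$ lies in the open chamber, and pick $\sigma$ with $\sigma y$ nearest to $x$. If $\sigma y\notin\calC$, some wall $\alpha^\perp$ separates $x$ from $\sigma y$, and reflecting $\sigma y$ across it strictly decreases the distance to $x$. This contradicts the choice of $\sigma$, so $\sigma y\in\calC$. Since $\calC$ meets each orbit exactly once, $\sigma y=y$. Continuity in $x$ then extends the result to the closed chamber.
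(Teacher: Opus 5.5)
Your proof is correct, and so is the alternative argument at the end. The paper argues geometrically. It walks a reduced gallery from $\sigma\calC$ back to $\calC$ and reflects the moving point $z_0=\sigma x$ successively across the crossed walls $\alpha_j^\perp$. Because a reduced gallery crosses exactly the walls separating its endpoints, it gets $\langle z_{j-1},\alpha_j\rangle\le0\le\langle y,\alpha_j\rangle$, so the distance to $y$ never increases along the way.

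Your main route is the algebraic counterpart of the same telescoping. Adding up the paper's steps shows that $x-\sigma x$ is a non-negative combination of the separating roots, which is your Step~1 with $x$ and $y$ interchanged. You obtain the signs differently: from a reduced word in simple reflections and the criterion $\ell(\sigma_\alpha\sigma')>\ell(\sigma')\Leftrightarrow\sigma'^{-1}\alpha\in R_+$ (for simple $\alpha$), rather than from gallery separation. What your packaging buys is a clean decoupling of the two variables. The cone inclusion $y-\sigma y\in\operatorname{cone}(R_+)$ uses only $y\in\calC$, and $x\in\calC$ enters once, in the final pairing, whereas the paper tracks the position of the moving point relative to each wall.

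Your nearest-point argument is genuinely different. It needs no reduced words or galleries, only four ingredients: the one-reflection identity (with strict sign when $x$ is interior), finiteness of $G$, the fact that $\calC$ meets each orbit exactly once, and density of the open chamber in $\calC$. The Coxeter combinatorics is thereby outsourced to the fundamental-domain theorem.

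For \eqref{eq:cdist}, quoting the already recorded invariance $d(\rho x,\tau y)=d(x,y)$ is exactly what the paper's final display re-derives.
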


\begin{proof}
By the definition of $d$ and the orthogonality of $G$, we have
$$
        d(x,y)  =\min_{\gamma\in G}\|x-\gamma y\|   =\min_{\gamma\in G}\|\gamma^{-1}x-y\|.
$$
Thus it suffices to prove $\|x-y\|\le\|\sigma x-y\|$ for every $\sigma\in G$.  Fix such a $\sigma$ and choose a
reduced gallery
$$
        \sigma\calC=\calC_0,\calC_1,\ldots,\calC_\ell=\calC.
$$
For $1\le j\le\ell$, choose $\alpha_j\in R$ so that $H_j=\alpha_j^\perp$ is the common wall of
$\calC_{j-1}$ and $\calC_j$ and
$$
        \calC_j=\sigma_{\alpha_j}\calC_{j-1},  \qquad    \langle z,\alpha_j\rangle\ge0    \quad(z\in\calC).
$$
Define
$$
        z_0:=\sigma x,   \qquad    z_j:=\sigma_{\alpha_j}z_{j-1},     \quad 1\le j\le\ell.
$$
Then $z_j\in\calC_j$.  A reduced gallery crosses precisely the walls separating its endpoints;
see \cite[\S 1.6--1.8, pp.~12--16]{Humphreys}.  Hence $H_j$ separates $\calC_{j-1}$ from $\calC$, and the chosen
orientation forces
$$
        \langle z_{j-1},\alpha_j\rangle\le0    \le\langle y,\alpha_j\rangle,    \qquad 1\le j\le\ell.
$$
By simple transitivity of the $G$-action on the chambers, we have
$$
        \bigl(\sigma_{\alpha_\ell}\cdots\sigma_{\alpha_1}\sigma\bigr)\calC=\calC
       \quad  \Longrightarrow   \quad \sigma_{\alpha_\ell}\cdots\sigma_{\alpha_1}\sigma=\mathrm{Id} \quad   \Longrightarrow  \quad  z_\ell=x.
$$
Using \eqref{eq:refl} and the preceding sign condition, for $1\le j\le\ell$ we obtain
\begin{align*}
    \|z_j-y\|^2-\|z_{j-1}-y\|^2  &=\bigg\|z_{j-1}-y  -2\frac{\langle z_{j-1},\alpha_j\rangle}{\|\alpha_j\|^2}\alpha_j\bigg\|^2
     -\|z_{j-1}-y\|^2  =4\frac{\langle z_{j-1},\alpha_j\rangle     \langle y,\alpha_j\rangle}{\|\alpha_j\|^2}   \le0.
\end{align*}
Consequently,
$$
        \|\sigma x-y\|^2     =\|z_0-y\|^2    \ge\|z_1-y\|^2   \ge\cdots
        \ge\|z_\ell-y\|^2    =\|x-y\|^2.
$$
Since $\sigma\in G$ was arbitrary and $\mathrm{Id}\in G$, taking the minimum over $\sigma$ establishes
\eqref{eq:close}.

Finally, as $\gamma$ ranges over $G$, so does $\sigma_\rho^{-1}\gamma\sigma_\tau$.  Hence \eqref{eq:close} implies
\begin{align*}
    d(\sigma_\rho x,\sigma_\tau y)   &=\min_{\gamma\in G}\|\sigma_\rho x-\gamma\sigma_\tau y\|
    =\min_{\gamma\in G}\|x-\sigma_\rho^{-1}\gamma\sigma_\tau y\|    =d(x,y)   =\|x-y\|,
\end{align*}
which is \eqref{eq:cdist}.
\end{proof}

For $x\in\calC$ and $r>0$, define
$$
        B_\calC(x,r):=B(x,r)\cap\calC,   \qquad      V_\calC(x,r):=\omega(B_\calC(x,r)).
$$

The restricted measure has the same ball growth, up to the finite chamber multiplicity.

\begin{lemma}
For every $x\in\calC$ and $r>0$,
\begin{equation}\label{eq:cvol}
        |G|^{-1}V(x,r)\le V_\calC(x,r)\le V(x,r).
\end{equation}
Consequently, $(\calC,\|\cdot\|,d\omega|_\calC)$ is a space of homogeneous type.
\end{lemma}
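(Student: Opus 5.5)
The upper bound $V_\calC(x,r)\le V(x,r)$ is immediate from $B_\calC(x,r)\subset B(x,r)$. For the lower bound I will cover $B(x,r)$ by the $m=|G|$ pieces $B(x,r)\cap\Omega_\rho$, $\rho=1,\dots,m$. I will then show that each piece has $\omega$-measure at most $V_\calC(x,r)$. Summing gives $V(x,r)\le |G|\,V_\calC(x,r)$.

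To bound a single piece, I apply $\sigma_\rho^{-1}$. Since $d\omega$ is $G$-invariant,
$$
\omega\bigl(B(x,r)\cap\sigma_\rho\calC\bigr)=\omega\bigl(\sigma_\rho^{-1}B(x,r)\cap\calC\bigr)=\omega\bigl(B(\sigma_\rho^{-1}x,r)\cap\calC\bigr).
$$
The key point is the inclusion $B(\sigma x,r)\cap\calC\subset B(x,r)\cap\calC$ for $x\in\calC$ and $\sigma\in G$. This follows from Lemma~\ref{lem:close}. For $y\in\calC$ with $\|\sigma x-y\|<r$, we have
$$
\|x-y\|=d(x,y)\le\|\sigma x-y\|<r .
$$
Hence each piece has measure at most $V_\calC(x,r)$, which proves \eqref{eq:cvol}.

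For the homogeneous-type conclusion, $\|\cdot\|$ is a metric on $\calC$. The measure $d\omega|_\calC$ gives finite positive mass to every ball $B_\calC(x,r)$; positivity holds because $V(x,r)>0$ by full support and the lower bound in \eqref{eq:cvol}. Doubling follows by combining \eqref{eq:cvol} with the doubling of $V$:
$$
V_\calC(x,2r)\le V(x,2r)\lesssim V(x,r)\le |G|\,V_\calC(x,r).
$$

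The only nontrivial step is the inclusion of balls. Everything else is bookkeeping, and that inclusion is exactly the content of Lemma~\ref{lem:close}.
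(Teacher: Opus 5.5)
Your proposal is correct and follows the paper's proof essentially step for step. Both arguments use the chamber decomposition of $B(x,r)$, the identity $\omega(B(x,r)\cap\sigma\calC)=\omega(B(\sigma^{-1}x,r)\cap\calC)$, the inclusion $B(\sigma^{-1}x,r)\cap\calC\subset B_\calC(x,r)$ from Lemma~\ref{lem:close}, and the doubling chain $V_\calC(x,2r)\le V(x,2r)\lesssim V(x,r)\le|G|V_\calC(x,r)$.
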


\begin{proof}
The inclusion $B_\calC(x,r)\subset B(x,r)$ immediately implies
$$
        V_\calC(x,r)\le V(x,r).
$$
For the reverse estimate, fix $\sigma\in G$.  By the $G$-invariance of $d\omega$ and the orthogonality of $\sigma$,
we have
\begin{align*}
    \omega\bigl(B(x,r)\cap\sigma\calC\bigr)   &=\int_\calC\one_{B(x,r)}(\sigma y)\,d\omega(y)
    =\int_\calC\one_{B(\sigma^{-1}x,r)}(y)\,d\omega(y)
    =\omega\bigl(B(\sigma^{-1}x,r)\cap\calC\bigr).
\end{align*}
If $y\in B(\sigma^{-1}x,r)\cap\calC$, then \eqref{eq:cdist} implies
$$
        \|x-y\|    =d(\sigma^{-1}x,y)    \le\|\sigma^{-1}x-y\|   <r.
$$
Therefore,
$$
        B(\sigma^{-1}x,r)\cap\calC      \subset B(x,r)\cap\calC,
$$
and summing over the chambers leads to
\begin{align*}
    V(x,r)  &=\sum_{\sigma\in G}\omega\bigl(B(x,r)\cap\sigma\calC\bigr)
    =\sum_{\sigma\in G}\omega\bigl(B(\sigma^{-1}x,r)\cap\calC\bigr)
    \le\sum_{\sigma\in G}V_\calC(x,r)    =|G|V_\calC(x,r).
\end{align*}
Let $C_D$ be the doubling constant of $d\omega$ on $\mathbb R^N$.  Applying \eqref{eq:cvol} at $r$ and $2r$, we obtain
$$
        V_\calC(x,2r)   \le V(x,2r)   \le C_DV(x,r)    \le C_D|G|V_\calC(x,r).
$$
Since $B_\calC(x,r)$ is the ball of radius $r$ for the restricted Euclidean metric and
$0<V_\calC(x,r)<\infty$, the homogeneous-type conclusion follows.
\end{proof}

\subsection{Chamber lifting and preservation of the \texorpdfstring{$L^p$}{Lp}-norm}

For a measurable scalar-valued function $f$ on $\mathbb R^N$, define
$$
        Uf(x):=\bigl(f(\sigma_1x),f(\sigma_2x),\ldots,f(\sigma_mx)\bigr),     \qquad x\in\calC.
$$
Thus $Uf$ is $\mathbb C^m$-valued.  The lift retains the reflected values as distinct fiber coordinates and is not a
quotient map.

Conversely, let $F=(F_1,\ldots,F_m)$ be measurable on $\calC$.  Since the chambers are disjoint outside $\mathcal W$,
the formula
$$
        (U^{-1}F)(\sigma_\rho x):=F_\rho(x),   \qquad   x\in\calC\setminus\mathcal W,
        \quad 1\le\rho\le m,
$$
defines $U^{-1}F$ uniquely almost everywhere on $\mathbb R^N$; its values on $\mathcal W$ may be chosen arbitrarily.

For $0<p<\infty$, equip $\mathbb C^m$ with the unnormalized norm or quasi-norm
$$
        \|(a_1,\ldots,a_m)\|_{\ell_m^p}^p     :=\sum_{\rho=1}^m|a_\rho|^p,
$$
and, for $p=\infty$, define
$$
        \|(a_1,\ldots,a_m)\|_{\ell_m^\infty}     :=\max_{1\le\rho\le m}|a_\rho|.
$$
For $0<p<\infty$, the $G$-invariance of $d\omega$ and the chamber decomposition imply
\begin{align*}
    \|Uf\|_{L^p(\calC;\ell_m^p)}^p   &=\int_\calC\sum_{\rho=1}^m|f(\sigma_\rho x)|^p\,d\omega(x)
    =\sum_{\rho=1}^m\int_{\sigma_\rho\calC}|f(z)|^p\,d\omega(z)\\
    &=\int_{\mathbb R^N}|f(z)|^p\,d\omega(z)   =\|f\|_{L^p(\mathbb R^N,d\omega)}^p.
\end{align*}
At $p=\infty$, the same argument shows that
\begin{align*}
    \|Uf\|_{L^\infty(\calC;\ell_m^\infty)}   &=\operatorname*{ess\,sup}_{x\in\calC}
      \max_{1\le\rho\le m}|f(\sigma_\rho x)|   =\max_{1\le\rho\le m}
      \operatorname*{ess\,sup}_{z\in\sigma_\rho\calC}|f(z)|\\
    &=\operatorname*{ess\,sup}_{z\in\mathbb R^N}|f(z)|    =\|f\|_{L^\infty(\mathbb R^N,d\omega)}.
\end{align*}
The definitions also show that
$$
        U^{-1}Uf=f\quad\text{a.e.\ on }\mathbb R^N,    \qquad      U(U^{-1}F)=F\quad\text{a.e.\ on }\calC.
$$
Consequently,
$$
        U:L^p(\mathbb R^N,d\omega)    \longrightarrow L^p(\calC,d\omega;\ell_m^p)
$$
is an isometric isomorphism for every $0<p\le\infty$, with the quasi-Banach interpretation when $0<p<1$.
In particular, $U$ is unitary when $p=2$.

Under this identification, the $G$-invariant subspace corresponds exactly to the diagonal subspace of functions
$F=(F_1,\ldots,F_m)$ satisfying
\begin{equation}\label{eq:diag}
        F_1(x)=F_2(x)=\cdots=F_m(x)   \quad\text{for a.e. }x\in\calC.
\end{equation}
One implication follows from the definition of $U$.  Conversely, suppose that $Uf$ satisfies \eqref{eq:diag}.  For
$\sigma\in G$ and almost every $z=\sigma_\rho x\in\Omega_\rho\setminus\mathcal W$, let $\tau$ be the unique index such
that $\sigma\sigma_\rho=\sigma_\tau$.  Then
\begin{align*}
    f(\sigma z)   &=f(\sigma\sigma_\rho x)   =f(\sigma_\tau x)
     =(Uf)_\tau(x)  =(Uf)_\rho(x)  =f(\sigma_\rho x)  =f(z).
\end{align*}
Thus $f$ is $G$-invariant almost everywhere.  The lifted functions used below are not required to belong to the
diagonal subspace.

\subsection{Lifted Poisson extension and chamber energy}

On $L^2(\calC,d\omega;\ell_m^2)$, define the lifted Poisson semigroup by
$$
        \mathbb P_t:=UP_tU^{-1},    \qquad t>0.
$$
Thus, if $F=Uf$, the chamber decomposition and the $G$-invariance of $d\omega$ show, for $x\in\calC$ and
$1\le\rho\le m$,
\begin{align*}
    (\mathbb P_t F)_\rho(x)  &=P_t f(\sigma_\rho x)
    =\sum_{\tau=1}^m\int_{\sigma_\tau\calC}   p_t(\sigma_\rho x,z)f(z)\,d\omega(z)
    =\sum_{\tau=1}^m\int_\calC   p_t(\sigma_\rho x,\sigma_\tau y)F_\tau(y)\,d\omega(y).
\end{align*}
Hence the matrix kernel of $\mathbb P_t$ is
$\bigl(p_t(\sigma_\rho x,\sigma_\tau y)\bigr)_{1\le\rho,\tau\le m}$ on $\calC$.

By \eqref{eq:cdist}, the bound \eqref{eq:psize} applies entrywise with
$d(\sigma_\rho x,\sigma_\tau y)=\|x-y\|$.

Let $F=Uf$, let $u=P_t f$, and write
$$
        \mathbb U(x,t):=\mathbb P_t F(x)     =\bigl(\mathbb U_1(x,t),\ldots,\mathbb U_m(x,t)\bigr).
$$
For each $\alpha\in R_+$, left multiplication by $\sigma_\alpha$ defines a permutation $r_\alpha$ of
$\{1,\ldots,m\}$ by
$$
        \sigma_{r_\alpha(\rho)}=\sigma_\alpha\sigma_\rho,    \qquad 1\le\rho\le m.
$$
If $x\in\calC\setminus\mathcal W$, then $x$ lies on no reflecting hyperplane.  Since
$\sigma_\rho^{-1}\alpha\in R$, it follows that
$$
        \langle\alpha,\sigma_\rho x\rangle   =\langle\sigma_\rho^{-1}\alpha,x\rangle\ne0.
$$
Consequently, for $x\in\calC\setminus\mathcal W$ and $t>0$, define
\begin{align}\label{eq:ceng}
    \mathfrak E_\calC(\mathbb U)(x,t)  &:=\sum_{\rho=1}^m
       \left(|\partial_t\mathbb U_\rho(x,t)|^2+|\nabla_x\mathbb U_\rho(x,t)|^2\right)
     +\sum_{\rho=1}^m\sum_{\alpha\in R_+}\kappa(\alpha)
      \frac{|\mathbb U_\rho(x,t)-\mathbb U_{r_\alpha(\rho)}(x,t)|^2}
       {\langle\alpha,\sigma_\rho x\rangle^2}.
\end{align}
For lifted smooth data, the quotient in the second sum equals
$$
    \frac{\mathbb U_\rho(x,t)-\mathbb U_{r_\alpha(\rho)}(x,t)}   {\langle\alpha,\sigma_\rho x\rangle}
    =\frac{u(\sigma_\rho x,t)-u(\sigma_\alpha\sigma_\rho x,t)}       {\langle\alpha,\sigma_\rho x\rangle},
$$
and has the removable extension described in Section~\ref{sec:pre}.  Since $\omega(\mathcal W)=0$, its values on
$\mathcal W$ do not affect the integral identities below.

For $x\in\calC\setminus\mathcal W$, $t>0$, $1\le\rho\le m$, and $\alpha\in R_+$, the chain rule and the
orthogonality of $\sigma_\rho$ imply
$$
        \partial_t\mathbb U_\rho(x,t)=\partial_tu(\sigma_\rho x,t),   \qquad
        \nabla_x\mathbb U_\rho(x,t)=\sigma_\rho^{-1}\nabla_xu(\sigma_\rho x,t).
$$
By the definition of $r_\alpha$, we also have
$$
        \mathbb U_{r_\alpha(\rho)}(x,t)=u(\sigma_\alpha\sigma_\rho x,t).
$$
These identities yield
\begin{align}\label{eq:elift}
    \sum_{\rho=1}^m\Gamma_{\kappa,t}(u)(\sigma_\rho x,t)  &=\sum_{\rho=1}^m
      \left( \left|\partial_tu(\sigma_\rho x,t)\right|^2+ \left|\nabla_xu(\sigma_\rho x,t) \right|^2\right)
    +\sum_{\rho=1}^m\sum_{\alpha\in R_+}\kappa(\alpha) \frac{|u(\sigma_\rho x,t)-u(\sigma_\alpha\sigma_\rho x,t)|^2}
       {\langle\alpha,\sigma_\rho x\rangle^2}\notag\\ 
       &=\mathfrak E_\calC(\mathbb P_tUf)(x,t). 
\end{align}
Since $\omega(\mathcal W)=0$, \eqref{eq:elift} holds for almost every $x\in\calC$.  In particular, the fiber-difference terms in
\eqref{eq:ceng} are exactly the reflection part of the lifted energy and vanish on the diagonal subspace
\eqref{eq:diag}.

For $\beta>0$ and a lifted datum $F=Uf$, define the chamber non-tangential maximal function by
$$
        \mathbb N_P^\beta F(x)  :=\sup_{\substack{t>0,\ y\in\calC\\ \|x-y\|<\beta t}}
          \max_{1\le\rho\le m} \left|(\mathbb P_t F)_\rho(y)\right|,    \qquad x\in\calC.
$$
The corresponding chamber square function is
$$
        \mathbb S_P F(x)    :=\left(   \int_0^\infty\int_{B_\calC(x,t)} \mathfrak E_\calC(\mathbb P_t F)(y,t)
        \frac{t\,d\omega(y)\,dt}{V_\calC(x,t)}    \right)^{1/2},  \qquad x\in\calC.
$$
Both functions are allowed to take the value $+\infty$.

\subsection{The lifted distribution estimate}

Write $\omega_\calC:=\omega|_\calC$.  The exact maximal-function identity and the square-function comparison below
transfer the full-space estimate to the chamber.

\begin{theorem}\label{thm:ch}
Let $\beta>1$ be the aperture fixed in Theorem~\ref{thm:main}.  There exists a structural constant $C>0$ such that,
whenever $f\in C_c^\infty(\mathbb R^N)$, $F=Uf$, and $\lambda>0$,
\begin{align}\label{eq:cgood}
    \omega_\calC\{x\in\calC:\mathbb S_P F(x)>\lambda\}  &\le C\omega_\calC\{x\in\calC:\mathbb N_P^\beta F(x)>\lambda\}
     +\frac{C}{\lambda^2}  \int_{\{x\in\calC:\mathbb N_P^\beta F(x)\le\lambda\}}
    \bigl(\mathbb N_P^\beta F(x)\bigr)^2\,d\omega(x).
\end{align}
Conversely, if \eqref{eq:cgood} holds uniformly for all such lifts, then \eqref{eq:main} follows.  Hence the two
distribution estimates are equivalent for this class of globally smooth lifts, up to a structural change in $C$.
\end{theorem}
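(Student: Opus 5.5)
The plan is to transfer Theorem~\ref{thm:main} through pointwise identities relating the chamber objects to the full-space objects at the orbit representatives, and then to use the measure-preserving orbit-unfolding formula. The maximal functions satisfy an exact identity, and the square functions satisfy a two-sided comparison.

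\emph{Step 1: the maximal functions.} For $x\in\calC$ I will show
\[
\mathbb N_P^\beta F(x)=\mathcal N_P^\beta f(x)=\mathcal N_P^\beta f(\sigma_\rho x)\qquad(1\le\rho\le m).
\]
Since $(\mathbb P_tF)_\rho(y)=u(\sigma_\rho y,t)$ and $\|x-y\|=d(x,\sigma_\rho y)$ by \eqref{eq:cdist}, the chamber supremum ranges over points $z=\sigma_\rho y$ with $d(x,z)<\beta t$. Conversely, every $z\in\mathbb R^N$ can be written as $z=\sigma_\rho y$ with $y\in\calC$, and then $d(x,z)=\|x-y\|$. Hence the two suprema coincide. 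The $G$-invariance of $\mathcal N_P^\beta f$ gives the second equality.

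\emph{Step 2: the square functions.} I will prove that for $x\in\calC$
\[
\mathbb S_PF(x)^2\simeq\sum_{\rho=1}^m\mathcal S_Pf(\sigma_\rho x)^2.
\]
By \eqref{eq:elift}, $\mathfrak E_\calC(\mathbb P_tF)(y,t)=\sum_\rho\Gamma_{\kappa,t}(u)(\sigma_\rho y,t)$. By \eqref{eq:cvol}, $V_\calC(x,t)\simeq V(x,t)=V(\sigma_\rho x,t)$. It remains to compare the regions, namely
\[
\bigcup_\rho\sigma_\rho B_\calC(x,t)=\{z:d(x,z)<t\}=\calO(x,t).
\]
This also follows from \eqref{eq:cdist}, and distinct $\rho$ give sets that overlap only on $\mathcal W$, which has measure zero. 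So
\[
\mathbb S_PF(x)^2\simeq\int_0^\infty\int_{\calO(x,t)}\Gamma_{\kappa,t}(u)\,\frac{t\,d\omega\,dt}{V(x,t)}=\mathcal S_Pf(x)^2.
\]
Since the orbit cone $\calO(x,t)$ and $V(x,t)$ are $G$-invariant in $x$, $\mathcal S_Pf$ is $G$-invariant. Therefore $c\,\mathcal S_Pf(x)\le\mathbb S_PF(x)\le C\,\mathcal S_Pf(x)$, with structural constants $c,C$.

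\emph{Step 3: transfer of the distribution estimate.} For any $G$-invariant measurable $\Phi\ge0$, the chamber decomposition gives
\[
\int_{\mathbb R^N}\Phi\,d\omega=|G|\int_\calC\Phi\,d\omega.
\]
Apply this to the indicators of the relevant level sets and to $(\mathcal N_P^\beta f)^2\one_{\{\mathcal N_P^\beta f\le\lambda\}}$. Then \eqref{eq:main}, applied at level $c\lambda$ (equivalently $\lambda/C$), yields \eqref{eq:cgood}. The comparison constant in the level is absorbed as follows: the right side of \eqref{eq:main} at level $s\lambda$ is controlled by the right side at level $\lambda$ up to a factor depending on $s$, by the two-sided layer comparison \eqref{eq:lay}, since $\omega\{F>\lambda\}+\lambda^{-2}\int_{F\le\lambda}F^2\simeq\lambda^{-2}\int\min(F,\lambda)^2$ is quasi-monotone in $\lambda$. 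The converse direction runs the same identities backwards.

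\emph{Main obstacle.} The main obstacle is Step 2's region identification together with this change of level. I expect both to be routine once \eqref{eq:cdist} and \eqref{eq:lay} are in hand.
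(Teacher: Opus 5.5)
Your proposal is correct and follows essentially the same route as the paper. It uses the exact maximal identity $\mathbb N_P^\beta F(x)=\mathcal N_P^\beta f(\sigma_\rho x)$, the chamber decomposition of the orbit cone via \eqref{eq:cdist} and \eqref{eq:elift} (giving $|G|^{-1/2}\mathbb S_PF\le\mathcal S_Pf(\sigma_\rho\cdot)\le\mathbb S_PF$ on $\calC$ by \eqref{eq:cvol}), and the unfolding of $G$-invariant level sets. Your absorption of the level change through $\lambda\mapsto\lambda^{-2}\int\min(F,\lambda)^2\,d\omega$ repackages the paper's pointwise bound $\one_{\{M_F>c_G\lambda\}}\le\one_{\{M_F>\lambda\}}+c_G^{-2}\lambda^{-2}M_F^2\one_{\{M_F\le\lambda\}}$: by \eqref{eq:lay} this quantity equals the bracket in \eqref{eq:main}, and it satisfies $\Phi(s\lambda)\le s^{-2}\Phi(\lambda)$ for $0<s\le1$. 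In the converse no level change is needed, since $\mathcal S_Pf\le\mathbb S_PF$ on $\calC$.
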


\begin{proof}
Let $m:=|G|$ and $c_G:=m^{-1/2}$.  Fix $1\le\rho\le m$.  First, the chamber decomposition and
\eqref{eq:cdist} show, for $x\in\calC$, that
\begin{align*}
    \mathcal N_P^\beta f(\sigma_\rho x)  &=\sup_{\substack{t>0,\ z\in\mathbb R^N\\d(\sigma_\rho x,z)<\beta t}}
      |P_t f(z)|  =\sup_{\substack{t>0,\ y\in\calC\\\|x-y\|<\beta t}}   \max_{1\le\tau\le m}|(\mathbb P_t F)_\tau(y)|
     =\mathbb N_P^\beta F(x).
\end{align*}

For the square function, the chamber walls are $\omega$-null.  Decomposing the spatial integral and using
\eqref{eq:cdist}, the $G$-invariance of $d\omega$, the identity $V(\sigma_\rho x,t)=V(x,t)$, and
\eqref{eq:elift}, we obtain
\begin{align*}
    \mathcal S_Pf(\sigma_\rho x)^2  &=\sum_{\tau=1}^m\int_0^\infty\int_\calC
      \one_{\{d(\sigma_\rho x,\sigma_\tau y)<t\}}   \Gamma_{\kappa,t}(P_t f)(\sigma_\tau y,t)    \frac{t\,d\omega(y)\,dt}{V(\sigma_\rho x,t)}\\
    &=\int_0^\infty\int_{B_\calC(x,t)}   \sum_{\tau=1}^m\Gamma_{\kappa,t}(P_t f)(\sigma_\tau y,t)
      \frac{t\,d\omega(y)\,dt}{V(x,t)}\\
    &=\int_0^\infty\int_{B_\calC(x,t)}    \mathfrak E_\calC(\mathbb P_t F)(y,t)  \frac{t\,d\omega(y)\,dt}{V(x,t)}.
\end{align*}
Since \eqref{eq:cvol} implies
$$
        V_\calC(x,t)\le V(x,t)\le mV_\calC(x,t),
$$
the preceding identity implies
\begin{equation}\label{eq:scomp}
        c_G\mathbb S_P F(x)  \le\mathcal S_Pf(\sigma_\rho x)  \le\mathbb S_P F(x).
\end{equation}

For $\sigma\in G$, the identities
$$
        d(\sigma x,y)=d(x,y),   \qquad     V(\sigma x,t)=V(x,t)
$$
show that both full-space functionals are $G$-invariant in the cone vertex:
$$
        \mathcal N_P^\beta f(\sigma x)=\mathcal N_P^\beta f(x),   \qquad
        \mathcal S_Pf(\sigma x)=\mathcal S_Pf(x).
$$
Let $M_F:=\mathbb N_P^\beta F$.  The preceding maximal identity and invariance imply
$$
        M_F(x)=\mathcal N_P^\beta f(\sigma_\rho x)    =\mathcal N_P^\beta f(x),    \qquad x\in\calC.
$$
For every $c>0$, the chamber decomposition now shows
\begin{align*}
    \omega\{\mathcal N_P^\beta f>c\}  =m\,\omega_\calC\{M_F>c\},\qquad  \omega\{\mathcal S_Pf>c\}
    =m\,\omega_\calC\{x\in\calC:\mathcal S_Pf(\sigma_\rho x)>c\},
\end{align*}
and
$$
    \int_{\{\mathcal N_P^\beta f\le c\}}(\mathcal N_P^\beta f)^2\,d\omega  =m\int_{\{x\in\calC:M_F(x)\le c\}}M_F(x)^2\,d\omega(x).
$$

By the first inequality in \eqref{eq:scomp}, we have
$$
        \{x\in\calC:\mathbb S_P F(x)>\lambda\}  \subset
        \{x\in\calC:\mathcal S_Pf(\sigma_\rho x)>c_G\lambda\}.
$$
Applying \eqref{eq:main} at the level $c_G\lambda$ and then decomposing the $G$-invariant terms into chambers, we obtain
\begin{align*}
    \omega_\calC\{\mathbb S_P F>\lambda\}  &\le\omega_\calC\{x:\mathcal S_Pf(\sigma_\rho x)>c_G\lambda\}
    =m^{-1}\omega\{\mathcal S_Pf>c_G\lambda\}\\
    &\le Cm^{-1}\omega\{\mathcal N_P^\beta f>c_G\lambda\}  +\frac{C}{mc_G^2\lambda^2}
      \int_{\{\mathcal N_P^\beta f\le c_G\lambda\}}   (\mathcal N_P^\beta f)^2\,d\omega\\
    &=C\omega_\calC\{M_F>c_G\lambda\}   +\frac{C}{c_G^2\lambda^2}
      \int_{\{x\in\calC:M_F(x)\le c_G\lambda\}}M_F(x)^2\,d\omega(x).
\end{align*}
Since $0<c_G\le1$, the two terms at level $c_G\lambda$ satisfy
$$
    \one_{\{M_F>c_G\lambda\}}  \le\one_{\{M_F>\lambda\}}    +\frac{M_F^2}{c_G^2\lambda^2}\one_{\{M_F\le\lambda\}},
    \qquad   \one_{\{M_F\le c_G\lambda\}}\le\one_{\{M_F\le\lambda\}}.
$$
Inserting their integrals into the preceding estimate, we obtain
$$
    \omega_\calC\{\mathbb S_P F>\lambda\}  \le C\omega_\calC\{M_F>\lambda\}  +\frac{C}{\lambda^2}
      \int_{\{x\in\calC:M_F(x)\le\lambda\}}M_F(x)^2\,d\omega(x).
$$
Since $c_G^{-2}=m$, the constant is structural, and \eqref{eq:cgood} follows.

Conversely, assume that \eqref{eq:cgood} holds uniformly for every lift $F=Uf$ with
$f\in C_c^\infty(\mathbb R^N)$.  The second inequality in \eqref{eq:scomp}, the $G$-invariance above, and the maximal
identity imply
\begin{align*}
    \omega\{\mathcal S_Pf>\lambda\}   &=m\omega_\calC\{x:\mathcal S_Pf(\sigma_\rho x)>\lambda\}
    \le m\omega_\calC\{\mathbb S_P F>\lambda\}\\
    &\le Cm\omega_\calC\{M_F>\lambda\}    +\frac{Cm}{\lambda^2} \int_{\{x\in\calC:M_F(x)\le\lambda\}}M_F(x)^2\,d\omega(x)\\
    &=C\omega\{\mathcal N_P^\beta f>\lambda\}   +\frac{C}{\lambda^2}
       \int_{\{\mathcal N_P^\beta f\le\lambda\}}    (\mathcal N_P^\beta f)^2\,d\omega.
\end{align*}
Thus \eqref{eq:main} follows from \eqref{eq:cgood} on the lifted range.
\end{proof}

\section{From orbit cones to Euclidean cones}\label{sec:euc}

\subsection{The orbit and Euclidean maximal functions}

The orbit cone is the union of finitely many Euclidean cones whose vertices lie in the same $G$-orbit.  Accordingly,
the corresponding maximal functions satisfy an exact envelope identity.  Alongside the orbit maximal function introduced in
Section~\ref{sec:loc}, define
$$
        \mathcal N_{P,\mathrm{euc}}^\beta f(x)     :=\sup_{\substack{t>0,\ y\in\mathbb R^N\\\|x-y\|<\beta t}}|P_t f(y)|.
$$

\begin{proposition}
Let $\beta>0$, and let $f$ be a function whose Poisson extension is defined on
$\mathbb R^N\times(0,\infty)$.  Then, for every $x\in\mathbb R^N$,
\begin{equation}\label{eq:nid}
        \mathcal N_P^\beta f(x)   =\max_{\sigma\in G}\mathcal N_{P,\mathrm{euc}}^\beta f(\sigma^{-1}x).
\end{equation}
Consequently, for every $0<p<\infty$,
\begin{equation}\label{eq:np}
        \|\mathcal N_{P,\mathrm{euc}}^\beta f\|_{L^p(d\omega)} \le\|\mathcal N_P^\beta f\|_{L^p(d\omega)}
        \le |G|^{1/p}\|\mathcal N_{P,\mathrm{euc}}^\beta f\|_{L^p(d\omega)},
\end{equation}
where the usual quasi-norm is understood when $0<p<1$.  In particular,
\begin{equation}\label{eq:n1}
        \|\mathcal N_{P,\mathrm{euc}}^\beta f\|_{L^1(d\omega)} \le\|\mathcal N_P^\beta f\|_{L^1(d\omega)}
        \le |G|\|\mathcal N_{P,\mathrm{euc}}^\beta f\|_{L^1(d\omega)}.
\end{equation}
\end{proposition}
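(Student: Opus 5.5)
The identity \eqref{eq:nid} follows from rewriting the orbit cone as a finite union of Euclidean cones. Since $d(x,y)=\min_{\sigma\in G}\|\sigma^{-1}x-y\|$, we have, for fixed $t>0$,
$$
\{y:d(x,y)<\beta t\}=\bigcup_{\sigma\in G}\{y:\|\sigma^{-1}x-y\|<\beta t\}.
$$
A supremum of $|P_tf(y)|$ over a finite union of index sets is the maximum of the suprema over the pieces. Taking the supremum over $t>0$ and interchanging it with the finite maximum therefore gives
$$
\mathcal N_P^\beta f(x)=\max_{\sigma\in G}\sup_{t>0,\ \|\sigma^{-1}x-y\|<\beta t}|P_tf(y)|=\max_{\sigma\in G}\mathcal N_{P,\mathrm{euc}}^\beta f(\sigma^{-1}x).
$$
No equivariance of $P_t$ is needed here, only the definition of $d$ and the fact that $G$ is a group. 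The argument holds with values in $[0,\infty]$.

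For the norm bounds \eqref{eq:np}, I would proceed in three steps.

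\emph{Lower bound.} Taking $\sigma=\mathrm{Id}$ in \eqref{eq:nid} gives the pointwise inequality $\mathcal N_{P,\mathrm{euc}}^\beta f\le\mathcal N_P^\beta f$. This yields the left inequality for every $0<p<\infty$.

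\emph{Upper bound, pointwise.} Replace the maximum by a sum of $p$-th powers:
$$
(\mathcal N_P^\beta f(x))^p\le\sum_{\sigma\in G}(\mathcal N_{P,\mathrm{euc}}^\beta f(\sigma^{-1}x))^p.
$$

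\emph{Upper bound, integrated.} Integrate against $d\omega$ and use the $G$-invariance $\int\Phi(\sigma^{-1}x)\,d\omega(x)=\int\Phi\,d\omega$ for non-negative measurable $\Phi$, established in Section~\ref{sec:pre}. Each of the $|G|$ terms then equals $\|\mathcal N_{P,\mathrm{euc}}^\beta f\|_p^p$. Taking $p$-th roots gives the factor $|G|^{1/p}$. This step works equally for $0<p<1$ at the level of $p$-th powers, so the quasi-norm case needs no separate treatment.

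Finally, \eqref{eq:n1} is the case $p=1$.

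The only point I expect to need care is measurability. The integrals only make sense if $\mathcal N_{P,\mathrm{euc}}^\beta f$ and its compositions with $\sigma^{-1}$ are measurable. I would handle this exactly as for $\mathcal N_P^\beta$ in Section~\ref{sec:loc}: $\{\mathcal N_{P,\mathrm{euc}}^\beta f>a\}$ is a union of open Euclidean balls $B(y,\beta t)$, so the function is lower semicontinuous. Composition with an orthogonal map preserves lower semicontinuity, so the compositions are measurable as well.
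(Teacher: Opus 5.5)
Your proposal is correct and follows the paper's proof essentially step by step. It writes the orbit cone as a finite union of Euclidean cones to get the identity, takes $\sigma=\mathrm{Id}$ for the lower bound, and bounds the maximum by the sum of $p$-th powers together with the $G$-invariance of $d\omega$ for the upper bound. Your lower-semicontinuity remark on measurability is a small, valid addition that the paper leaves implicit.
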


\begin{proof}
For $t>0$ and $x,y\in\mathbb R^N$, the orbit condition is equivalent to
$$
        d(x,y)<\beta t  \quad\Longleftrightarrow\quad   \|x-\sigma y\|<\beta t
        \quad\text{for some }\sigma\in G.
$$
Since $G$ is finite and every $\sigma\in G$ is orthogonal, we obtain
\begin{align*}
    \mathcal N_P^\beta f(x)   &=\sup_{\substack{t>0,\ y\in\mathbb R^N\\d(x,y)<\beta t}}|P_t f(y)|
    =\max_{\sigma\in G}   \sup_{\substack{t>0,\ y\in\mathbb R^N\\\|x-\sigma y\|<\beta t}}|P_t f(y)|\\
    &=\max_{\sigma\in G}   \sup_{\substack{t>0,\ y\in\mathbb R^N\\\|\sigma^{-1}x-y\|<\beta t}}|P_t f(y)|
    =\max_{\sigma\in G}\mathcal N_{P,\mathrm{euc}}^\beta f(\sigma^{-1}x),
\end{align*}
which is \eqref{eq:nid}.

The term $\sigma=\mathrm{Id}$ in \eqref{eq:nid} implies
$$
        \mathcal N_{P,\mathrm{euc}}^\beta f(x)\le\mathcal N_P^\beta f(x).
$$
For $0<p<\infty$, the reverse bound follows from \eqref{eq:nid} and the $G$-invariance of $d\omega$:
\begin{align*}
    \|\mathcal N_P^\beta f\|_{L^p(d\omega)}^p  &=\int_{\mathbb R^N}
      \max_{\sigma\in G}\bigl(\mathcal N_{P,\mathrm{euc}}^\beta f(\sigma^{-1}x)\bigr)^p\,d\omega(x)\\
    &\le\sum_{\sigma\in G}\int_{\mathbb R^N}   \bigl(\mathcal N_{P,\mathrm{euc}}^\beta f(\sigma^{-1}x)\bigr)^p\,d\omega(x)\\
    &=\sum_{\sigma\in G}\int_{\mathbb R^N}   \bigl(\mathcal N_{P,\mathrm{euc}}^\beta f(z)\bigr)^p\,d\omega(z)\\
    &=|G|\|\mathcal N_{P,\mathrm{euc}}^\beta f\|_{L^p(d\omega)}^p.
\end{align*}
Taking the $p$-th root establishes \eqref{eq:np}, whose specialization to $p=1$ is \eqref{eq:n1}.
\end{proof}

\subsection{The Euclidean-cone area function}

For comparison with the Euclidean non-tangential maximal function, define
\begin{equation}\label{eq:seuc}
    S_{P,\mathrm{euc}}f(x)  :=\left( \int_0^\infty\int_{\|x-y\|<t}  \Gamma_{\kappa,t}(P_t f)(y,t)\,
    \frac{t\,d\omega(y)\,dt}{V(x,t)}   \right)^{1/2}.
\end{equation}
Thus only the cone in \eqref{eq:area} is changed; the intrinsic Dunkl energy is unchanged.

Since $\mathrm{Id}\in G$, we have
$$
        \|x-y\|<t   \quad\Longrightarrow\quad     d(x,y)\le\|x-y\|<t.
$$
Thus the Euclidean cone is contained in the orbit cone.  Since $\Gamma_{\kappa,t}\ge0$ and the denominators agree,
whenever both sides are defined in $[0,\infty]$,
\begin{align}\label{eq:sdom}
    S_{P,\mathrm{euc}}f(x)   &=\left(\int_0^\infty\int_{\|x-y\|<t}
      \Gamma_{\kappa,t}(P_t f)(y,t)\frac{t\,d\omega(y)\,dt}{V(x,t)} \right)^{1/2}\notag\\
    &\le\left(  \int_0^\infty\int_{d(x,y)<t}   \Gamma_{\kappa,t}(P_t f)(y,t)\frac{t\,d\omega(y)\,dt}{V(x,t)}  \right)^{1/2}
     =\mathcal S_Pf(x).
\end{align}

\subsection{Integrating the good-\texorpdfstring{$\lambda$}{lambda} inequality}

\begin{proposition}
Let $\beta>1$ be the structural aperture fixed in Theorem~\ref{thm:main}.  Then there exists a structural constant
$C>0$ such that, for every $f\in C_c^\infty(\mathbb R^N)$,
\begin{equation}\label{eq:l1}
        \|\mathcal S_Pf\|_{L^1(d\omega)}   \le C\|\mathcal N_P^\beta f\|_{L^1(d\omega)}.
\end{equation}
Consequently,
$$
        \|S_{P,\mathrm{euc}}f\|_{L^1(d\omega)}    \le C\|\mathcal N_{P,\mathrm{euc}}^\beta f\|_{L^1(d\omega)}.
$$
Both estimates are understood in $[0,\infty]$.
\end{proposition}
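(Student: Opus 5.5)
The plan is to integrate the layer-cake form \eqref{eq:layer} of Theorem~\ref{thm:main} over $\lambda\in(0,\infty)$. By the layer-cake formula,
$$
\|\mathcal S_Pf\|_{L^1(d\omega)}=\int_0^\infty\omega\{\mathcal S_Pf>\lambda\}\,d\lambda .
$$
Inserting \eqref{eq:layer}, the first term contributes $C\int_0^\infty\omega\{\mathcal N_P^\beta f>\lambda\}\,d\lambda=C\|\mathcal N_P^\beta f\|_{L^1(d\omega)}$. For the second term, Tonelli's theorem (everything is non-negative, so manipulations in $[0,\infty]$ are legitimate) gives
$$
\int_0^\infty\frac{1}{\lambda^2}\int_0^\lambda s\,\omega\{\mathcal N_P^\beta f>s\}\,ds\,d\lambda
=\int_0^\infty s\,\omega\{\mathcal N_P^\beta f>s\}\Bigl(\int_s^\infty\lambda^{-2}\,d\lambda\Bigr)ds
=\int_0^\infty\omega\{\mathcal N_P^\beta f>s\}\,ds ,
$$
which is again $\|\mathcal N_P^\beta f\|_{L^1(d\omega)}$. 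This proves \eqref{eq:l1}. If the right side is infinite there is nothing to prove. The only point to check is that \eqref{eq:layer} holds for every $\lambda>0$ with a constant independent of $\lambda$, which Theorem~\ref{thm:main} provides for $f\in C_c^\infty(\mathbb R^N)$.

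For the Euclidean consequence, chain three estimates. By \eqref{eq:sdom}, $S_{P,\mathrm{euc}}f\le\mathcal S_Pf$ pointwise. Then \eqref{eq:l1} gives
$$
\|S_{P,\mathrm{euc}}f\|_{L^1(d\omega)}\le C\|\mathcal N_P^\beta f\|_{L^1(d\omega)}.
$$
Finally, the right inequality in \eqref{eq:n1} gives $\|\mathcal N_P^\beta f\|_{L^1(d\omega)}\le|G|\,\|\mathcal N_{P,\mathrm{euc}}^\beta f\|_{L^1(d\omega)}$. The extra factor $|G|$ is structural and is absorbed into $C$.

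There is no real obstacle here, since the argument is a Fubini computation. The only care needed is to note that $\int_s^\infty\lambda^{-2}\,d\lambda=s^{-1}$ exactly cancels the weight $s$. This is precisely why the exponent $2$ in the good-$\lambda$ inequality permits every $p<2$, and in particular $p=1$.
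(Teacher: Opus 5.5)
Your proposal is correct and is essentially the paper's own proof: you integrate \eqref{eq:layer} over $\lambda$ and apply Tonelli with $\int_s^\infty\lambda^{-2}\,d\lambda=s^{-1}$ to get the bound $2C_0\|\mathcal N_P^\beta f\|_{L^1(d\omega)}$, then chain \eqref{eq:sdom}, \eqref{eq:l1}, and \eqref{eq:n1}, absorbing $|G|$ into the constant. Nothing is missing.
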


\begin{proof}
Let $C_0$ be the structural constant in \eqref{eq:layer}, and let $F:=\mathcal N_P^\beta f$.
If $\|F\|_{L^1(d\omega)}=\infty$, then \eqref{eq:l1} is immediate.  Otherwise, the layer-cake formula,
\eqref{eq:layer}, and Tonelli's theorem give
\begin{align*}
    \|\mathcal S_Pf\|_{L^1(d\omega)}  &=\int_0^\infty\omega\{\mathcal S_Pf>\lambda\}\,d\lambda\\
    &\le C_0\int_0^\infty\omega\{F>\lambda\}\,d\lambda    +C_0\int_0^\infty\lambda^{-2}
      \int_0^\lambda s\,\omega\{F>s\}\,ds\,d\lambda\\
    &=C_0\|F\|_{L^1(d\omega)}  +C_0\int_0^\infty s\,\omega\{F>s\}  \left(\int_s^\infty\lambda^{-2}\,d\lambda\right)ds\\
    &=C_0\|F\|_{L^1(d\omega)}   +C_0\int_0^\infty\omega\{F>s\}\,ds    =2C_0\|F\|_{L^1(d\omega)}.
\end{align*}
Hence \eqref{eq:l1} holds with $C=2C_0$.

Finally, \eqref{eq:sdom}, \eqref{eq:l1}, and \eqref{eq:n1} imply
\begin{align*}
    \|S_{P,\mathrm{euc}}f\|_{L^1(d\omega)}  &\le\|\mathcal S_Pf\|_{L^1(d\omega)}
     \le C\|\mathcal N_P^\beta f\|_{L^1(d\omega)}
    \le C|G|\|\mathcal N_{P,\mathrm{euc}}^\beta f\|_{L^1(d\omega)}.
\end{align*}
Absorbing the structural factor $|G|$ into $C$ proves the Euclidean-cone estimate.
\end{proof}

\section{Hardy-space and \texorpdfstring{$L^p$}{Lp} consequences}\label{sec:h1}

\subsection{The Hardy-space endpoint}

Atomic density and lower semicontinuity extend the smooth endpoint estimate to the Poisson maximal Hardy space.
Following \cite[(2.7)]{ADH}, define
$$
        \mathcal M_Pf(x)    =\sup_{\|x-y\|<t}|P_tf(y)|     =\mathcal N_{P,\mathrm{euc}}^1f(x),
$$
and
$$
        H^1_{\mathrm{max},P}     =\bigl\{f\in L^1(d\omega):\mathcal M_Pf\in L^1(d\omega)\bigr\}.
$$
Equip this space with the norm
$$
        \|f\|_{H^1_{\mathrm{max},P}}    =\|\mathcal M_Pf\|_{L^1(d\omega)}.
$$
For $f\in H^1_{\mathrm{max},P}$, apply \cite[Lemma~10.2]{ADH} to $u(t,y)=P_tf(y)$ with $a=\beta$ and $b=1$.
Then
\begin{align}\label{eq:ap}
    \|\mathcal N_{P,\mathrm{euc}}^\beta f\|_{L^1(d\omega)}
    &\le C(\beta+1)^{\mathbf N}    \|\mathcal N_{P,\mathrm{euc}}^1f\|_{L^1(d\omega)}\notag\\
    &=C(\beta+1)^{\mathbf N}\|f\|_{H^1_{\mathrm{max},P}}.
\end{align}

Recall that a $(1,2)$-atom is supported in a Euclidean ball $B$, has integral zero, and satisfies
$\|a\|_{L^2(d\omega)}\le\omega(B)^{-1/2}$.  Let
$$
        \mathcal D_0     =\left\{\phi\in C_c^\infty(\mathbb R^N):
        \int_{\mathbb R^N}\phi\,d\omega=0\right\}.
$$
If $0\ne\phi\in\mathcal D_0$ and $\operatorname{supp}\phi\subset B$, then
$$
        a_\phi:=\frac{\phi}{\omega(B)^{1/2}\|\phi\|_{L^2(d\omega)}}.
$$
The support satisfies $\operatorname{supp}a_\phi\subset B$.  Since $\phi\in\mathcal D_0$, its cancellation is
$\int_{\mathbb R^N}a_\phi\,d\omega=0$.  Its normalization is
$\|a_\phi\|_{L^2(d\omega)}=\omega(B)^{-1/2}$.
Thus $a_\phi$ is a $(1,2)$-atom and
$\phi=\omega(B)^{1/2}\|\phi\|_{L^2(d\omega)}a_\phi$.  Hence every element of $\mathcal D_0$ is a scalar multiple of a
$(1,2)$-atom.

\begin{lemma}\label{lem:dense}
The space $\mathcal D_0$ is dense in $H^1_{\mathrm{max},P}$.
\end{lemma}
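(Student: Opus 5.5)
The plan is to deduce density from the atomic characterization of $H^1_{\mathrm{max},P}$ established in \cite{ADH}. That result identifies $H^1_{\mathrm{max},P}$ with the atomic space built from $(1,2)$-atoms, with equivalent norms. Hence every $f\in H^1_{\mathrm{max},P}$ admits a decomposition $f=\sum_j\lambda_ja_j$ with $\sum_j|\lambda_j|\lesssim\|f\|_{H^1_{\mathrm{max},P}}$. The series converges in $H^1_{\mathrm{max},P}$, because the partial sums converge in the atomic norm and $\|\mathcal M_Pa\|_{L^1(d\omega)}\le C$ uniformly over atoms; this uniform bound is part of the same characterization. Truncating the series therefore reduces the lemma to approximating a single $(1,2)$-atom $a$, supported in a ball $B=B(x_0,r)$, by elements of $\mathcal D_0$ in the $H^1_{\mathrm{max},P}$ norm.

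For a single atom I will use smoothing together with a mean correction. Since $C_c^\infty$ is dense in $L^2(d\omega|_{B})$, choose $\psi_n\in C_c^\infty(B(x_0,2r))$ with $\|\psi_n-a\|_{L^2(d\omega)}\to0$. Fix $\theta\in C_c^\infty(B(x_0,2r))$ with $\int\theta\,d\omega=1$, and set
$$
\phi_n:=\psi_n-\Bigl(\int\psi_n\,d\omega\Bigr)\theta\in\mathcal D_0 .
$$
The coefficient $\int\psi_n\,d\omega=\int(\psi_n-a)\,d\omega$ tends to zero by Cauchy--Schwarz on $B(x_0,2r)$. Thus $a-\phi_n$ is supported in $B(x_0,2r)$, has $d\omega$-integral zero, and has $L^2(d\omega)$ norm $\eps_n\to0$.

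It then remains to note that $(a-\phi_n)/\bigl(\eps_n\,\omega(B(x_0,2r))^{1/2}\bigr)$ is a $(1,2)$-atom. By the uniform atomic bound, this gives
$$
\|a-\phi_n\|_{H^1_{\mathrm{max},P}}\le C\eps_n\,\omega(B(x_0,2r))^{1/2}\longrightarrow0 .
$$
Combining this with the truncation of the atomic series by a diagonal $\eps/2$ argument proves density.

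The main obstacle is justifying the two imported facts from \cite{ADH}: the uniform bound $\|\mathcal M_Pa\|_{L^1}\le C$ for $(1,2)$-atoms, and the convergence of atomic decompositions in the maximal norm. If only the norm equivalence is available, I would derive the uniform bound directly from \eqref{eq:psize}. On $\mathcal O(x_0,4r)$, I would use Cauchy--Schwarz together with the $L^2$ boundedness of $\mathcal M_P\le C\mathcal M_{\mathcal O}$ from Corollary~\ref{cor:nmax}. Off $\mathcal O(x_0,4r)$, I would use the cancellation of the atom and the smoothness of the kernel in $y$ (\cite[Proposition~5.1]{ADH}) to get the decay $\frac{r}{d(x,x_0)}\,V(x_0,d(x,x_0))^{-1}$, which is integrable over dyadic orbit annuli by doubling.
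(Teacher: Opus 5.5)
Your proposal is correct and is essentially the paper's argument. Both identify $H^1_{\mathrm{max},P}$ with the $(1,2)$-atomic space with equivalent norms (the paper obtains this from \cite[Theorems~2.1 and~2.2]{ADH} together with \cite[Theorem~1.6]{DH1}), reduce to finite atomic sums, and approximate each atom by $h_j-(\int h_j\,d\omega)\psi\in\mathcal D_0$ with $h_j,\psi\in C_c^\infty(2B)$, noting that the error is a small multiple of a $(1,2)$-atom on $2B$. Your fallback direct estimate of $\|\mathcal M_Pa\|_{L^1(d\omega)}$ is unnecessary, since the norm equivalence already gives $\|\cdot\|_{H^1_{\mathrm{max},P}}\lesssim\|\cdot\|_{H^1_{(1,2)}}$.
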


\begin{proof}
By \cite[Theorems~2.1 and~2.2]{ADH}, $H^1_{\mathrm{max},P}$ coincides with the Dunkl heat-maximal Hardy space.
Theorem~1.6 of \cite{DH1}, applied with $q=2$, then identifies this space with
$$
        H^1_{\mathrm{max},P}   =H^1_{(1,2)}(\mathbb R^N,\|\cdot\|,d\omega).
$$
The norms are equivalent, so it suffices to approximate each $(1,2)$-atom by elements of $\mathcal D_0$.

Let $a$ be such an atom associated with a Euclidean ball $B$.
Choose $h_j\in C_c^\infty(2B)$ such that $h_j\to a$ in $L^2(d\omega)$, and fix
$\psi\in C_c^\infty(2B)$ with $\int\psi\,d\omega=1$.  Let
$$
        c_j=\int_{\mathbb R^N}h_j\,d\omega,   \qquad    a_j=h_j-c_j\psi.
$$
Then $a_j\in\mathcal D_0$.  Moreover, the cancellation of $a$ implies
$$
    |c_j|   =\left|\int_{2B}(h_j-a)\,d\omega\right|   \le\omega(2B)^{1/2}\|h_j-a\|_{L^2(d\omega)}.
$$
Substituting this bound, we obtain
\begin{align*}
    \|a_j-a\|_{L^2(d\omega)}  &\le\|h_j-a\|_{L^2(d\omega)}+|c_j|\|\psi\|_{L^2(d\omega)}
    \le\bigl(1+\omega(2B)^{1/2}\|\psi\|_{L^2(d\omega)}\bigr)     \|h_j-a\|_{L^2(d\omega)}   \longrightarrow0.
\end{align*}
Since $a_j-a$ is supported in $2B$ and has integral zero, normalization as a $(1,2)$-atom shows that
$$
        \|a_j-a\|_{H^1_{(1,2)}}  \le\omega(2B)^{1/2}\|a_j-a\|_{L^2(d\omega)}   \longrightarrow0.
$$
For a finite atomic sum $g=\sum_{\ell=1}^L\lambda_\ell a_\ell$, choose the preceding approximants
$a_{\ell,j}\in\mathcal D_0$.  Then
$$
        \bigg\|\sum_{\ell=1}^L\lambda_\ell a_{\ell,j}-g\bigg\|_{H^1_{(1,2)}}    \le\sum_{\ell=1}^L|\lambda_\ell|
        \|a_{\ell,j}-a_\ell\|_{H^1_{(1,2)}}    \longrightarrow0.
$$
Finite atomic sums are dense in $H^1_{(1,2)}$, so the preceding approximation proves the density of $\mathcal D_0$.
\end{proof}

Let $f\in L^1(d\omega)$.  For $m\in\mathbb N_0$, $\nu\in\mathbb N_0^N$, and
$D=\partial_t^m\partial_y^\nu$, \cite[Proposition~5.1(c)]{ADH} and \eqref{eq:psize} imply
\begin{align*}
    |Dp_t(y,z)|  &\le Ct^{-m-|\nu|}p_t(y,z)  \le\frac{C}{t^{m+|\nu|}V(y,z,t+d(y,z))}\,
       \frac{t}{t+d(y,z)}    \le\frac{C}{t^{m+|\nu|}V(y,t)}.
\end{align*}
Fix $K\Subset\mathbb R^{N+1}_+$.  Since $t\ge\delta_K>0$ on $K$, \cite[(3.1), p.~7]{ADH} gives
$$
        V(y,t)\ge c\,t^{\mathbf N}\ge c\,\delta_K^{\mathbf N},     \qquad (y,t)\in K.
$$
Therefore,
$$
        \sup_{(y,t)\in K}\int_{\mathbb R^N}|Dp_t(y,z)f(z)|\,d\omega(z)
        \le C_{D,K}\|f\|_{L^1(d\omega)}<\infty,
$$
and differentiation under the integral shows that $P_tf$ is smooth on $\mathbb R^{N+1}_+$.
We henceforth define $\mathcal S_Pf$ and $S_{P,\mathrm{euc}}f$ by \eqref{eq:area} and \eqref{eq:seuc},
respectively, as measurable functions with values in $[0,\infty]$.

The passage to $H^1_{\mathrm{max},P}$ rests on the following lower semicontinuity.

\begin{lemma}\label{lem:lsc}
Let $\mathfrak S$ denote either $\mathcal S_P$ or $S_{P,\mathrm{euc}}$.  If $f_j\to f$ in $L^1(d\omega)$, then
$$
        \mathfrak Sf(x)\le\liminf_{j\to\infty}\mathfrak Sf_j(x),     \qquad x\in\mathbb R^N.
$$
\end{lemma}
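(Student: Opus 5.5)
The plan is to apply Fatou's lemma to the defining integral, after showing that the integrand converges pointwise on the upper half-space. Fix $x\in\mathbb R^N$ and write $u_j=P_tf_j$ and $u=P_tf$. Both $\mathcal S_P$ and $S_{P,\mathrm{euc}}$ have the form
$$
    \mathfrak Sf(x)^2=\int_0^\infty\int_{\mathbb R^N}\one_{\Lambda_x}(y,t)\,\Gamma_{\kappa,t}(P_tf)(y,t)\,\frac{t\,d\omega(y)\,dt}{V(x,t)},
$$
where $\Lambda_x$ is either the orbit cone $\{d(x,y)<t\}$ or the Euclidean cone $\{\|x-y\|<t\}$. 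The weight $\one_{\Lambda_x}t/V(x,t)$ is non-negative and does not depend on $f$. So if $\Gamma_{\kappa,t}(u_j)(y,t)\to\Gamma_{\kappa,t}(u)(y,t)$ for every $(y,t)\in\mathbb R^{N+1}_+$, or merely for almost every point, Fatou's lemma gives $\mathfrak Sf(x)^2\le\liminf_j\mathfrak Sf_j(x)^2$. Taking square roots preserves the inequality, and the argument works in $[0,\infty]$ with no finiteness assumptions.

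The key step is pointwise convergence of the energy density. I will use the derivative kernel bound displayed just before the lemma: for $D=\partial_t^m\partial_y^\nu$,
$$
    \sup_{(y,t)\in K}\int|Dp_t(y,z)|\,|g(z)|\,d\omega(z)\le C_{D,K}\|g\|_{L^1(d\omega)}.
$$
Applying this with $g=f_j-f$ shows that $D(u_j-u)\to0$ uniformly on every compact $K\Subset\mathbb R^{N+1}_+$, for all orders $m,|\nu|\le1$, and in fact for all orders. Hence $\partial_tu_j\to\partial_tu$ and $\nabla_yu_j\to\nabla_yu$ pointwise.

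For the reflection term, fix $(y,t)$ with $y\notin\mathcal W$. Then $u_j(\sigma_\alpha y,t)\to u(\sigma_\alpha y,t)$ and $\langle\alpha,y\rangle\ne0$, so each quotient $|u_j(y,t)-u_j(\sigma_\alpha y,t)|^2/\langle\alpha,y\rangle^2$ converges to the corresponding expression for $u$. The wall set $\mathcal W\times(0,\infty)$ is null for $d\omega\,dt$, so convergence almost everywhere already suffices for Fatou.

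Alternatively, the wall points can be handled directly to get convergence everywhere. The continuous extension of the quotient equals $\int_0^1\partial_\alpha u(y-s\langle\alpha,y\rangle\alpha,t)\,ds$, up to the normalization $\|\alpha\|^2=2$, and the first-derivative convergence is uniform on the compact segment. This step is not needed, though. The only point requiring care, and the main potential obstacle, is the $L^1$-to-locally-uniform derivative convergence, and the displayed kernel estimate supplies exactly that.
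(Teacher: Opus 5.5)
Your proposal is correct and follows essentially the same route as the paper. Like the paper, you use the bound $|DP_t(f_j-f)(y)|\le\sup_z|Dp_t(y,z)|\,\|f_j-f\|_{L^1(d\omega)}$ to get convergence of $\partial_t$, $\nabla_y$, and, off $\mathcal W$, the reflection quotients, and then apply Fatou's lemma on the fixed cone. Your wall-point formula $\int_0^1\partial_\alpha u(y-s\langle\alpha,y\rangle\alpha,t)\,ds$ is a harmless extra (it holds exactly, with no normalization factor), since almost-everywhere convergence already suffices.
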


\begin{proof}
For $y\notin\mathcal W$, let
$$
    \mathbf D_{\kappa,t}U(y,t)  =\left(  \partial_tU(y,t),   \nabla_yU(y,t),
    \left(  \sqrt{\kappa(\alpha)}  \frac{U(y,t)-U(\sigma_\alpha y,t)}{\langle\alpha,y\rangle}  \right)_{\alpha\in R_+}  \right).
$$
Then
$$
        \Gamma_{\kappa,t}(U)(y,t)    =|\mathbf D_{\kappa,t}U(y,t)|^2.
$$
For
$D\in\{\mathrm{Id},\partial_t,\partial_{y_1},\ldots,\partial_{y_N}\}$,
the preceding kernel bound implies
\begin{align*}
    |DP_t(f_j-f)(y)|  &=\left|\int_{\mathbb R^N}Dp_t(y,z)(f_j-f)(z)\,d\omega(z)\right|
    \le\sup_{z\in\mathbb R^N}|Dp_t(y,z)|\,\|f_j-f\|_{L^1(d\omega)}    \longrightarrow0.
\end{align*}
Applying the same estimate with $D=\mathrm{Id}$ at $\sigma_\alpha y$ also shows, for $y\notin\mathcal W$, that
$$
        \frac{P_tf_j(y)-P_tf_j(\sigma_\alpha y)}{\langle\alpha,y\rangle}   \longrightarrow
        \frac{P_tf(y)-P_tf(\sigma_\alpha y)}{\langle\alpha,y\rangle}.
$$
Since $\omega(\mathcal W)=0$, it follows that
$$
        \mathbf D_{\kappa,t}P_tf_j(y)\longrightarrow\mathbf D_{\kappa,t}P_tf(y)
$$
for $(d\omega\otimes dt)$-almost every $(y,t)$.

For fixed $x$, let $\Omega_x(t)=\{y:d(x,y)<t\}$ for $\mathfrak S=\mathcal S_P$ and
$\Omega_x(t)=\{y:\|x-y\|<t\}$ for $\mathfrak S=S_{P,\mathrm{euc}}$.
Fatou's lemma now implies
\begin{align*}
    \mathfrak Sf(x)^2   &=\int_0^\infty\int_{\Omega_x(t)}   \left|\mathbf D_{\kappa,t}P_tf(y) \right|^2
      \frac{t\,d\omega(y)\,dt}{V(x,t)}\\
    &\le\liminf_{j\to\infty}   \int_0^\infty\int_{\Omega_x(t)}   \left|\mathbf D_{\kappa,t}P_tf_j(y)\right|^2
      \frac{t\,d\omega(y)\,dt}{V(x,t)}   =\liminf_{j\to\infty}\mathfrak Sf_j(x)^2.
\end{align*}
Taking square roots establishes the asserted lower semicontinuity.
\end{proof}

\begin{theorem}\label{thm:h1}
Let $\beta>1$ be the aperture fixed in Theorem~\ref{thm:main}.  Then there exists $C>0$ such that, for every
$f\in H^1_{\mathrm{max},P}$, the intrinsic area functionals $\mathcal S_Pf$ and $S_{P,\mathrm{euc}}f$ belong to
$L^1(d\omega)$ and satisfy
$$
        \|\mathcal S_Pf\|_{L^1(d\omega)}    +\|S_{P,\mathrm{euc}}f\|_{L^1(d\omega)}
        \le C\|f\|_{H^1_{\mathrm{max},P}}.
$$
\end{theorem}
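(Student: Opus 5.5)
The plan is to prove the bound first on the dense class $\mathcal D_0$ and then pass to all of $H^1_{\mathrm{max},P}$ by density and lower semicontinuity.

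\emph{Step 1: smooth data.} Let $\phi\in\mathcal D_0\subset C_c^\infty(\mathbb R^N)$. By \eqref{eq:l1}, $\|\mathcal S_P\phi\|_{L^1(d\omega)}\le C\|\mathcal N_P^\beta\phi\|_{L^1(d\omega)}$. The comparison \eqref{eq:n1} then gives $\|\mathcal N_P^\beta\phi\|_{L^1}\le|G|\,\|\mathcal N_{P,\mathrm{euc}}^\beta\phi\|_{L^1}$. Finally, the aperture change \eqref{eq:ap} gives $\|\mathcal N_{P,\mathrm{euc}}^\beta\phi\|_{L^1}\le C(\beta+1)^{\mathbf N}\|\phi\|_{H^1_{\mathrm{max},P}}$. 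Here we need $\phi\in H^1_{\mathrm{max},P}$, which holds because $\phi$ is a multiple of a $(1,2)$-atom and atoms lie in $H^1_{(1,2)}=H^1_{\mathrm{max},P}$ (as used in Lemma~\ref{lem:dense}). Combining these with the pointwise bound \eqref{eq:sdom}, we obtain
$$
\|\mathcal S_P\phi\|_{L^1(d\omega)}+\|S_{P,\mathrm{euc}}\phi\|_{L^1(d\omega)}\le 2\|\mathcal S_P\phi\|_{L^1(d\omega)}\le C\|\phi\|_{H^1_{\mathrm{max},P}},
$$
where $\beta$ is fixed, so $C$ is structural.

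\emph{Step 2: density and Fatou.} Let $f\in H^1_{\mathrm{max},P}$. By Lemma~\ref{lem:dense}, choose $\phi_j\in\mathcal D_0$ with $\|\phi_j-f\|_{H^1_{\mathrm{max},P}}\to0$. We also need $\phi_j\to f$ in $L^1(d\omega)$. This follows from $|g|\le\mathcal M_Pg$ a.e., which holds because $P_tg\to g$ a.e. along a subsequence (for $g\in L^1$ this follows from Lemma~\ref{lem:pmax}-type maximal control and approximation). Hence $\|g\|_{L^1}\le\|g\|_{H^1_{\mathrm{max},P}}$ for $g=\phi_j-f$. Lemma~\ref{lem:lsc} then gives $\mathfrak Sf\le\liminf_j\mathfrak S\phi_j$ pointwise for $\mathfrak S\in\{\mathcal S_P,S_{P,\mathrm{euc}}\}$. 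Fatou's lemma yields
$$
\|\mathfrak Sf\|_{L^1}\le\liminf_j\|\mathfrak S\phi_j\|_{L^1}\le C\lim_j\|\phi_j\|_{H^1_{\mathrm{max},P}}=C\|f\|_{H^1_{\mathrm{max},P}},
$$
which proves both finiteness and the bound.

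\emph{Main obstacle.} Everything hard is already contained in Theorem~\ref{thm:main}, \eqref{eq:ap}, and Lemmas~\ref{lem:dense} and~\ref{lem:lsc}. The one delicate point is justifying that $H^1_{\mathrm{max},P}$ convergence implies $L^1(d\omega)$ convergence, so that Lemma~\ref{lem:lsc} applies. I would handle it through the inequality $|g|\le\mathcal M_Pg$ a.e. An alternative route is the continuous embedding $H^1_{(1,2)}\hookrightarrow L^1$, which comes from the atomic decomposition: each atom has $L^1$ norm at most $1$ by Cauchy--Schwarz. Combined with the norm equivalence, this gives $\|g\|_{L^1}\lesssim\|g\|_{H^1_{\mathrm{max},P}}$.
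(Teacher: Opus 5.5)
Your proposal is correct and follows the paper's proof essentially step for step. It uses the chain \eqref{eq:sdom}, \eqref{eq:l1}, \eqref{eq:n1}, \eqref{eq:ap} on $\mathcal D_0$, then density (Lemma~\ref{lem:dense}), $L^1(d\omega)$ convergence via $|g|\le\mathcal M_Pg$ a.e., lower semicontinuity (Lemma~\ref{lem:lsc}), and Fatou's lemma. The only difference is that the paper gets $|g|\le\mathcal M_Pg$ a.e.\ by citing the non-tangential boundary convergence in \cite[Corollary~5.4]{ADH}, whereas you sketch it from maximal control plus approximation, or alternatively from the atomic embedding $H^1_{(1,2)}\hookrightarrow L^1$; either route suffices.
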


\begin{proof}
For $h\in\mathcal D_0$, \eqref{eq:sdom}, \eqref{eq:l1}, \eqref{eq:n1}, and \eqref{eq:ap} imply
\begin{align*}
    \|\mathcal S_Ph\|_{L^1(d\omega)}  +\|S_{P,\mathrm{euc}}h\|_{L^1(d\omega)}
    &\le2\|\mathcal S_Ph\|_{L^1(d\omega)}   \le C\|\mathcal N_P^\beta h\|_{L^1(d\omega)}\\
    &\le C|G|\|\mathcal N_{P,\mathrm{euc}}^\beta h\|_{L^1(d\omega)}\\
    &\le C|G|(\beta+1)^{\mathbf N}\|h\|_{H^1_{\mathrm{max},P}}   \le C\|h\|_{H^1_{\mathrm{max},P}}.
\end{align*}
Here the last constant is structural because $\beta$ is fixed.

Let $f\in H^1_{\mathrm{max},P}$.  By Lemma~\ref{lem:dense}, choose $f_n\in\mathcal D_0$ such that
$$
        \|f_n-f\|_{H^1_{\mathrm{max},P}}\longrightarrow0.
$$
By the non-tangential boundary convergence in \cite[Corollary~5.4]{ADH}, we have
$$
        |h(x)|\le\mathcal M_Ph(x)
        \qquad\text{for almost every $x$ and every $h\in H^1_{\mathrm{max},P}$}.
$$
Applying the boundary estimate to $h=f_n-f$,
$$
        \|f_n-f\|_{L^1(d\omega)}  \le\|\mathcal M_P(f_n-f)\|_{L^1(d\omega)}
        =\|f_n-f\|_{H^1_{\mathrm{max},P}}    \longrightarrow0.
$$
Lemma~\ref{lem:lsc} implies, pointwise,
\begin{align*}
    \mathcal S_Pf(x)+S_{P,\mathrm{euc}}f(x)  &\le\liminf_{n\to\infty}\mathcal S_Pf_n(x)
        +\liminf_{n\to\infty}S_{P,\mathrm{euc}}f_n(x)\\
    &\le\liminf_{n\to\infty}  \bigl(\mathcal S_Pf_n(x)+S_{P,\mathrm{euc}}f_n(x)\bigr).
\end{align*}
Fatou's lemma and the smooth estimate now give
\begin{align*}
    \|\mathcal S_Pf\|_{L^1(d\omega)}   +\|S_{P,\mathrm{euc}}f\|_{L^1(d\omega)}
    \le\,&\int_{\mathbb R^N}\liminf_{n\to\infty}
     \bigl(\mathcal S_Pf_n(x)+S_{P,\mathrm{euc}}f_n(x)\bigr)\,d\omega(x)\\
    \le\,&\liminf_{n\to\infty}  \bigl(\|\mathcal S_Pf_n\|_{L^1(d\omega)}
     +\|S_{P,\mathrm{euc}}f_n\|_{L^1(d\omega)}\bigr)\\
     \le\,& C\lim_{n\to\infty}\|f_n\|_{H^1_{\mathrm{max},P}}   =C\|f\|_{H^1_{\mathrm{max},P}}.
\end{align*}
\end{proof}

Theorem~\ref{thm:h1} supplies the maximal-to-area bound.  The converse norm estimate is recalled next.

\begin{corollary}\label{cor:h1}
For $f\in L^1(d\omega)$, the following conditions are equivalent:
\begin{enumerate}[(i)]
\item $f\in H^1_{\mathrm{max},P}$;
\item $S_{P,\mathrm{euc}}f\in L^1(d\omega)$;
\item $\mathcal S_Pf\in L^1(d\omega)$.
\end{enumerate}
Whenever these conditions hold,
$$
        \|f\|_{H^1_{\mathrm{max},P}}   \simeq\|S_{P,\mathrm{euc}}f\|_{L^1(d\omega)}
        \simeq\|\mathcal S_Pf\|_{L^1(d\omega)}.
$$
\end{corollary}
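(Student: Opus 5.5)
The plan is to close the cycle of implications (i)$\Rightarrow$(iii)$\Rightarrow$(ii)$\Rightarrow$(i), with norm bounds along each arrow.

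\emph{(i)$\Rightarrow$(iii).} Theorem~\ref{thm:h1} gives this directly, together with
\[
\|\mathcal S_Pf\|_{L^1(d\omega)}+\|S_{P,\mathrm{euc}}f\|_{L^1(d\omega)}\le C\|f\|_{H^1_{\mathrm{max},P}} .
\]

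\emph{(iii)$\Rightarrow$(ii).} This is the pointwise domination \eqref{eq:sdom}. It is valid for $f\in L^1(d\omega)$ because $P_tf$ is smooth and both functionals are defined in $[0,\infty]$. Hence $\|S_{P,\mathrm{euc}}f\|_{L^1}\le\|\mathcal S_Pf\|_{L^1}$.

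\emph{(ii)$\Rightarrow$(i).} This is the substantive direction, and I would import it from \cite{ADH}. The first step is the pointwise bound $S_Qf\le S_{P,\mathrm{euc}}f$, where $Q_t=t\sqrt{-\Delta_\kappa}P_t=-t\partial_tP_t$. This gives
\[
|Q_tf(y)|^2=t^2|\partial_tP_tf(y)|^2\le t^2\,\Gamma_{\kappa,t}(P_tf)(y,t).
\]
Integrating over the Euclidean cone $\|x-y\|<t$ against $dt\,d\omega(y)/(t\,V(x,t))$ (the measure used for $S_Q$ in \cite{ADH}) gives the $S_{P,\mathrm{euc}}$ integrand $\Gamma_{\kappa,t}\,t\,d\omega\,dt/V(x,t)$. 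One point needs care: the normalization of $S_Q$ in \cite{ADH} may use $V(x,t)$ or $\omega(B(y,t))$, and may use aperture $1$ or some other aperture. If it uses $\omega(B(y,t))$, I would insert $V(x,t)\simeq V(y,t)$ for $\|x-y\|<t$. If the aperture differs, I would use the standard change-of-aperture lemma for conical square functions on spaces of homogeneous type, which costs a constant. The domination then holds up to a structural constant.

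Next, the semigroup square-function characterization in \cite{ADH}: for $f\in L^1(d\omega)$, $S_Qf\in L^1$ implies $f\in H^1_{\mathrm{max},P}$ with $\|f\|_{H^1_{\mathrm{max},P}}\lesssim\|S_Qf\|_{L^1}$. This gives $\|f\|_{H^1}\lesssim\|S_{P,\mathrm{euc}}f\|_{L^1}$.

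Chaining the three bounds yields
\[
\|f\|_{H^1}\lesssim\|S_{P,\mathrm{euc}}f\|_{L^1}\le\|\mathcal S_Pf\|_{L^1}\lesssim\|f\|_{H^1}.
\]

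The main obstacle is matching the exact form of the cited square-function theorem: its definition of $S_Q$, its hypothesis $f\in L^1(d\omega)$, and whether it asserts the implication $S_Qf\in L^1\Rightarrow f\in H^1$ rather than only a norm equivalence on $H^1$. I expect \cite{ADH} to state that $f\in L^1$ belongs to $H^1$ if and only if $S_Qf\in L^1$. If it only gives norm equivalence, I would fall back on its atomic/tent-space argument.
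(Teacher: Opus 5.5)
Your proposal is correct and is essentially the paper's argument: the paper closes the same cycle (i)$\Rightarrow$(iii)$\Rightarrow$(ii)$\Rightarrow S_Qf\in L^1(d\omega)\Rightarrow$(i) using Theorem~\ref{thm:h1}, \eqref{eq:sdom}, the pointwise bound $S_Qf\le S_{P,\mathrm{euc}}f$, and the characterization \cite[Theorem~2.3]{ADH}; your normalization worry does not arise, since $S_Q$ as in \cite[(2.9)]{ADH} already uses aperture one and $V(x,t)$. The one step the paper adds, and you should include too, is justifying $Q_tf=-t\partial_tP_tf$ for every $f\in L^1(d\omega)$, not only for $f\in L^1\cap L^2$: $Q_t$ as extended in \cite{ADH} and $-t\partial_tP_t$ are both bounded on $L^1(d\omega)$ (the latter by the kernel bound and $P_t1=1$), so they agree on all of $L^1(d\omega)$ because they agree on the dense subspace $L^1\cap L^2$.
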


\begin{proof}
For $f\in L^1(d\omega)\cap L^2(d\omega)$, the spectral calculus gives
$$
        Q_tf:=t\sqrt{-\Delta_\kappa}\,P_tf=-t\partial_tP_tf.
$$
For each $t>0$, \cite[the paragraph preceding (2.9)]{ADH} extends $Q_t$ boundedly to $L^1(d\omega)$.
Independently, \cite[Proposition~5.1(c)]{ADH}, symmetry, and $P_t1=1$ imply
$$
    \sup_{z\in\mathbb R^N}\int_{\mathbb R^N}|t\partial_tp_t(y,z)|\,d\omega(y)
    \le C\sup_{z\in\mathbb R^N}\int_{\mathbb R^N}p_t(y,z)\,d\omega(y)  =C.
$$
Thus $-t\partial_tP_t$ is bounded on $L^1(d\omega)$.  The two bounded operators agree on the dense subspace
$L^1(d\omega)\cap L^2(d\omega)$ and therefore on all of $L^1(d\omega)$.  We henceforth use the jointly measurable
representative
$$
        Q_tf(y)=-t\partial_tP_tf(y),   \qquad (y,t)\in\mathbb R^N\times(0,\infty).
$$
Following \cite[(2.9)]{ADH}, define
$$
        S_Qf(x)=\left(  \int_0^\infty\int_{\|x-y\|<t}   |Q_tf(y)|^2\frac{d\omega(y)\,dt}{tV(x,t)}  \right)^{1/2}.
$$
Since the vertical component is part of $\Gamma_{\kappa,t}$, we have
\begin{align}\label{eq:sq}
    S_Qf(x)  &=\left( \int_0^\infty\int_{\|x-y\|<t}   |\partial_tP_tf(y)|^2   \frac{t\,d\omega(y)\,dt}{V(x,t)} \right)^{1/2}\notag\\
    &\le\left(  \int_0^\infty\int_{\|x-y\|<t} \Gamma_{\kappa,t}(P_tf)(y,t) \frac{t\,d\omega(y)\,dt}{V(x,t)}  \right)^{1/2}\notag\\
    &=S_{P,\mathrm{euc}}f(x)   \le\mathcal S_Pf(x).
\end{align}

By \cite[Theorem~2.1 and (2.7)]{ADH}, $H^1_{\mathrm{max},P}$ is the semigroup Hardy space used there;
\cite[Theorem~2.3]{ADH} characterizes it by $S_Q$.  Hence
$$
        f\in H^1_{\mathrm{max},P}    \quad\Longleftrightarrow\quad   S_Qf\in L^1(d\omega).
$$
Moreover,
$$
        \|f\|_{H^1_{\mathrm{max},P}}     \simeq\|S_Qf\|_{L^1(d\omega)}.
$$
Together with \eqref{eq:sq}, Theorem~\ref{thm:h1} now establishes
$$
        \mathrm{(i)}   \Longrightarrow\mathrm{(iii)}
        \Longrightarrow\mathrm{(ii)}    \Longrightarrow S_Qf\in L^1(d\omega)       \Longrightarrow\mathrm{(i)}.
$$
Under these equivalent conditions, we have
$$
        \|f\|_{H^1_{\mathrm{max},P}}     \lesssim\|S_Qf\|_{L^1(d\omega)}
        \le\|S_{P,\mathrm{euc}}f\|_{L^1(d\omega)}   \le\|\mathcal S_Pf\|_{L^1(d\omega)}
        \lesssim\|f\|_{H^1_{\mathrm{max},P}}.
$$
\end{proof}

\subsection{The \texorpdfstring{$L^p$}{Lp} estimate and scope}

Integrating the distribution estimate yields the full range $0<p<2$.

\begin{corollary}
Let $\beta>1$ be the aperture fixed in Theorem~\ref{thm:main}, and let $0<p<2$.
Then there exists $C_p>0$ such that, for every $f\in C_c^\infty(\mathbb R^N)$,
\begin{equation}\label{eq:lp}
        \|\mathcal S_Pf\|_{L^p(d\omega)}     \le C_p\|\mathcal N_P^\beta f\|_{L^p(d\omega)}.
\end{equation}
For $F=Uf$, the equivalent chamber estimate is
$$
        \|\mathbb S_P(Uf)\|_{L^p(\calC,d\omega)}     \le C_p\|\mathbb N_P^\beta(Uf)\|_{L^p(\calC,d\omega)}.
$$
\end{corollary}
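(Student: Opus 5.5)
The plan is to integrate the layer-cake form \eqref{eq:layer} of Theorem~\ref{thm:main} against $p\lambda^{p-1}\,d\lambda$, exactly as in the proof of \eqref{eq:l1}, and then to transfer the result to the chamber through the identities established in the proof of Theorem~\ref{thm:ch}. Write $F:=\mathcal N_P^\beta f$. If $\|F\|_{L^p(d\omega)}=\infty$ there is nothing to prove, so assume it is finite. The layer-cake formula gives
\begin{align*}
\|\mathcal S_Pf\|_{L^p(d\omega)}^p=\int_0^\infty p\lambda^{p-1}\omega\{\mathcal S_Pf>\lambda\}\,d\lambda .
\end{align*}
Inserting \eqref{eq:layer} produces two terms. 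The first is $C\int_0^\infty p\lambda^{p-1}\omega\{F>\lambda\}\,d\lambda=C\|F\|_p^p$. The second is
\begin{align*}
Cp\int_0^\infty\lambda^{p-3}\int_0^\lambda s\,\omega\{F>s\}\,ds\,d\lambda .
\end{align*}

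For the second term I apply Tonelli's theorem to exchange the order of integration. This reduces it to
\begin{align*}
Cp\int_0^\infty s\,\omega\{F>s\}\left(\int_s^\infty\lambda^{p-3}\,d\lambda\right)ds .
\end{align*}
The inner integral equals $s^{p-2}/(2-p)$, and it is finite precisely because $p<2$. The term therefore becomes
\begin{align*}
\frac{C}{2-p}\int_0^\infty p\,s^{p-1}\omega\{F>s\}\,ds=\frac{C}{2-p}\|F\|_p^p .
\end{align*}
Adding the two terms gives \eqref{eq:lp} with $C_p=\bigl(C(1+(2-p)^{-1})\bigr)^{1/p}$. The only delicate point is this convergence at $\lambda=\infty$, which is exactly what forces the restriction $p<2$. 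All integrands are non-negative, so values in $[0,\infty]$ cause no difficulty.

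For the chamber version I reuse the facts proved for Theorem~\ref{thm:ch}. First, $\mathbb N_P^\beta(Uf)(x)=\mathcal N_P^\beta f(x)$ for $x\in\calC$. Second, both full-space functionals are $G$-invariant, so for non-negative $G$-invariant $\Phi$ the chamber decomposition gives $\int_{\mathbb R^N}\Phi\,d\omega=m\int_\calC\Phi\,d\omega$. Consequently $\|\mathbb N_P^\beta(Uf)\|_{L^p(\calC)}^p=m^{-1}\|\mathcal N_P^\beta f\|_p^p$. By \eqref{eq:scomp} with $\rho=1$, we have $\mathbb S_P(Uf)\le m^{1/2}\mathcal S_Pf$ on $\calC$, hence
\begin{align*}
\|\mathbb S_P(Uf)\|_{L^p(\calC)}^p\le m^{p/2}m^{-1}\|\mathcal S_Pf\|_p^p .
\end{align*}
Combining these two bounds with \eqref{eq:lp} yields the chamber estimate, with $C_p$ enlarged by the structural factor $m^{1/2}$. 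The reverse inequality in \eqref{eq:scomp} shows the two $L^p$ estimates are equivalent.
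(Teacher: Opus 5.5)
Your proposal is correct and follows the paper's proof essentially step for step. You integrate \eqref{eq:layer} against $p\lambda^{p-1}\,d\lambda$ and apply Tonelli, with $p<2$ ensuring that $\int_s^\infty\lambda^{p-3}\,d\lambda$ is finite, and you then transfer to the chamber through the maximal identity, $G$-invariance, and \eqref{eq:scomp}, obtaining the same structural factor $|G|^{1/2}$.
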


\begin{proof}
Let $F=\mathcal N_P^\beta f$.  We may assume that $F\in L^p(d\omega)$.
Since all integrands below are non-negative, the layer-cake formula, \eqref{eq:layer}, and Tonelli's theorem give
\begin{align*}
    \|\mathcal S_Pf\|_{L^p(d\omega)}^p   &=p\int_0^\infty\lambda^{p-1}   \omega\{\mathcal S_Pf>\lambda\}\,d\lambda\\
    &\le Cp\int_0^\infty\lambda^{p-1}\omega\{F>\lambda\}\,d\lambda   +Cp\int_0^\infty\lambda^{p-3}
         \int_0^\lambda s\,\omega\{F>s\}\,ds\,d\lambda\\
    &=C\|F\|_{L^p(d\omega)}^p    +Cp\int_0^\infty s\,\omega\{F>s\}    \int_s^\infty\lambda^{p-3}\,d\lambda\,ds\\
    &=C\|F\|_{L^p(d\omega)}^p  +\frac{Cp}{2-p}\int_0^\infty s^{p-1}\omega\{F>s\}\,ds\\
    &=C\left(1+\frac1{2-p}\right)\|F\|_{L^p(d\omega)}^p.
\end{align*}
Taking the $p$-th root establishes \eqref{eq:lp}.

Finally, the maximal-function identity in the proof of Theorem~\ref{thm:ch} and the $G$-invariance of the
full-space maximal function imply
$$
    \|\mathcal N_P^\beta f\|_{L^p(d\omega)}   =|G|^{1/p}\|\mathbb N_P^\beta(Uf)\|_{L^p(\calC,d\omega)}.
$$
Separately, \eqref{eq:scomp} and the $G$-invariance of the full-space square function imply
$$
    |G|^{1/p-1/2}\|\mathbb S_P(Uf)\|_{L^p(\calC,d\omega)}
    \le\|\mathcal S_Pf\|_{L^p(d\omega)}
     \le|G|^{1/p}\|\mathbb S_P(Uf)\|_{L^p(\calC,d\omega)}.
$$
Therefore \eqref{eq:lp} implies
\begin{align*}
    \|\mathbb S_P(Uf)\|_{L^p(\calC,d\omega)}  &\le|G|^{1/2-1/p}\|\mathcal S_Pf\|_{L^p(d\omega)}\\
    &\le C_p|G|^{1/2-1/p}\|\mathcal N_P^\beta f\|_{L^p(d\omega)}\\
    &=C_p|G|^{1/2}\|\mathbb N_P^\beta(Uf)\|_{L^p(\calC,d\omega)}.
\end{align*}
Conversely, the chamber estimate and the same comparisons show that
\begin{align*}
    \|\mathcal S_Pf\|_{L^p(d\omega)}  &\le|G|^{1/p}\|\mathbb S_P(Uf)\|_{L^p(\calC,d\omega)}\\
    &\le C_p|G|^{1/p}\|\mathbb N_P^\beta(Uf)\|_{L^p(\calC,d\omega)}\\
    &=C_p\|\mathcal N_P^\beta f\|_{L^p(d\omega)}.
\end{align*}
These two norm comparisons establish the equivalence for globally smooth lifts.
\end{proof}

The intrinsic energy contains the vertical derivative, the horizontal gradient, and the reflection-difference term.
The good-$\lambda$ estimate controls all three components.  The converse in
Corollary~\ref{cor:h1} instead follows from the known $Q_t$-square-function characterization and
\eqref{eq:sq}.  Accordingly, the converse is only a norm implication; no reverse good-$\lambda$ inequality is
asserted.

\raggedbottom
%
%

\bigskip
\noindent\textbf{Acknowledgements:}
Yanchang Han is supported by the National Natural Science Foundation of China (Grant No.
12471097) and the Guangdong Province Natural Science Foundation (Grant No.
2024A1515013107).
Ji Li is supported by the Australian Research Council (DP 220100285 and DP 260100485).
Liangchuan Wu is supported by the National Natural Science Foundation of China (Grant No. 12201002).

\end{document}